\documentclass[12pt]{article}

\usepackage{amssymb,amsmath,amsthm,mathrsfs,cite,bm}
\usepackage{color} 
\usepackage{verbatim}
\usepackage{soul}
\usepackage{multirow}
\usepackage{caption}
\usepackage{graphics}
\usepackage{threeparttable}
\usepackage{pifont}
\usepackage{float}
\usepackage{cases}
\RequirePackage[colorlinks,citecolor=blue]{hyperref}

\setlength{\textwidth}{6.0in} 
\setlength{\oddsidemargin}{0.0in} 
\setlength{\evensidemargin}{-0.0in}
\setlength{\headheight}{0in}
\setlength{\headsep}{0in}
\setlength{\topmargin}{-0in}
\setlength{\textheight}{8.0in}
\setlength{\footskip}{0.5in}

\bibliographystyle{plain}
\newtheorem{theorem}{Theorem}
\newtheorem{lemma}{Lemma}

\newtheorem{proposition}{Proposition}
\newtheorem{example}{Example}


\setlength\tabcolsep{3pt}

\definecolor{gray}{gray}{0.55}
\newcommand\rev{\color{blue}}

\begin{document}

	\begin{center}
		{\Large {\bf A Non-convex Optimization Approach of Searching Algebraic Degree Phase-type Representations for General Phase-type Distributions}}\\[0.1in]
		Yujie Liu\footnote{Department of Industrial Systems Engineering \& Management, National University of Singapore, 117576, Singapore; yj-liu@nus.edu.sg} $\bullet$
		Dacheng Yao\footnote{Academy of Mathematics and Systems Science, Chinese Academy of Sciences, Beijing, 100190, China; School of Mathematical Sciences, University of Chinese Academy of Sciences, Beijing, 100049, China; dachengyao@amss.ac.cn} $\bullet$
		Hanqin Zhang\footnote{Department of Analytics \& Operations, National University of Singapore, 119245, Singapore; bizzhq@nus.edu.sg}
	\end{center}
	
	\noindent
	{\bf Abstract:}
	For a continuous-time phase-type distribution, starting with its Laplace-Stieltjes transform,
	we obtain a necessary and sufficient condition for its minimal phase-type representation to have the same order as the algebraic degree of
	the Laplace-Stieltjes transform. To facilitate finding this minimal representation, we transform this condition equivalently into a quadratic nonconvex optimization problem, which can be effectively addressed using an alternating minimization algorithm. The algorithm convergence is also proved. Moreover, the method we develop for the continuous-time phase-type distributions can be directly used to the discrete-time phase-type distributions
	after establishing an equivalence between the minimal representation problems for continuous-time and discrete-times phase-type distributions.
	
	\
	
	\noindent
	{\bf Key words:} Markov chain, phase-type distribution, rational function,  nonconvex optimization
	
	\
	
	\noindent
	{\bf AMS subject classifications:} 65F18, 65F15, 90C26, 60J10

	\section{Introduction}
	
	A phase-type (PH) distribution is the probability distribution of the absorption time of a finite-state Markov chain
	with all the states being transient except only one absorbing state. A PH distribution is called a continuous-time (discrete-time) PH distribution  if its corresponding Markov chain is continuous-time (discrete-time).  PH distributions have two advantages: partially keeping the memoryless property uniquely possessed by the exponential distribution, and accurately approximating any probability distributions given by nonnegative random variables under the topology of the weak convergence, see  Latouche and Ramaswami
	\cite{latouche1999}, and Neuts \cite{neuts1981matrix}. Because of these two advantages, PH distributions have been widely used in stochastic modeling, and make the system performance measures to become analytically and numerically tractable. For instance, they are used in queueing theory to model the customer interarrival/service times, see Bladt \cite{bladt1996}, and  Dai et al. \cite{dai2010}; they are implemented in reliability, risk management and computer communication to characterize the component lifetime (Latouche and Taylor \cite{latouche2002}), ruin probability (Asmussen and Albrecher \cite{asmussen2010ruin}), and data package transmission time (Alfa and Li \cite{alfa2002}), respectively; and they are also formulated to be product replacement times in stochastic inventory management, see He \cite{he2014fundamentals}, and Song and Zipkin \cite{song1993}.
	
	It is well-known that the absorbing times determined by different Markov chains may have the same distribution, this implies that for a given PH distribution, there exist multiple Markov chains such that their absorbing time distributions are the same as the given PH distribution,
	see Neuts \cite{neuts1981matrix}, and O'Cinneide \cite{o1989non}. Each such Markov chain is called a {\it PH representation} of this given PH distribution, and its states are called phases. Usually, the effectiveness and efficiency of the usage of the PH distributions in stochastic models depend on the number of the phases of their representations. A smaller number of phases lead to shorter computational time and higher accuracy in evaluating system performance measures. Motivated by this reason, a natural and important question we would ask is that for a PH distribution, how to determine its one representation with the smallest number of the phases among all its equivalent PH representations, that is, {\it the minimal PH representation problem}. The minimum number of the phases needed to represent it is called its {\it order}.
	
	In the past few decades,  a number of studies on the minimal PH representation problem have been carried out. Using the invariant polytope approach introduced by Dmitriev and Dynkin \cite{dmitriev1945,dmitriev1946}, for a continuous-time PH distribution, O'Cinneide \cite{o1991phase} gives a lower bound on its order by considering its Laplace-Stieltjes transform (LST). We know that
	LST of each PH distribution is a rational function. Starting with its LST for a PH distribution, Commault and Chemla \cite{commault1996invariant} prove that when its LST can be expressed as a ratio of two coprime polynomials, if the numerator polynomial turns out to be constant or linear (the degree of the numerator polynomial is zero or one), and all the roots of the denominator polynomial (the poles of its LTS) are real, then its order is equal to the degree of the denominator polynomial. In the theory of PH distributions, for a PH distribution, when its LST is written as the ratio of two coprime polynomials, the degree of the denominator polynomial is called its {\it  algebraic degree}, and its algebraic degree is always smaller than or equal to the order of its PH representation,  see Fackrell et al. \cite{fackrell2010}, and O'Cinneide \cite{o1991phase}.
	Further, they demonstrate that even when the degree of the numerator polynomial is two while keeping all the roots of the denominator polynomial to be real, or the roots of the denominator polynomial are not real while keeping the numerator polynomial to be constant/linear, the minimal PH representation problem becomes much more complicated, its order can get dramatically larger compared with its algebraic degree.
	
	With considering the difficulty of finding the minimal PH representation, people have to lower down this goal and switch to exploring some properties on PH distributions, which can help us to simplify some analyses on the Markov models formulated by PH distributions.
	Aldous and Shepp \cite{david1987least} establish the lower bound of the coefficient of the variation of PH distributions by studying the quadratic variation of the martingale formed by the absorbing time of the corresponding Markov chain. Using the majorization order introduced by  Marshall and  Olkin \cite{marshall1979inequalities},  O'Cinneide \cite{omajor1991} proves that when fixing mean, the order-$n$ PH distribution always majorizes the order-$n$ Erlang distribution. At the same time, people have been trying to find other PH representations with simpler structure such as triangular, Coxian, and unicyclic types for PH distributions. O'Cinneide \cite{o1991phase} obtains that any PH distribution whose LST
	has only real poles has a triangular PH representation. He and Zhang \cite{he2005note,he2006spectral,he2006ph,he2008algorithm}
	introduce spectral polynomial algorithms for computing ordered Coxian
	representations for PH-distributions with only real poles.  Telek and Horv\'ath  \cite{telek2007} propose the moments matching method to compute the minimal PH representation. M\'esz\'aros et al. \cite{telek2013}, and  Telek and Horv\'ath  \cite{telek2007}	propose the moments matching ({\bf MM}) method to compute the minimal PH representation.
	
	It  is worth mentioning that the minimal representation problem can be viewed as a minimal positive realization problem in the sense that LST is
	considered as a transfer function in a linear system, see Commault \cite{commault2003phase}. The minimal positive realization problem has been received significant attention in the control society, but most of them focus on special transfer functions, see  Benvenuti \cite{Benvenuti2022}, Liu et al. \cite{LiuYaoZhang2023}, and  reference therein.
	
	In sum, the current state of affairs is far from solving the minimal PH representation problem. This is also evidenced from the survey papers by
	Benvenuti \cite{Benvenuti2022},  Commault and Mocanu \cite{commault2003phase}, and O'Cinneide \cite{o1993triangular}.
	To push the current status toward this goal,
	in this paper, starting with the LST, we establish an if and only if (iff) condition for any given PH distribution to have a minimal representation with the same order as its algebraic degree. The iff condition is characterized by the zero-optimal value for a non-convex optimization problem. To get the iff condition, we first show the uniqueness for the Jordan form generated by the poles from the LST, and develop a unique expression of the LST in terms of this Jordan form.
	The unique expression enables us to rephrase the iff condition into the non-convex optimization problem. To effectively solve this optimization problem, we develop an alternating minimization ({\bf AM}) algorithm to compute the minimal PH representation by iteratively solving two quadratic optimization problems, and prove the convergence of the algorithm. In order to establish the algorithm convergence, we use the solution representation theorem for the quadratic optimization problem to create a single-variable quadratic function characterized by the inner product of the optimal solution. The solution boundedness is given through exploring this quadratic function, and the algorithm convergence is then obtained by this boundedness.  Further, we establish the equivalence between the minimal representation problems for continuous-time  and discrete-time PH distributions. This equivalence allows us to transform the problem of finding a minimal representation for a discrete-time PH distribution into the continuous-time case. Thus the method developed in this paper provides a unified approach for both continuous-time and discrete-time minimal PH representation problems.

	
	The rest of this paper is organized as follows. In Section \ref{sec:problem}, we formulate our minimal representation problems based on LSTs. Section \ref{cts_case} focuses on the continuous-time minimal PH representation problem.
	In Section \ref{dct_case}, we establish the equivalence between the minimal representation problems for continuous-time and discrete-time PH distributions, and the equivalence enables us to directly adopt the method established in Section \ref{cts_case} to obtain the minimal PH representation for the discrete-time PH distributions.    
	In Section \ref{sec:numerical}, we present a numerical study using the developed algorithm. Finally, we conclude the paper in Section \ref{sec:conclude}.
	
	Throughout the rest of the paper, ${\cal C}$ represents the complex field and $\imath$ is the imaginary unit.
	For $z\in {\cal C}$, its real part, imaginary part and modulus are denoted by ${\sf Re}(z)$, ${\sf Im}(z)$, and $|z|$, respectively. ${\cal R}^n$ denotes $n$-dimensional real space and ${\cal R}_+^n$ denotes the set of $n$-dimensional nonnegative vectors. For $x,y\in {\cal R}^n$, $\langle x,y\rangle$ denote their inner product. ${\cal R}^{m\times n}$ is the set of all $m\times n$ real matrices, and ${\cal R}_+^{m\times n}$ is the set of all nonnegative $m\times n$ real matrices. The transpose of a vector or a matrix is denoted by appending a superscript ``$\top$".
	All the vectors in this paper are understood to be row vectors.
	For $A=(a_{ij})_{n\times n}\in {\cal R}^{n\times n}$, its Frobenius norm is represented by $\|A\|_F$. ${\sf adj}(A)$ denotes its adjugate matrix, and ${\sf det}(A)$ denotes its determinant.
	Let ${\bf 1}$ be the vector with each component being one, ${\bf 0}$ the vector or matrix whose whole components are zero, and $I$ the identity matrix. Their dimensions would be clear from the context in which they are located.
	For $A_i\in {\cal R}^{n_i\times n_i}$ with $1\leq i\leq k$,
	\begin{eqnarray*}
		{\sf diag}\Big(A_1,\cdots,A_k\Big) =\begin{bmatrix}
			A_1& & \\
			& \ddots& \\
			& & A_k
		\end{bmatrix}.
	\end{eqnarray*}

	\section{ Minimal PH Representation Problem: Continuous-time Case}
	\label{sec:problem}
	
	For the continuous-time case, in view of the work by Asmussen and Bladt \cite{asmussen-bladt-1997}, and O'Cinneide \cite{o1990characterization},
	for any probability distribution $F(\cdot)$ on $[0, \infty)$ with continuous positive density on $(0, \infty)$, its LST is given by the rational function
	\begin{align}\label{prob1}
		{\cal L}(s)=\int^\infty_{0-}e^{-st}{\sf d} F(t)= \frac{p(s)}{q(s)}=\frac{p_{n-1}s^{n-1}+p_{n-2}s^{n-2}+\cdots+p_0}{s^n+q_{n-1}s^{n-1}+\cdots+q_0}, \
		{\sf Re}(s)\geq 0.
	\end{align}
	If ${\cal L}(s)$ satisfies that
	\begin{itemize}
		\item [({\bf A1})] its coefficients are real numbers;
		\item[({\bf A2})] $p(s)$ and $q(s)$ are coprime;
		\item [({\bf A3})] ${\cal L}(0)=1$ and ${\cal L}(s)$ has a unique pole of maximal real part,
	\end{itemize}
	then $F(\cdot)$ is a PH distribution characterized by the absorbing time of the continuous-time Markov chain on the states ${\cal S}=\{1,\ldots, m, m+1\}$ with $m\geq n$.
	Specifically, the Markov chain's first $m$ states are all transient while the last state is absorbing, the initial state distribution is $\mathbf{\alpha}=(\alpha_1,\ldots,\alpha_m)$ with $\alpha{\bf 1}^\top=1$, and the infinitesimal generator is
	\[  \begin{bmatrix}
		A&-A {\bf 1}^\top\\
		{\bf 0}& 0
	\end{bmatrix},
	\]
	where $m\times m$ matrix $A=(a_{ij})_{m\times m}$ satisfies $a_{ii}<0$ for  $1\leq i\leq m$, and $a_{ij}\geq 0$ for $i\neq j$.
	In particular, $F(t)=1-\alpha e^{At} {\bf 1}^\top$ for $t\geq 0$.
	Then ${\cal L}(\cdot)$ can be written as
	\begin{align}\label{equ:lapla}
		{\cal L}(s)=\int_{0}^{\infty}-e^{-st}\alpha Ae^{At}{\bf 1}^\top \ {\sf d} t=-\alpha A(sI-A)^{-1}{\bf 1}^\top=\frac{-\alpha A\cdot {\sf adj}(sI-A){\bf 1}^\top}{{\sf det}(sI-A)}
	\end{align}
	for ${\sf Re}(s)\geq 0$.
	We call such $(\alpha, A)$ an {\it $m$-order continuous-time PH representation} of $F(\cdot)$ with LST ${\cal L}(s)$, and
	$n$ {\it the algebraic degree} of the PH distribution $F(\cdot)$. In the following, for the continuous-time case,
	we start with LST ${\cal L}(s)$ given by (\ref{prob1}) satisfying ({\bf A1})-({\bf A3}) to identify {\it when
		there exists such a Markov chain with $(n+1)$ states, that is, when there exists a PH representation with the same order as its algebraic degree. Further, if it exists, we develop an algorithm to determine the initial distribution $\alpha=(\alpha_1,\ldots,\alpha_n)$ and the infinitesimal subgenerator
		$A=(a_{ij})_{n\times n}$.}
	
	Before developing our analysis on the minimal representation, we comment on ${\cal L}(s)$ given by (\ref{prob1}). ${\cal L}(s)$ in (\ref{prob1}) assumes that there is no probability mass at zero. That is, the Markov chain we identify based on ${\cal L}(s)$ initially starts at the transient states with probability one. This assumption can be dropped by directly adding a constant term (the probability of initially starting with the absorbing state) to ${\cal L}(s)$.
	Our following analysis and results on ${\cal L}(s)$ given by (\ref{prob1})  can be directly adopted to the case with positive probability mass at zero by just setting the initial probability at the absorbing state to be this positive probability mass at zero
	for the Markov chain we have identified. Without losing generality, we only focus on ${\cal L}(s)$ given by (\ref{prob1}) in the rest of the paper.
	
	\section{Alternating Minimization Algorithm}\label{cts_case}	
	Consider the rational function ${\cal L}(s)$ given by (\ref{prob1}) with ({\bf A1})-({\bf A3}) holding.
	Suppose that all distinct roots of $q(s)$ in ${\cal L}(s)$ are
	\begin{align}\label{pole}
		\left\{\lambda_1, \lambda_2,\ldots,\lambda_{\ell}, \mu_{\ell+1}\pm \imath\omega_{\ell+1}, \mu_{\ell+2}\pm \imath\omega_{\ell+2},\ldots, \mu_{k}\pm \imath\omega_{k}  \right\}
	\end{align}
	with $\lambda_i,\mu_j,\omega_j\in {\cal R}$, $\omega_j\neq 0$ for $1\leq i\leq \ell, \ell+1\leq j  \leq k$, and their multiplicities are $n_1,n_2,\ldots,n_k$, respectively. That is,
	\begin{align}\label{equ_q}
		\begin{aligned}
			q(s)&=\prod_{i=1}^{\ell}(s-\lambda_i)^{n_i}\cdot\prod_{j=\ell+1}^{k}\Big((s-\mu_j-\imath \omega_j)(s-\mu_j+\imath \omega_j)\Big)^{n_j}\\
			&=\prod_{i=1}^{\ell}(s-\lambda_i)^{n_i}\cdot\prod_{j=\ell+1}^{k}\Big((s-\mu_j)^2+\omega_j^2\Big)^{n_j}.
		\end{aligned}
	\end{align}
	Let $\lambda_1=\arg\max_{1\leq i\leq \ell}\{\lambda_i\}$. Frorm ({\bf A3}), we have
	\begin{eqnarray}
		\lambda_1>\max_{2\leq i\leq \ell \atop \ell+1\leq j  \leq k}\{\lambda_i,\mu_j\}.\label{A3}
	\end{eqnarray}
	In view of \eqref{equ:lapla},  our problem can be equivalently rephrased as follows:
	\begin{itemize}
		\item[] {\it Finding  $\alpha \in {\cal R}_+^n$ with $\alpha{\bf 1}^\top=1$, and
			nonsingular matrix $A=(a_{ij})_{n\times n}\in {\cal R}^{n\times n}$ with $a_{ii}<0$ for $1\leq i\leq n$, $a_{ij}\geq 0$ for $i\neq j$,
			and $\sum_{j=1}^na_{ij}\leq 0$ for $1\leq i\leq n$
			such that
			\begin{align}
				&\label{equ_Lapl} {\cal L}(s)=\frac{p(s)}{q(s)}=-\alpha A(sI-A)^{-1}{\bf 1}^\top=\frac{-\alpha A\cdot {\sf adj}(sI-A){\bf 1}^\top}{{\sf det}(sI-A)}.
			\end{align}
		}
	\end{itemize}
	
	Recall (\ref{equ_q})
	and note that ${\sf det}(sI-A)$ is a polynomial of $n$ degree. To ensure \eqref{equ_Lapl} holds, a necessary condition is that $q(s)={\sf det}(sI-A)$. That is, the roots (including multiplicity) of $q(s)$ must be the spectrum of $A$. Let $J$ be the Jordan form of $A$. Then there exists a nonsingular matrix $P$ such that
	\begin{align}\label{equ_Jordanform-1}
		A=P^{-1}JP.
	\end{align}
	Thus
	\begin{align}\label{equ_Jordanform}
		-\alpha A(sI-A)^{-1}{\bf 1}^\top=-\alpha P^{-1} J(sI-J)^{-1}P{\bf 1}^\top.
	\end{align}
	
	In the following, based on (\ref{equ_Jordanform-1})-(\ref{equ_Jordanform}), we first explore some properties on the Jordan form $J$ and matrix $P$.
	Using these properties, we transform our problem into the problem on how to identify whether there exists a feasible solution to the set of linear and quadratic equations. Further, with the help of the non-convex optimization theory, the problem to identify a feasible solution is formulated into a non-convex optimization problem.
	
	\subsection{Some Properties and An Alternative Expression}\label{non-convex-form}
	Following the above discussion, for each real root $\lambda_i$ of $q(s)$, let $J^{(i)}(\lambda_i)$ be the corresponding Jordan block for $1\leq i\leq \ell$,  and for each pair of conjugate complex roots $\mu_{j}\pm \imath\omega_{j}$, let $J^{(j)}(\mu_{j},\omega_{j})$ be the corresponding Jordan block for $\ell+1\leq j\leq k$. Then
	\begin{align}\label{prop-1-10}
		J={\sf diag}
		\Big(J^{(1)}(\lambda_1),\cdots,J^{(\ell)}(\lambda_{\ell}),  J^{(\ell+1)}(\mu_{\ell+1},\omega_{\ell+1}),\cdots,J^{(k)}(\mu_{k},\omega_{k})\Big).
	\end{align}
	For any positive integer $m$, let
	\begin{align*}
		{\cal J}_m(\lambda)=\left[
		\begin{array}{cccc}
			\lambda& & & \\
			1 &\ddots & & \\
			&\ddots &\ddots & \\
			& &1 &\lambda
		\end{array}
		\right]_{m\times m};
	\end{align*}
	\begin{align*}
		{\cal J}_m(\mu,\omega)=\left[
		\begin{array}{cccc}
			\Theta(\mu,\omega)& & & \\
			I &\ddots & & \\
			&\ddots &\ddots & \\
			& &I &\Theta(\mu,\omega)
		\end{array}
		\right]_{(2m)\times (2m)} \mbox{with} \ \Theta(\mu,\omega)=\begin{bmatrix}
			\mu&-\omega\\
			\omega& \mu
		\end{bmatrix}.
	\end{align*}
	Due to the multiplicity, the form of $J$ is generally not unique. If we consider $J$ with (\ref{equ_Lapl}) together, however, we are able to prove the uniqueness of the Jordan form $J$ in the following proposition, whose proof is postponed to the Appendix.
	\begin{proposition}\label{prop_Jordan}
		For rational function $ {\cal L}(s)$ given by \eqref{prob1}, suppose that there exists an $n$-order PH representation $(\alpha, A)$ such that \eqref{equ_Lapl} holds. Then each distinct eigenvalue of $A$ has exactly one Jordan block, that is, for $1\leq i\leq \ell$, $J^{(i)}(\lambda_i)={\cal J}_{n_i}(\lambda_i)$, and
		for $\ell+1\leq j\leq k$,  $J^{(j)}(\mu_j,\omega_{j})={\cal J}_{n_j}(\mu_j,\omega_{j})$.
		In other words, the Jordan block has a unique form
		\begin{align}\label{prop-1-1}
			{\sf diag} \Big(
			{\cal J}_{n_1}(\lambda_1),\cdots,{\cal J}_{n_\ell}(\lambda_{\ell}),{\cal J}_{n_{\ell+1}}(\mu_{\ell+1},\omega_{\ell+1}),\cdots,{\cal J}_{n_k}(\mu_{k},\omega_{k})  \Big),
		\end{align}
		which is denoted by ${\cal J}$ in the following analysis.
	\end{proposition}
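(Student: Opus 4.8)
The plan is to show that the representation constraint \eqref{equ_Lapl} forces $A$ to be \emph{nonderogatory}, i.e. its minimal polynomial coincides with its characteristic polynomial; the claimed Jordan structure then follows immediately from standard linear algebra.

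First I would pin down the denominator of ${\cal L}(s)$. By \eqref{equ_Lapl} we have ${\cal L}(s)=\frac{-\alpha A\cdot{\sf adj}(sI-A){\bf 1}^\top}{{\sf det}(sI-A)}$, and ${\sf det}(sI-A)$ is monic of degree $n$. Since ${\cal L}(s)=p(s)/q(s)$ is already in lowest terms with $p,q$ coprime (assumption ({\bf A2})) and $\deg q=n$ by \eqref{prob1}, the reduced denominator $q$ must divide ${\sf det}(sI-A)$; being monic of the same degree, $q(s)={\sf det}(sI-A)$. In particular every $\lambda_i$ is an eigenvalue of $A$ with algebraic multiplicity $n_i$, and each $\mu_j\pm\imath\omega_j$ is an eigenvalue of (the complexification of) $A$ with algebraic multiplicity $n_j$.

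Next comes the key algebraic step. Write $\chi(s)={\sf det}(sI-A)$ for the characteristic polynomial and let $\psi(s)$ be the minimal polynomial of $A$. Using the classical fact that the greatest common divisor of the entries of ${\sf adj}(sI-A)$ equals $\chi(s)/\psi(s)$, we can write $(sI-A)^{-1}=M(s)/\psi(s)$ for a polynomial matrix $M(s)$, and hence ${\cal L}(s)=-\alpha A(sI-A)^{-1}{\bf 1}^\top=r(s)/\psi(s)$ for some polynomial $r(s)$. Comparing this with the reduced form ${\cal L}(s)=p(s)/q(s)$ and invoking coprimality of $p$ and $q$ gives $q(s)\mid\psi(s)$. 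Combining with $\psi(s)\mid\chi(s)=q(s)$, all three monic polynomials coincide: $\psi=q=\chi$. Thus $A$ is nonderogatory, which is equivalent to each complex eigenvalue of $A$ having exactly one Jordan block; translating back to the real Jordan form \eqref{prop-1-10} yields $J^{(i)}(\lambda_i)={\cal J}_{n_i}(\lambda_i)$ for $1\leq i\leq\ell$ and $J^{(j)}(\mu_j,\omega_j)={\cal J}_{n_j}(\mu_j,\omega_j)$ for $\ell+1\leq j\leq k$, which is \eqref{prop-1-1}.

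The only delicate point is the identity ${\cal L}(s)=r(s)/\psi(s)$, which rests on the ${\sf adj}$–minimal-polynomial relation. A completely self-contained alternative is a pole-order argument: writing $A=P^{-1}JP$ and expanding $-\alpha P^{-1}J(sI-J)^{-1}P{\bf 1}^\top$ blockwise along the Jordan blocks, one sees that the order of the pole of ${\cal L}$ at $\lambda_i$ is at most the size of the largest Jordan block attached to $\lambda_i$ (left/right multiplication by constant vectors and by $J$ does not raise pole orders), whereas coprimality of $p$ and $q$ forces this pole order to equal exactly $n_i$; since the sizes of the Jordan blocks for $\lambda_i$ sum to $n_i$, there must be exactly one, of size $n_i$, and similarly for each conjugate pair $\mu_j\pm\imath\omega_j$. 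I would present the minimal-polynomial version as the main line of argument and record this pole-counting version as an alternative, since it avoids quoting the adjugate identity.
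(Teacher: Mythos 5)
Your proposal is correct, but your main line of argument is genuinely different from the paper's. The paper proves the proposition by contradiction directly on the real Jordan form: assuming some $J^{(i)}(\lambda_i)$ (or $J^{(j)}(\mu_j,\omega_j)$) splits into two sub-blocks, it computes $(sI-J)^{-1}$ blockwise via \eqref{inverse-J-1}--\eqref{inverse-J-2} and observes that the maximal order of the factor $(s-\lambda_i)^{-1}$ (resp.\ $((s-\mu_j)^2+\omega_j^2)^{-1}$) appearing in the entries is then $\max\{m_i,n_i-m_i\}<n_i$, so the pole of $-\alpha A(sI-A)^{-1}{\bf 1}^\top$ at $\lambda_i$ has order strictly less than $n_i$, contradicting \eqref{equ_Lapl} and the coprimality in ({\bf A2}); this is essentially your recorded ``alternative'' pole-counting argument, and in fact your phrasing (largest block size must equal $n_i$ while the block sizes sum to $n_i$) handles the case of three or more sub-blocks more cleanly than the paper's ``similarly'' remark. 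Your primary route instead identifies $q(s)={\sf det}(sI-A)$ from coprimality and degree, then invokes the classical identity that the gcd of the entries of ${\sf adj}(sI-A)$ equals $\chi(s)/\psi(s)$ to write ${\cal L}(s)=r(s)/\psi(s)$, concluding $\psi=q=\chi$, i.e.\ $A$ is nonderogatory, which is equivalent to one Jordan block per eigenvalue. This is shorter and yields the slightly stronger structural statement that the minimal polynomial of $A$ equals $q$, at the cost of quoting the adjugate/minimal-polynomial lemma and of a final translation step from the complex Jordan form back to the real canonical form \eqref{prop-1-10}; the paper's computation is longer but fully self-contained and works with the real blocks ${\cal J}_{n_i}(\lambda_i)$, ${\cal J}_{n_j}(\mu_j,\omega_j)$ that are used throughout the rest of the analysis. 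Both arguments rest on the same two pillars (coprimality forces pole order $n_i$, and the representation \eqref{equ_Lapl} caps the pole order by the largest Jordan block), so either would serve; yours is acceptable as written.
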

	
	Next we provide a property of $P$ which helps us to further simplify \eqref{equ_Jordanform}, and whose proof is given in the Appendix.
	\begin{proposition}\label{prop_P}
		For rational function ${\cal L}(s)$ given by \eqref{prob1}, suppose that there exists an $n$-order PH representation $(\alpha,A)$ such that \eqref{equ_Lapl} holds. Then there exists a nonsingular matrix $P$ such that $A=P^{-1}{\cal J}P$ and $P{\bf 1}^\top={\bf 1}^\top$, where
		${\cal J}$ is given by \eqref{prop-1-1} in Proposition {\rm \ref{prop_Jordan}}.
	\end{proposition}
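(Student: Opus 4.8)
The plan is to construct $P$ from the Krylov basis generated by $\mathbf{1}^\top$. By Proposition \ref{prop_Jordan} there is a nonsingular $P_0$ with $A=P_0^{-1}{\cal J}P_0$; in particular $A$ and ${\cal J}$ have the same characteristic polynomial, which is $q(s)$ (this is forced by \eqref{equ_Lapl}, and ${\cal J}$ is assembled from the roots of $q$). Two facts drive everything: (i) $\mathbf{1}^\top$ is a cyclic vector for $A$, i.e.\ $\mathbf{1}^\top, A\mathbf{1}^\top,\ldots,A^{n-1}\mathbf{1}^\top$ are linearly independent; and (ii) $\mathbf{1}^\top$ is a cyclic vector for ${\cal J}$. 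Granting (i)--(ii), I would define $P$ by $P\big(A^i\mathbf{1}^\top\big)={\cal J}^i\mathbf{1}^\top$ for $0\le i\le n-1$ and extend linearly. Such a $P$ is nonsingular because it sends the basis $\{A^i\mathbf{1}^\top\}_{i=0}^{n-1}$ to the basis $\{{\cal J}^i\mathbf{1}^\top\}_{i=0}^{n-1}$; it satisfies $P\mathbf{1}^\top=\mathbf{1}^\top$ (the case $i=0$); and $PA={\cal J}P$ holds because it holds on each basis vector: for $i\le n-2$ it is immediate from the definition, and for $i=n-1$ one writes $A^n\mathbf{1}^\top=-\sum_{j=0}^{n-1}q_jA^j\mathbf{1}^\top$ by the Cayley--Hamilton theorem (using that the characteristic polynomial of $A$ is $q$), applies $P$ term by term, and recognizes $-\sum_{j=0}^{n-1}q_j{\cal J}^j\mathbf{1}^\top={\cal J}^n\mathbf{1}^\top={\cal J}\big({\cal J}^{n-1}\mathbf{1}^\top\big)$ by Cayley--Hamilton for ${\cal J}$. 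Then $A=P^{-1}{\cal J}P$ with $P\mathbf{1}^\top=\mathbf{1}^\top$, as required.

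For (i): suppose $\mathbf{1}^\top$ is not cyclic for $A$, and let $d<n$ be the dimension of the subspace spanned by $\{A^k\mathbf{1}^\top:k\ge 0\}$. Let $r(s)$ be the monic polynomial of least degree with $r(A)\mathbf{1}^\top=\mathbf{0}$; then $\deg r=d$. From the identity $r(s)I-r(A)=(sI-A)\,g(s,A)$, where $g(s,A)$ has degree $d-1$ in $s$ with matrix coefficients that are polynomials in $A$, together with $r(A)\mathbf{1}^\top=\mathbf{0}$, one obtains $(sI-A)^{-1}\mathbf{1}^\top=r(s)^{-1}g(s,A)\mathbf{1}^\top$. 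Hence ${\cal L}(s)=-\alpha A(sI-A)^{-1}\mathbf{1}^\top=\big(-\alpha A\,g(s,A)\mathbf{1}^\top\big)/r(s)$ is a rational function whose denominator $r(s)$ has degree $d<n$, contradicting the fact that the reduced denominator of ${\cal L}(s)$ is $q(s)$, of degree $n$, by \eqref{prob1} and ({\bf A2}). So $d=n$, proving (i). (Equivalently: the order-$n$ realization $(-\alpha A,\,A,\,\mathbf{1}^\top)$ of ${\cal L}(s)$ is minimal, hence controllable.)

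For (ii): by Proposition \ref{prop_Jordan}, ${\cal J}$ is the block-diagonal matrix in \eqref{prop-1-1}, whose diagonal blocks ${\cal J}_{n_i}(\lambda_i)$ and ${\cal J}_{n_j}(\mu_j,\omega_j)$ have minimal polynomials $(s-\lambda_i)^{n_i}$ and $\big((s-\mu_j)^2+\omega_j^2\big)^{n_j}$; these are pairwise coprime because the $\lambda_i$ are distinct reals and the $\mu_j\pm\imath\omega_j$ are distinct (non-real) roots of $q$. For a block-diagonal matrix whose diagonal blocks have pairwise coprime minimal polynomials, a vector is cyclic if and only if its restriction to each diagonal block is cyclic for that block. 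A direct computation shows that a vector is cyclic for a single real Jordan block ${\cal J}_{n_i}(\lambda_i)$ iff its first entry is nonzero, and cyclic for a block ${\cal J}_{n_j}(\mu_j,\omega_j)$ iff its leading two-dimensional sub-block is nonzero (this last point is most transparent after complexification, which splits ${\cal J}_{n_j}(\mu_j,\omega_j)$ into two conjugate complex Jordan blocks). Since every entry of $\mathbf{1}^\top$ equals $1$, all of these conditions are met, so $\mathbf{1}^\top$ is cyclic for ${\cal J}$.

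The main obstacle is (ii): it is the only step that genuinely uses the internal structure of the real Jordan blocks, and the blocks ${\cal J}_{n_j}(\mu_j,\omega_j)$ arising from conjugate-complex poles require a little care (hence the complexification). Step (i) is routine once one recalls that a proper rational function in lowest terms with a degree-$n$ denominator admits no realization of order smaller than $n$, and the passage from (i)--(ii) to $P$ is a standard Krylov-basis argument.
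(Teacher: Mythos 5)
Your proof is correct, but it follows a genuinely different route from the paper's. The paper starts from an arbitrary nonsingular $\widetilde P$ with $A=\widetilde P^{-1}{\cal J}\widetilde P$ (available once Proposition \ref{prop_Jordan} fixes the real Jordan form) and then repairs the normalization by left-multiplying with a nonsingular matrix $F$ lying in the commutant of ${\cal J}$: $F$ is block-diagonal with lower-triangular Toeplitz blocks $F_{n_i}(\lambda_i)$ and analogous $2\times 2$-block Toeplitz blocks $F_{n_j}(\mu_j,\omega_j)$, and its entries are determined recursively from the triangular system $F\widetilde P\mathbf{1}^\top=\mathbf{1}^\top$; solvability rests on showing $\big(\widetilde P(\lambda_i)\mathbf{1}^\top\big)_1\neq 0$ and the nonvanishing of the leading $2$-block in the complex case, which the paper deduces from the fact that $\lambda_i$ (resp.\ $\mu_j\pm\imath\omega_j$) is a pole of ${\cal L}(s)$ of full multiplicity $n_i$ (resp.\ $n_j$). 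You instead prove that $\mathbf{1}^\top$ is a cyclic vector for $A$ (via the degree-$n$ reduced denominator, i.e.\ minimality/controllability of the order-$n$ realization) and for ${\cal J}$ (block by block, using coprimality of the blocks' minimal polynomials and the leading-entry criterion), and then define $P$ on the Krylov basis, with the intertwining $PA={\cal J}P$ closed off by Cayley--Hamilton since both matrices have characteristic polynomial $q$. The two arguments exploit the same underlying fact --- the pole multiplicities force $\mathbf{1}^\top$ to have a nonzero component at the top of every Jordan chain --- but your realization-theoretic construction is shorter and more standard, whereas the paper's (adapted from Telek and Horv\'ath \cite{telek2007}) produces an explicit correction matrix $F$ in the commutant of ${\cal J}$ and stays entirely within real arithmetic; your complexification appears only in the auxiliary cyclicity check for the blocks ${\cal J}_{n_j}(\mu_j,\omega_j)$, and the resulting $P$ is still real, so the conclusion is unaffected.
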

	
	%

	Now we do partial-fraction expansion for ${\cal L}(s)$ according to the factorization of $q(s)$ given by
	(\ref{equ_q}). That is,
	\begin{align*}
		{\cal L}(s)=\sum_{i=1}^\ell \sum_{r=1}^{n_i}\frac{ c_{i,r}}{(s-\lambda_i)^r}+\sum_{j=\ell+1}^k \sum_{r=1}^{n_j} \big(\frac{c_{j,r}}{(s-\mu_j-\imath\omega_j)^r}+\frac{ \overline{c}_{j,r}}{(s-\mu_j+\imath\omega_j)^r}\big),
	\end{align*}
	where $ c_{i,r}\in {\cal R}$ and $ c_{i,n_i}\neq 0$ for $1\leq r\leq n_i$ and $1\leq i\leq\ell$, $ c_{j,r}\in {\cal C}$, $ \overline{c}_{j,r}$ is the conjugate of $ c_{j,r}$, and $ c_{j,n_j}\neq 0$ for $1\leq r\leq n_j$ and $\ell+1\leq j\leq k$. Let $\beta$ be the solution to
	\begin{align}
		&\sum_{i=1}^\ell \sum_{r=1}^{n_i}\frac{ c_{i,r}}{(s-\lambda_i)^r}+\sum_{j=\ell+1}^k \sum_{r=1}^{n_j} \big(\frac{ c_{j,r}}{(s-\mu_j-\imath\omega_j)^r}+\frac{ \overline{c}_{j,r}}{(s-\mu_j+\imath\omega_j)^r}\big) \nonumber\\
		& \ \ \ =-\beta {\cal J}(sI-{\cal J})^{-1}{\bf 1}^\top,\label{equ_beta}
	\end{align}
	where ${\cal J}$ is given by (\ref{prop-1-1}) in Proposition \ref{prop_Jordan}. To be consistent with the block structure of ${\cal J}$,
	we take the blocks of $\beta$ into the following way:
	\begin{align*}
		\beta &=\Big(\beta(\lambda_1),\ldots,\beta(\lambda_\ell),\beta(\mu_{\ell+1},\omega_{\ell+1}),\ldots,\beta(\mu_k,\omega_k)\Big);\\
		\beta(\lambda_i) &=\Big(\beta_1(\lambda_i),\ldots, \beta_{n_i}(\lambda_i)\Big) \ \mbox{for $1\leq i\leq \ell$};\\
		\beta(\mu_{j},\omega_j) &=\Big(\beta^{(1)}_1(\mu_{j},\omega_j),\beta^{(1)}_2(\mu_{j},\omega_j),\ldots,\beta^{(n_j)}_1(\mu_{j},\omega_j),\beta^{(n_j)}_2(\mu_{j},\omega_j)
		\Big) \ \mbox{for $\ell+1\leq j\leq k$.}
	\end{align*}
	The following proposition shows that $\beta$ is well-defined.
	\begin{proposition}\label{prop_beta}
		For rational function ${\cal L}(s)$ given by \eqref{prob1},  $\beta$ is well defined by \eqref{equ_beta}. Further,
		$\beta$ is an $n$-dimensional real vector, 	$\beta_{n_i}(\lambda_i)\neq 0$ for $1\leq i\leq \ell$ , and $\big(\beta^{(n_j)}_{1}(\mu_j,\omega_j)\big)^2$ $+\big(\beta^{(n_j)}_2(\mu_j,\omega_j)\big)^2\neq 0$  for $\ell+1 \leq j\leq k$.
	\end{proposition}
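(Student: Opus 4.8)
The plan is to establish Proposition \ref{prop_beta} by directly analyzing the linear system \eqref{equ_beta} block by block, exploiting the fact that ${\cal J}$ is already in (real) Jordan form with a known, highly structured resolvent. First I would compute $-\beta{\cal J}(sI-{\cal J})^{-1}{\bf 1}^\top$ explicitly using the block-diagonal structure of ${\cal J}$: the computation decouples across Jordan blocks, so it suffices to treat a single real block ${\cal J}_{n_i}(\lambda_i)$ and a single complex block ${\cal J}_{n_j}(\mu_j,\omega_j)$. For a real Jordan block, $(sI-{\cal J}_{n_i}(\lambda_i))^{-1}$ is lower-triangular Toeplitz with entries $(s-\lambda_i)^{-r}$ along the $(r-1)$-th subdiagonal, so multiplying by ${\bf 1}^\top$ on the right and by the block of $\beta$ on the left produces exactly a partial fraction of the form $\sum_{r=1}^{n_i} \tilde c_{i,r}/(s-\lambda_i)^r$, where each $\tilde c_{i,r}$ is an explicit linear (indeed triangular) function of the components $\beta_1(\lambda_i),\ldots,\beta_{n_i}(\lambda_i)$, and $\tilde c_{i,n_i}$ is (up to the nonzero factor $\lambda_i$, which is nonzero since $A$ is nonsingular and hence $0$ is not an eigenvalue — though here we only need $\lambda_i \neq 0$, which follows from nonsingularity of the representation, or more robustly from the triangular structure with a nonzero leading coefficient) a nonzero multiple of $\beta_{n_i}(\lambda_i)$. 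For the complex blocks, the same computation carried out with the $2\times 2$ rotation-type blocks $\Theta(\mu_j,\omega_j)$ yields the pair of conjugate partial fractions $c_{j,r}/(s-\mu_j-\imath\omega_j)^r + \overline{c}_{j,r}/(s-\mu_j+\imath\omega_j)^r$, with the real and imaginary parts of $c_{j,r}$ being explicit real-linear combinations of $\beta^{(1)}_1(\mu_j,\omega_j),\ldots,\beta^{(n_j)}_2(\mu_j,\omega_j)$.

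Next I would argue that this block-triangular linear map from $\beta$ to the tuple of partial-fraction coefficients is invertible. Within each real block the coefficient map $\beta(\lambda_i)\mapsto(\tilde c_{i,1},\ldots,\tilde c_{i,n_i})$ is triangular with nonzero diagonal entries (they are powers/derivatives bookkeeping combined with the nonzero factor coming from the ${\cal J}$ left-multiplication), hence a bijection of ${\cal R}^{n_i}$ onto itself; likewise for each complex block the map $\beta(\mu_j,\omega_j)\mapsto({\sf Re}\,c_{j,r},{\sf Im}\,c_{j,r})_{r=1}^{n_j}$ is an invertible real-linear map of ${\cal R}^{2n_j}$. Since $\sum_i n_i + \sum_j 2n_j = n$ and the blocks are independent, the global map is an invertible linear map $T:{\cal R}^n\to{\cal R}^n$. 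Given the prescribed partial-fraction data $(c_{i,r})$ and $(c_{j,r})$ coming from ${\cal L}(s)$ — which, by hypothesis (\textbf{A1}), are real in the real-pole part and conjugate-paired in the complex-pole part — there is a unique $\beta=T^{-1}(\text{data})\in{\cal R}^n$ solving \eqref{equ_beta}, establishing both that $\beta$ is well-defined and that it is a real vector.

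Finally, for the nonvanishing claims: since $c_{i,n_i}\neq 0$ (the leading partial-fraction coefficient at each real pole, nonzero by coprimality \textbf{A2}, which prevents pole cancellation), and since $\tilde c_{i,n_i}$ is a nonzero scalar multiple of $\beta_{n_i}(\lambda_i)$ under the triangular coefficient map, we get $\beta_{n_i}(\lambda_i)\neq 0$; similarly $c_{j,n_j}\neq 0$ forces $\big(\beta^{(n_j)}_1(\mu_j,\omega_j)\big)^2+\big(\beta^{(n_j)}_2(\mu_j,\omega_j)\big)^2\neq 0$ because that quantity equals (a nonzero constant times) $|c_{j,n_j}|^2$ under the invertible block map. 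The main obstacle I anticipate is the bookkeeping in the complex-block computation: one must carefully expand $(sI-{\cal J}_{n_j}(\mu_j,\omega_j))^{-1}$ in terms of the $2\times 2$ blocks $\Theta(\mu_j,\omega_j)$, identify the scalar resolvent entries $(s-\mu_j-\imath\omega_j)^{-r}$ after diagonalizing $\Theta$, and track how the left-multiplication by ${\cal J}_{n_j}(\mu_j,\omega_j)$ (rather than by the shifted identity) contributes the factor that guarantees the leading coefficient is a nonzero multiple of the top-block of $\beta$; everything else is routine triangular linear algebra. It may be cleanest to record the explicit formula for the real-block case as a lemma and then note the complex case follows by applying it to the complexification, pairing conjugate eigenvalues, and taking real and imaginary parts.
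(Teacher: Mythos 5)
Your proposal follows essentially the same route as the paper's proof: a block-by-block computation of the resolvent of ${\cal J}$, a triangular (hence invertible) linear map from $\beta$ to the partial-fraction coefficients, and the nonvanishing claims deduced from $c_{i,n_i}\neq 0$ and $c_{j,n_j}\neq 0$, with the complex blocks handled by diagonalizing $\Theta(\mu_j,\omega_j)$ exactly as the paper does via its unitary conjugation. The only small repair needed is the justification that $\lambda_i\neq 0$ (and $\mu_j\pm\imath\omega_j\neq 0$), which should come from ({\bf A3}) — ${\cal L}(0)=1$ forces $q(0)\neq 0$, so no pole is zero — rather than from nonsingularity of a PH representation, since the proposition assumes no representation is given.
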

	
	\begin{proof}  Note that
		\begin{eqnarray}
			&& \Big(sI-{\cal J}_{m}(\lambda)\Big)^{-1}=\begin{bmatrix}
				(s-\lambda)^{-1}& & & \\
				(s-\lambda)^{-2}&(s-\lambda)^{-1}& &\\
				\vdots&\ddots &\ddots &\\
				(s-\lambda)^{-m}&\cdots&(s-\lambda)^{-2}&(s-\lambda)^{-1}
			\end{bmatrix},\label{inverse-J-1}\\
			&& \Big(sI-{\cal J}_{m}(\mu,\omega)\Big)^{-1}\label{inverse-J-2}\\
			&& \ \ =
			\begin{bmatrix}
				(sI-\Theta(\mu,\omega))^{-1}& & & \\
				(sI-\Theta(\mu,\omega))^{-2}&(sI-\Theta(\mu,\omega))^{-1}& &\\
				\vdots&\ddots &\ddots &\\
				(sI-\Theta(\mu,\omega))^{-m}&\cdots&(sI-\Theta(\mu,\omega))^{-2}&(sI-\Theta(\mu,\omega))^{-1}
			\end{bmatrix},
			\nonumber
		\end{eqnarray}
		where for any positive integer $m$, $(sI-\Theta(\mu,\omega))^{-m}=\Big((sI-\Theta(\mu,\omega))^{-1}\Big)^m$ and
		\begin{align*}
			(sI-\Theta(\mu,\omega))^{-1}=\begin{bmatrix}
				s-\mu &\omega\\
				-\omega& s-\mu
			\end{bmatrix}^{-1}=\frac{1}{(s-\mu)^2+\omega^2}\begin{bmatrix}
				s-\mu &-\omega\\
				\omega& s-\mu
			\end{bmatrix}.
		\end{align*}
		We first prove the part corresponding to the real pole $\lambda_i$.
		By (\ref{prop-1-1}) and (\ref{inverse-J-1}),
		\[
		{\cal J}_{n_i}(\lambda_i)(sI- {\cal J}_{n_i}(\lambda_i))^{-1}{\bf 1}^\top=
		\left(
		\begin{array}{ccc}
			\frac{\lambda_i}{s-\lambda_i}\\
			\frac{\lambda_i}{(s-\lambda_i)^2}+\frac{\lambda_i+1}{s-\lambda_i}\\
			\vdots\\
			\frac{\lambda_i}{(s-\lambda_i)^{n_i}}+\sum_{r=1}^{n_i-1}\frac{\lambda_i+1}{(s-\lambda_i)^r}
		\end{array}
		\right).
		\]
		Then $\beta(\lambda_i)=\big(\beta_{1}(\lambda_i),\ldots,\beta_{n_i}(\lambda_i)\big)$ can be uniquely determined by (\ref{equ_beta}) as follows:
		\begin{eqnarray}
			\beta_{n_i}(\lambda_i)&=&-\frac{ c_{i,n_i}}{\lambda_i};\label{prop-2-7}\\
			\beta_{r}(\lambda_i)&=&-\frac{1}{\lambda_i}\Big( c_{i,r}+(\lambda_i+1)\sum_{l=1}^{n_i-r}\Big(-\frac{1}{\lambda_i}\Big)^l c_{i,r+l}\Big) \
			\mbox{for } \ 1\leq r\leq n_i-1. \label{prop-2-8}
		\end{eqnarray}
		The proposition for $\beta(\lambda_i)$ with $1\leq i\leq \ell$ follows from (\ref{prop-2-7})-(\ref{prop-2-8}).
		
		Now consider the proposition corresponding to the pair of conjugate complex poles $\mu_{j}\pm \imath\omega_{j}$
		with $\ell+1\leq j\leq k$.
		Let
		\begin{small}
			\begin{align*}
				U_{n_j}(\imath,-\imath)=\frac{1}{\sqrt{2}}\begin{bmatrix}
					1&\imath& & & & \\
					& & \cdots&\cdots& & \\
					& & & & 1&\imath\\
					1&-\imath& & & & \\
					& &\cdots&\cdots& & \\
					& & & & 1&-\imath
				\end{bmatrix}_{2n_j\times2n_j},
			\end{align*}
		\end{small}
		and  $U_{n_j}^ H(\imath,-\imath)$ denote its conjugate transpose.	Note that $U_{n_j}(\imath,-\imath)U_{n_j}^ H(\imath,-\imath)=I$ and
		\begin{align}\label{u-j}
			{\cal J}_{n_j}(\mu_j,\omega_j)=U_{n_j}^H(\imath,-\imath)  \widetilde{\cal J}_{n_j}(\mu_j,\omega_j)U_{n_j}(\imath,-\imath),
		\end{align}
		where
		$
		\widetilde{\cal J}_{n_j}(\mu_j,\omega_j)=\begin{bmatrix}
			\widetilde{\cal J}_{n_j}(\mu_j+\imath\omega_j)&\\
			& \widetilde{\cal J}_{n_j}(\mu_j-\imath\omega_j)
		\end{bmatrix}
		$ 
		with $ \widetilde{\cal J}_{n_j}(\mu_j+\imath\omega_j)$ and $\widetilde{\cal J}_{n_j}(\mu_j-\imath\omega_j)$ being given by
		\begin{small}
			\begin{align*}
				\begin{bmatrix}
					\mu_j+\imath\omega_j& & & \\
					1 &\ddots & & \\
					&\ddots &\ddots &\\
					& &1 &\mu_j+\imath\omega_j
				\end{bmatrix}_{n_j\times n_j} \ \mbox{and} \
				\begin{bmatrix}
					\mu_j-\imath\omega_j& & & \\
					1 &\ddots & & \\
					&\ddots &\ddots &\\
					& &1 &\mu_j-\imath\omega_j
				\end{bmatrix}_{n_j\times n_j},
			\end{align*}
		\end{small}
		respectively.  From \eqref{u-j},
		\begin{align*}
			&\beta(\mu_j,\omega_j) {\cal J}_{n_j}(\mu_j,\omega_j)\Big(sI- {\cal J}_{n_j}(\mu_j,\omega_j)\Big)^{-1}{\bf 1}^\top\\
			& \ \ \ =\beta(\mu_j,\omega_j)U_{n_j}^ H(\imath,-\imath)  \widetilde{\cal J}_{n_j}(\mu_j,\omega_j)\Big(sI- \widetilde{\cal J}_{n_j}(\mu_j,\omega_j)\Big)^{-1}U_{n_j}(\imath,-\imath){\bf 1}^\top.
		\end{align*}
		Observe that
		\begin{align*}
			U_{n_j}(\imath,-\imath){\bf 1}^\top=\frac{1}{\sqrt{2}}\Big(\underbrace{1+\imath,1+\imath,\cdots, 1+\imath}_{n_j},\underbrace{1-\imath,1-\imath,\cdots, 1-\imath}_{n_j}\Big)^\top.
		\end{align*}
		Then
		\begin{align*}
			& \widetilde{\cal J}_{n_j}(\mu_j,\omega_j)\Big(sI- \widetilde{\cal J}_{n_j}(\mu_j,\omega_j)\Big)^{-1}U_{n_j}(\imath,-\imath){\bf 1}^\top\\
			& \ \ \ =\left(
			\begin{array}{cc}
				\frac{1+\imath}{\sqrt{2}}\widetilde{\cal J}_{n_j}(\mu_j+\imath\omega_j)\Big(sI- \widetilde{\cal J}_{n_j}(\mu_j+\imath\omega_j)\Big)^{-1}{\bf 1}^\top\\
				\frac{1-\imath}{\sqrt{2}}\widetilde{\cal J}_{n_j}(\mu_j-\imath\omega_j)\Big(sI- \widetilde{\cal J}_{n_j}(\mu_j-\imath\omega_j)\Big)^{-1}{\bf 1}^\top
			\end{array}
			\right).
		\end{align*}
		Similar to the case of real poles, by (\ref{inverse-J-2}), we have
		\begin{align*}
			\widetilde{\cal J}_{n_j}(\mu_j+\imath\omega_j)\Big(sI- \widetilde{\cal J}_{n_j}(\mu_j+\imath\omega_j)\Big)^{-1}{\bf 1}^\top=\left(
			\begin{array}{ccc}
				\frac{\mu_j+ \imath\omega_j}{s-(\mu_j+\imath\omega_j)}\\
				\frac{\mu_j+\imath\omega_j}{(s-(\mu_j+\imath\omega_j))^2}+\frac{1+\mu_j+\imath\omega_j}{s-(\mu_j+\imath\omega_j)}\\
				\vdots\\
				\frac{\mu_j+\imath\omega_j}{(s-(\mu_j+\imath\omega_j))^{n_j}}+\sum_{r=1}^{n_j-1}\frac{1+\mu_j+\imath\omega_j}{(s-(\mu_j+\imath\omega_j))^r}
			\end{array}
			\right).
		\end{align*}
		and
		\begin{align*}
			\widetilde{\cal J}_{n_j}(\mu_j-\imath\omega_j)\Big(sI- \widetilde{\cal J}_{n_j}(\mu_j-\imath\omega_j)\Big)^{-1}{\bf 1}^\top=\left(
			\begin{array}{ccc}
				\frac{\mu_j- \imath\omega_j}{s-(\mu_j-\imath\omega_j)}\\
				\frac{\mu_j-\imath\omega_j}{(s-(\mu_j-\imath\omega_j))^2}+\frac{1+\mu_j-\imath\omega_j}{s-(\mu_j-\imath\omega_j)}\\
				\vdots\\
				\frac{\mu_j-\imath\omega_j}{(s-(\mu_j-\imath\omega_j))^{n_j}}+\sum_{r=1}^{n_j-1}\frac{1+\mu_j-\imath\omega_j}{(s-(\mu_j-\imath\omega_j))^r}
			\end{array}
			\right).
		\end{align*}
		Note that
		\begin{align*}
			\beta(\mu_j,\omega_j)U_{n_j}^ H(\imath,-\imath)=&\Big(\beta^{(1)}_1(\mu_j,\omega_j)-\imath \beta^{(1)}_2(\mu_j,\omega_j),\beta^{(2)}_1(\mu_j,\omega_j)-\imath \beta^{(2)}_2(\mu_j,\omega_j),\ldots,\\
			&\beta^{(n_j)}_1(\mu_j,\omega_j)-\imath \beta^{(n_j)}_2(\mu_j,\omega_j),
			\beta^{(1)}_1(\mu_j,\omega_j)+\imath \beta^{(1)}_2(\mu_j,\omega_j),\\
			&\beta^{(2)}_1(\mu_j,\omega_j)+\imath \beta^{(2)}_2(\mu_j,\omega_j),\ldots,\beta^{(n_j)}_1(\mu_j,\omega_j)+\imath \beta^{(n_j)}_2(\mu_j,\omega_j)\Big).
		\end{align*}
		To prove the proposition, by (\ref{prop-1-1}) and with a consideration of (\ref{prop-2-7})-(\ref{prop-2-8}) for $1\leq i\leq \ell$, it suffices to prove that
		for $j=\ell+1,\ldots,k$,
		\[
		\beta(\mu_j,\omega_j)=\Big(\beta^{(1)}_1(\mu_j,\omega_j),\beta^{(1)}_2(\mu_j,\omega_j),\ldots,
		\beta^{(n_j)}_1(\mu_j,\omega_j),\beta^{(n_j)}_2(\mu_j,\omega_j)\Big)
		\]
		is uniquely determined by the following equation:
		\begin{eqnarray*}
			&&\sum_{r=1}^{n_j} \big(\frac{ c_{j,r}}{(s-\mu_j-\imath\omega_j)^r}+\frac{ \overline{c}_{j,r}}{(s-\mu_j+\imath\omega_j)^r}\big)\nonumber\\
			&& \ \ \ = -\beta(\mu_j,\omega_j)U_{n_j}^ H(\imath,-\imath) 	\left(\begin{array}{cc}
				\frac{1+\imath}{\sqrt{2}} \widetilde{\cal J}_{n_j}(\mu_j+\imath\omega_j)\Big(sI- \widetilde{\cal J}_{n_j}(\mu_j+\imath\omega_j)\Big)^{-1}{\bf 1}^\top\\
				\frac{1-\imath}{\sqrt{2}} \widetilde{\cal J}_{n_j}(\mu_j-\imath\omega_j)\Big(sI- \widetilde{\cal J}_{n_j}(\mu_j-\imath\omega_j)\Big)^{-1}{\bf 1}^\top
			\end{array}
			\right).
		\end{eqnarray*}
		The two terms on the left hand side of the above equation can be expressed as
		\begin{align*}
			\sum_{r=1}^{n_j} \frac{ c_{j,r}}{(s-\mu_j-\imath\omega_j)^r}\nonumber=& - \Big(\beta^{(1)}_1(\mu_j,\omega_j)-\imath \beta^{(1)}_2(\mu_j,\omega_j),\beta^{(2)}_1(\mu_j,\omega_j)-\imath \beta^{(2)}_2(\mu_j,\omega_j),\ldots,\\
			&\quad \quad \beta^{(n_j)}_1(\mu_j,\omega_j)-\imath \beta^{(n_j)}_2(\mu_j,\omega_j)	\Big)\\
			&\quad \quad \times
			\frac{1+\imath}{\sqrt{2}}\widetilde{\cal J}_{n_j}(\mu_j+\imath\omega_j)\Big(sI- \widetilde{\cal J}_{n_j}(\mu_j+\imath\omega_j)\Big)^{-1}{\bf 1}^\top;\\
			\sum_{r=1}^{n_j} \frac{ \overline{c}_{j,r}}{(s-\mu_j+\imath\omega_j)^r}=& - \Big(\beta^{(1)}_1(\mu_j,\omega_j)+\imath \beta^{(1)}_2(\mu_j,\omega_j),\beta^{(2)}_1(\mu_j,\omega_j)+\imath \beta^{(2)}_2(\mu_j,\omega_j),\ldots,\\
			&\quad \quad \beta^{(n_j)}_1(\mu_j,\omega_j)+\imath \beta^{(n_j)}_2(\mu_j,\omega_j)	\Big)\\
			&\quad \quad \times
			\frac{1-\imath}{\sqrt{2}} \widetilde{\cal J}_{n_j}(\mu_j-\imath\omega_j)\Big(sI- \widetilde{\cal J}_{n_j}(\mu_j-\imath\omega_j)\Big)^{-1}{\bf 1}^\top.
		\end{align*}
		Thus, we have
		\begin{align*}
			&\beta^{(n_j)}_1(\mu_j,\omega_j)-\imath \beta^{(n_j)}_2(\mu_j,\omega_j)=-\frac{(1-\imath) c_{j,n_j}}{\sqrt{2}(\mu_j+\imath\omega_j)};\\
			&\beta^{(r)}_1(\mu_j,\omega_j)-\imath \beta^{(r)}_2(\mu_j,\omega_j)\nonumber\\
			&\quad  =-\frac{1-\imath}{\sqrt{2}(\mu_j+\imath\omega_j)}\Big( c_{j,r}+(1+\mu_j+\imath\omega_j)\sum_{l=1}^{n_j-r}\Big(-\frac{1}{\mu_j+\imath\omega_j}\Big)^l c_{j,r+l}\Big) \ \mbox{for} \ 1\leq r\leq n_j-1;\\
			&\beta^{(n_j)}_1(\mu_j,\omega_j)+\imath \beta^{(n_j)}_2(\mu_j,\omega_j)=-\frac{(1+\imath) \overline{c}_{j,n_j}}{\sqrt{2}(\mu_j-\imath\omega_j)};\\
			&\beta^{(r)}_1(\mu_j,\omega_j)+\imath \beta^{(r)}_2(\mu_j,\omega_j)\\
			&\quad =-\frac{1+\imath}{\sqrt{2}(\mu_j-\imath\omega_j)}\Big( \overline{c}_{j,r}+(1+\mu_j-\imath\omega_j)\sum_{l=1}^{n_j-r}\Big(-\frac{1}{\mu_j-\imath\omega_j}
			\Big)^l \overline{c}_{j,r+l}\Big) \ \mbox{for} \ 1\leq r\leq n_j-1.
		\end{align*}
		Therefore, $\beta(\mu_j,\omega_j)$ can be uniquely determined as follows: for  $ 1\leq r\leq n_j-1$,
		\begin{align}
			\beta^{(r)}_1(\mu_j,\omega_j)&=-{\sf Re}\bigg(\frac{1-\imath}{\sqrt{2}(\mu_j+\imath\omega_j)}\Big( c_{j,r}+(1+\mu_j+\imath\omega_j)\sum_{l=1}^{n_j-r}\Big(-\frac{1}{\mu_j+\imath\omega_j}\Big)^l c_{j,r+l}\Big)\bigg);
			\label{prop-2-10}\\
			\beta^{(r)}_2(\mu_j,\omega_j)&={\sf Im}\bigg(\frac{1-\imath}{\sqrt{2}(\mu_j+\imath\omega_j)}\Big( c_{j,r}+(1+\mu_j+\imath\omega_j)\sum_{l=1}^{n_j-r}\Big(-\frac{1}{\mu_j+\imath\omega_j}\Big)^l c_{j,r+l}\Big)\bigg);
			\label{prop-2-11}
		\end{align}
		\begin{align}
			\beta^{(n_j)}_1(\mu_j,\omega_j)&=-{\sf Re}\Big(\frac{(1-\imath) c_{j,n_j}}{\sqrt{2}(\mu_j+\imath\omega_j)}\Big);\label{prop-2-12}\\
			\beta^{(n_j)}_2(\mu_j,\omega_j)&={\sf Im}\Big(\frac{(1-\imath) c_{j,n_j}}{\sqrt{2}(\mu_j+\imath\omega_j)}\Big).\label{prop-2-13}
		\end{align}
		
		The proposition corresponding to the pair of conjugate complex poles $\mu_{j}\pm \imath\omega_{j}$
		with $\ell+1\leq j\leq k$ follows from (\ref{prop-2-10})-(\ref{prop-2-13}).  Therefore, $\beta$ is well defined by \eqref{equ_beta} and is an $n$-dimensional real vector.
		
		Now we show that $\beta_{n_i}(\lambda_i)\neq 0$ for $1\leq i\leq \ell$ , and $\big(\beta^{(n_j)}_{1}(\mu_j,\omega_j)\big)^2$ $+\big(\beta^{(n_j)}_2(\mu_j,\omega_j)\big)^2$ $\neq 0$  for $\ell+1 \leq j\leq k$. Note that
		\begin{small}
			\begin{align*}
				&\beta {\cal J}(sI-{\cal J})^{-1}{\bf 1}^\top=	\beta (sI-{\cal J})^{-1}{\cal J}{\bf 1}^\top\\
				&\quad= \bigg(\beta(\lambda_1)\big(sI-{\cal J}_{n_1}(\lambda_1)\big)^{-1},...,\beta(\lambda_{\ell})\big(sI- {\cal J}_{n_{\ell}}(\lambda_{\ell})\big)^{-1},\\
				&\quad\quad\quad\beta(\mu_{\ell+1},\omega_{\ell+1})\big(sI- {\cal J}_{n_{\ell+1}}(\mu_{\ell+1},\omega_{\ell+1})\big)^{-1},..., \beta(\mu_k,\omega_k)\big(sI- {\cal J}_{n_k}(\mu_k,\omega_k)\big)^{-1}\bigg){\cal J}{\bf 1}^\top.
			\end{align*}
		\end{small}
		Recall \eqref{inverse-J-1} and \eqref{inverse-J-2}, $(s-\lambda_i)^{-n_i}$ only appears in the $n_i$-th row of $(sI- {\cal J}_{n_i}(\lambda_i))^{-1}$ and $((s-\mu_j)^2+\omega_j^2)^{-n_j}$ only appears in the $(2n_j-1)$-th and $2n_j$-th rows of $(sI-{\cal J}_{n_j}(\mu_j\pm \imath \omega_j))^{-1}$. By \eqref{equ_beta}, since $ c_{i,n_i}\neq 0$ and $ c_{j,n_j}\neq 0$, we have $\beta_{n_i}(\lambda_i)\neq 0$ for $1\leq i\leq \ell$ , and $\big(\beta^{(n_j)}_{1}(\mu_j,\omega_j)\big)^2$ $+\big(\beta^{(n_j)}_2(\mu_j,\omega_j)\big)^2$ $\neq 0$  for $\ell+1 \leq j\leq k$. Hence, the proof is complete.
	\end{proof}
	
	For any rational function $ {\cal L}(s)$ given by \eqref{prob1},  from its poles given in (\ref{pole}), we can determine
	$ {\cal J}_{n_i}(\lambda_i)$ for $1\leq i\leq \ell$, and $ {\cal J}_{n_j}(\mu_j,\omega_j)$ for $\ell+1\leq j\leq k$, then $J$ can be constructed as
	(\ref{prop-1-1}). Further, the $n$-dimensional vector $\beta$ can be determined in (\ref{equ_beta}) from Proposition  \ref{prop_beta}.
	With the help of Propositions \ref{prop_Jordan}-\ref{prop_P}, we can transform the question of finding an $n$-order PH representation $(\alpha,A)$ such that \eqref{equ_Lapl}  holds into the problem of identifying a feasible solution of the set of linear  and quadratic equations.
	To that end, let
	\begin{align}\label{equ:xi}
		\xi=-\Big(\sum_{i=1}^\ell n_i\lambda_i+\sum_{j=\ell+1}^k2n_j\mu_j\Big).
	\end{align}
	For the sake of the notation simplicity, for any matrix $H=(h_{ij})_{n\times n}\in {\cal R}^{n\times n}$, its following row-blocked form is called {\it ${\cal J}$-blocked  partition} with respect to the block structure of ${\cal J}$ given by (\ref{prop-1-1}):
	\begin{eqnarray*}
		H=\left[
		\begin{array}{cccc}
			H(\lambda_1)\\
			\vdots\\
			H(\lambda_\ell)\\
			H(\mu_{\ell+1},\omega_{\ell+1})\\
			\vdots\\
			H(\mu_k,\omega_k)
		\end{array}
		\right] \ \mbox{with} \
		H(\lambda_i)=\left[
		\begin{array}{cccc}
			H_1(\lambda_i)\\
			\vdots\\
			H_{n_i}(\lambda_i)
		\end{array}
		\right]
	\end{eqnarray*}
	\begin{eqnarray*}
		\mbox{and} \
		H(\mu_j,\omega_j)=\left[
		\begin{array}{cccc}
			H^{(1)}(\mu_j,\omega_j)\\
			\vdots\\
			H^{(n_j)}(\mu_j,\omega_j)
		\end{array}
		\right], \
		H^{(r)}(\mu_j,\omega_j)=\left[
		\begin{array}{cc}
			H^{(r)}_1(\mu_j,\omega_j)\\
			H^{(r)}_2(\mu_j,\omega_j)
		\end{array}
		\right], \ 1\leq r\leq n_j,
	\end{eqnarray*}
	where $H(\lambda_1)$ is the first $n_i$ rows of $H$, and in general, $H(\lambda_i)$ is the rows of $H$ through
	$(n_{[0,i-1]}+1)$ to $n_{[0,i]}$; and
	$H(\mu_j,\omega_j)$ is the rows of $H$ through $(n_{[0,j-1]}+1)$ to
	$n_{[0, j]}$ with the convention
	\[
	n_{[0,r]}=\left\{
	\begin{array}{lll}
		0, &\mbox{if $r=0$};\\
		n_1+\cdots+n_r, &\mbox{if $1\leq r\leq \ell$};\\
		n_1+\cdots+n_\ell+2(n_{\ell+1}+\cdots+n_{r}), &\mbox{if $\ell+1\leq r\leq k$}.
	\end{array}
	\right.
	\]
	Clearly,
	$H(\lambda_i)\in {\cal R}^{n_i\times n}$ for $1\leq i\leq \ell$, and $H(\mu_j,\omega_j)\in {\cal R}^{2n_j\times n}$
	for $\ell+1\leq j\leq k$.

	\begin{theorem}\label{iff}
		For rational function ${\cal L}(s)$ given by \eqref{prob1}, there exists an $n$-order continuous-time PH representation $(\alpha,A)$ such that \eqref{equ_Lapl} holds if and only if there exist $P=(p_{ij})_{n\times n}$ and $A=(a_{ij})_{n\times n}$ such that the following $($in$)$equalities hold$:$
		\begin{eqnarray}
			\left\{
			\begin{array}{lllll}
				&PA={\cal J}P;\\
				& P {\bf 1}^\top={\bf 1}^\top;\\
				&\beta  P\geq {\bf 0};\\
				&A {\bf 1}^\top \leq {\bf 0};\\
				& -\xi\leq a_{ii}\leq 0 \ \mbox{for} \ 1\leq i\leq n;\\
				&0\leq a_{ij}\leq \xi \ \mbox{for} \ i\neq j, 1\leq i,j\leq n,
			\end{array}
			\right. \label{equ_sys}
		\end{eqnarray}	
		where ${\cal J}$ is given by \eqref{prop-1-1} in Proposition {\rm \ref{prop_Jordan}}, and $\beta$ is given by Proposition {\rm \ref{prop_beta}}. Moreover, the feasible solution $(A,P)$ to \eqref{equ_sys} directly provides an $n$-order continuous-time PH representation $(\alpha,A)$ with
		$\alpha=\beta P$.
	\end{theorem}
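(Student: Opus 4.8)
The plan is to prove the two directions separately, using Propositions~\ref{prop_Jordan}--\ref{prop_beta} to reduce everything to the fixed Jordan form ${\cal J}$ and the fixed vector $\beta$.

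First I would establish the forward direction. Suppose $(\alpha,A)$ is an $n$-order continuous-time PH representation with \eqref{equ_Lapl}. By Proposition~\ref{prop_Jordan} the Jordan form of $A$ must be exactly ${\cal J}$, and by Proposition~\ref{prop_P} we may choose the similarity matrix $P$ so that $A=P^{-1}{\cal J}P$ and $P{\bf 1}^\top={\bf 1}^\top$; rewriting $A=P^{-1}{\cal J}P$ as $PA={\cal J}P$ gives the first two lines of \eqref{equ_sys}. Substituting into \eqref{equ_Jordanform} yields ${\cal L}(s)=-\alpha P^{-1}{\cal J}(sI-{\cal J})^{-1}P{\bf 1}^\top=-(\alpha P^{-1}){\cal J}(sI-{\cal J})^{-1}{\bf 1}^\top$; comparing with the defining equation \eqref{equ_beta} for $\beta$ and using that $\beta$ is \emph{uniquely} determined (Proposition~\ref{prop_beta}), I get $\alpha P^{-1}=\beta$, i.e.\ $\alpha=\beta P$. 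Since $\alpha\in{\cal R}_+^n$ this is exactly $\beta P\ge{\bf 0}$. The subgenerator conditions $a_{ii}<0$, $a_{ij}\ge0$ and $A{\bf 1}^\top\le{\bf 0}$ are part of being a PH representation, so the only remaining point is the \emph{boundedness} $|a_{ij}|\le\xi$ with $\xi$ from \eqref{equ:xi}. Here I would use that $\mathrm{tr}(A)=\mathrm{tr}({\cal J})=\sum_i n_i\lambda_i+\sum_j 2n_j\mu_j=-\xi$, so $\sum_i(-a_{ii})=\xi$; since each $-a_{ii}>0$, every $-a_{ii}\le\xi$, hence $-\xi\le a_{ii}\le 0$. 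For the off-diagonal entries, the row-sum condition $\sum_j a_{ij}\le 0$ combined with $a_{ij}\ge0$ for $j\ne i$ gives $\sum_{j\ne i}a_{ij}\le -a_{ii}\le\xi$, so each $a_{ij}\le\xi$. This shows $(A,P)$ satisfies \eqref{equ_sys}.

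For the converse, suppose $(P,A)$ solves \eqref{equ_sys}. From $PA={\cal J}P$ and the nonsingularity of $P$ (which I must check — see below) we get $A=P^{-1}{\cal J}P$, so $\mathrm{det}(sI-A)=\mathrm{det}(sI-{\cal J})=q(s)$, and \eqref{equ_Jordanform} together with the definition \eqref{equ_beta} of $\beta$ gives $-\alpha A(sI-A)^{-1}{\bf 1}^\top=-\beta{\cal J}(sI-{\cal J})^{-1}P{\bf 1}^\top$. Using $P{\bf 1}^\top={\bf 1}^\top$ this equals $-\beta{\cal J}(sI-{\cal J})^{-1}{\bf 1}^\top={\cal L}(s)$, which is \eqref{equ_Lapl}. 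Setting $\alpha=\beta P$, the constraint $\beta P\ge{\bf 0}$ gives $\alpha\in{\cal R}_+^n$; evaluating \eqref{equ_Lapl} at $s=0$ gives ${\cal L}(0)=-\alpha A\cdot(-A)^{-1}{\bf 1}^\top=\alpha{\bf 1}^\top=\beta P{\bf 1}^\top=\beta{\bf 1}^\top$, and ${\cal L}(0)=1$ by (\textbf{A3}), so $\alpha{\bf 1}^\top=1$. The sign conditions $a_{ii}<0$ (strict, from $-\xi\le a_{ii}\le0$ together with nonsingularity of $A$, so no $a_{ii}=0$ row can be all-zero — actually one needs $a_{ii}\ne 0$; this follows since $A$ is nonsingular and has nonnegative off-diagonal... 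I would argue a zero diagonal entry with nonnegative off-diagonals and nonpositive row sum forces that row to be zero, contradicting nonsingularity), $a_{ij}\ge0$ for $i\ne j$, and $A{\bf 1}^\top\le{\bf 0}$ are read off directly from \eqref{equ_sys}. Hence $(\alpha,A)$ is an $n$-order continuous-time PH representation, proving the converse and the final claim $\alpha=\beta P$.

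The main obstacle I anticipate is the nonsingularity of $P$ in the converse direction: \eqref{equ_sys} as written does not literally say $\mathrm{det}(P)\ne0$. I would handle this by arguing that $PA={\cal J}P$ with $A$ nonsingular (the subgenerator conditions plus strictness of the diagonal force $\mathrm{det}(A)\ne0$) and ${\cal J}$ nonsingular forces $P$ nonsingular: if $v P=0$ for some row vector $v$, then $v{\cal J}P = vP A = 0$, so the row space of $P$ is ${\cal J}$-invariant-annihilating from the left... more carefully, the left null space of $P$ is invariant under ${\cal J}$, hence contains a left eigenvector of ${\cal J}$; but then tracking this through $PA={\cal J}P$ and $P{\bf 1}^\top={\bf 1}^\top$ yields a contradiction with $\lambda_1\ne 0$ (note $0$ is not a pole since all poles have $\mathrm{Re}\le$ something and ${\cal L}(0)=1$ is finite). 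Alternatively, and more cleanly, I expect the intended reading is that nonsingularity of $A$ is imposed (it appears in the problem statement preceding the theorem) and that one then shows $P$ must be nonsingular, or that the theorem is stated with the implicit understanding that $A$ ranges over nonsingular matrices; I would make this explicit at the start of the converse. The rest of the argument is bookkeeping with the block structure and the uniqueness of $\beta$.
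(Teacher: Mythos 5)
Your necessity argument is the paper's: Proposition~\ref{prop_P} supplies $P$ with $PA={\cal J}P$ and $P{\bf 1}^\top={\bf 1}^\top$, the uniqueness of $\beta$ in Proposition~\ref{prop_beta} gives $\alpha P^{-1}=\beta$, hence $\beta P=\alpha\geq{\bf 0}$, and the trace identity ${\sf tr}(A)=-\xi$ together with the row-sum condition yields the box constraints. The skeleton of your sufficiency also matches the paper (set $\alpha=\beta P$, pass the transform through $A=P^{-1}{\cal J}P$, get $\alpha{\bf 1}^\top=1$ from ${\cal L}(0)=1$, read off the sign conditions, and recover $a_{ii}<0$ from nonsingularity of $A$ via the zero-row argument). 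The genuinely critical step is the one you flag yourself: \eqref{equ_sys} does not assert that $P$ is invertible, and neither of your two proposed repairs closes this gap.

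The first repair is circular and rests on a false implication: you propose to get $\det(A)\neq0$ from the subgenerator conditions ``plus strictness of the diagonal,'' but strict negativity of the diagonal is exactly what you later deduce from $\det(A)\neq0$, and in any case a subgenerator with strictly negative diagonal can be singular (take $\left[\begin{smallmatrix}-1&1\\ 1&-1\end{smallmatrix}\right]$); moreover $PA={\cal J}P$ with $A$ and ${\cal J}$ both nonsingular still does not force $P$ nonsingular (e.g.\ $A={\cal J}$ diagonal and $P$ a singular matrix commuting with it). The second repair has the right germ but misplaces the contradiction: if $wP={\bf 0}$ then indeed $w{\cal J}P=wPA={\bf 0}$, so the (complexified) left null space of $P$ is ${\cal J}$-invariant and, if nonzero, contains a left eigenvector $w$ of ${\cal J}$; the contradiction is not with ``$\lambda_1\neq0$'' but with $P{\bf 1}^\top={\bf 1}^\top$: by the lower-bidiagonal block structure \eqref{prop-1-1}, every left eigenvector of ${\cal J}$ is supported on the first one (real block) or first two (complex block) coordinates of its block, proportional there to $1$ or to $(1,\imath)$, so $w{\bf 1}^\top\neq0$, whereas $wP={\bf 0}$ and $P{\bf 1}^\top={\bf 1}^\top$ force $w{\bf 1}^\top=wP{\bf 1}^\top=0$. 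The paper closes the same gap by a different, equally elementary route: $P{\bf 1}^\top={\bf 1}^\top$ makes every row of $P$ nonzero, and the blockwise relations \eqref{real_P} and \eqref{complex_P} extracted from $PA={\cal J}P$ exhibit the rows of $P$ as Jordan chains of (generalized) left eigenvectors of $A$ attached to the distinct eigenvalues of ${\cal J}$, hence linearly independent (Lancaster and Tismenetsky), so $P$ is nonsingular and then $A=P^{-1}{\cal J}P$ inherits nonsingularity from ${\cal J}$. In particular, nonsingularity of $A$ is a conclusion of the sufficiency proof, not a hypothesis; your fallback of implicitly restricting $A$ to nonsingular matrices is unnecessary and would weaken the theorem. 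Once invertibility of $P$ is supplied by either of these arguments, the rest of your converse goes through as written.
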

	
	\begin{proof}
		First we prove the necessity.	From Proposition \ref{prop_P}, there exists nonsingular $P$ such that the first two equations hold in (\ref{equ_sys}).  That is, $A=P^{-1}{\cal J}P$ and $P {\bf 1}^\top={\bf 1}^\top$. Thus, from \eqref{equ_Lapl},
		\begin{align*}
			{\cal L}(s)=\frac{p(s)}{q(s)}=-\alpha A(sI-A)^{-1}{\bf 1}^\top=-\alpha P^{-1} {\cal J}(sI-{\cal J})^{-1}{\bf 1}^\top.
		\end{align*}
		Then the uniqueness of $\beta$ in Proposition \ref{prop_beta} implies that $\alpha P^{-1}=\beta$. This gives that $\beta  P=\alpha\geq {\bf 0}$. Again from $PA={\cal J}P$ and nonsingularity of $P$, the spectrum of $A$ is the same as that of ${\cal J}$.
		Then
		\begin{align*}
			\sum_{i=1}^na_{ii}={\sf tr}(A)=\sum_{i=1}^\ell n_i\lambda_i+\sum_{j=\ell+1}^k2n_j\mu_j= -\xi.
		\end{align*}
		As $a_{ii}\leq0$ for each $i$, we have $-\xi\leq a_{ii}\leq 0$ for $1\leq i\leq n$. By $A {\bf 1}^\top \leq {\bf 0}$ and $ a_{ij}\geq 0$ for $i\neq j$, we obtain that $a_{ij}\leq-a_{ii}\leq  \xi$ for $i\neq j, 1\leq i,j\leq n$.
		
		Now consider the sufficiency. Make ${\cal J}$-blocked partition for matrix $P$,
		\begin{align}
			P=\left[\begin{array}{ccc}
				P(\lambda_1)\\
				\vdots\\
				P(\lambda_\ell)\\
				P(\mu_{\ell+1},\omega_{\ell+1})\\
				\vdots\\
				P(\mu_{k},\omega_k)\\
			\end{array}\right].\label{P-block}
		\end{align}
		From the first relation in \eqref{equ_sys}, we have $P(\lambda_i)A= {\cal J}_{n_i}(\lambda_i)P(\lambda_i)$ and $P(\mu_{j}, \omega_j)A= {\cal J}_{n_j}(\mu_{j},\omega_j)P(\mu_{j},\omega_j)$. Recall the form of $ {\cal J}_{n_i}(\lambda_i)$, we have
		\begin{align}\label{real_P}
			\begin{split}
				&P_{1}(\lambda_i)(A-\lambda_iI)=0;\\
				&P_{r}(\lambda_i)(A-\lambda_iI)=P_{r-1}(\lambda_i), \quad 2\leq r\leq n_i.
			\end{split}
		\end{align}
			Recall the structure of ${\cal J}_{n_j}(\mu_{j}, \omega_j)$. We have
			\begin{align*}
				&P^{(1)}_1(\mu_{j},\omega_j)A=\mu_jP^{(1)}_1(\mu_{j},\omega_j)-\omega_jP^{(1)}_2(\mu_{j},\omega_j),\\
				&P^{(1)}_2(\mu_{j},\omega_j)A=\mu_jP^{(1)}_2(\mu_{j},\omega_j)+\omega_jP^{(1)}_1(\mu_{j},\omega_j),\\
				&P^{(r)}_1(\mu_{j},\omega_j)A=P^{(r-1)}_1(\mu_{j},\omega_j)+\mu_jP^{(r)}_1(\mu_{j},\omega_j)-\omega_jP^{(r)}_2(\mu_{j},\omega_j),\quad 2\leq r\leq n_j,\\
				&P^{(r)}_2(\mu_{j},\omega_j)A=P^{(r-1)}_2(\mu_{j},\omega_j)+\mu_jP^{(r)}_2(\mu_{j},\omega_j)+\omega_jP^{(r)}_1(\mu_{j},\omega_j),\quad 2\leq r\leq n_j.
			\end{align*}
			Equivalently,
			\begin{align}\label{complex_P}
				\begin{split}
					&\Big(P^{(1)}_1(\mu_{j},\omega_j)+\imath P^{(1)}_2(\mu_{j},\omega_j)\Big)\Big(A-(\mu_j+\imath \omega_j)I\Big)=0,\\
					&\Big(P^{(1)}_1(\mu_{j},\omega_j)-\imath P^{(1)}_2(\mu_{j},\omega_j)\Big)\Big(A-(\mu_j-\imath \omega_j)I\Big)=0,\\
					&\Big(P^{(r)}_1(\mu_{j},\omega_j)+\imath P^{(r)}_2(\mu_{j},\omega_j)\Big)\Big(A-(\mu_j+\imath \omega_j)I\Big)\\
					&\  =P^{(r-1)}_1(\mu_{j},\omega_j)+\imath P^{(r-1)}_2(\mu_{j},\omega_j),\quad 2\leq r\leq n_j,\\
					&\Big(P^{(r)}_1(\mu_{j},\omega_j)-\imath P^{(r)}_2(\mu_{j},\omega_j)\Big)\Big(A-(\mu_j-\imath \omega_j)I\Big)\\
					&\ =P^{(r-1)}_1(\mu_{j},\omega_j) -\imath P^{(r-1)}_2(\mu_{j},\omega_j),\quad 2\leq r\leq n_j.
				\end{split}
			\end{align}		
			It follows from the second equation in \eqref{equ_sys} that each row vector of $P$ is not zero. By Exercise in Section 6.3 of \cite{lancaster1985theory}, \eqref{real_P} and \eqref{complex_P} imply that $P$ is the (generalized) eigenvector matrix of $A$ so it is nonsingular.	Then the nonsingularity of ${\cal J}$ gives that $A$ is nonsingular. Let
			$\alpha=\beta P$. By the definition of $\beta$, we have
			\begin{align*}
				{\cal L}(s)=-\beta {\cal J}(sI-{\cal J})^{-1}{\bf 1}^\top=-\beta PP^{-1}{\cal J}(sI-{\cal J})^{-1}P{\bf 1}^\top=-\alpha A(sI-A)^{-1}{\bf 1}^\top.
			\end{align*}
			It remains to show that $\alpha{\bf 1}^\top=1$. This is straightforward by ${\cal L}(0)=1$.
			Therefore, the sufficiency is proved.
		\end{proof}
		
		With the help of Propositions \ref{prop_Jordan}-\ref{prop_beta}, Theorem \ref{iff} equivalently transforms the problem of finding the minimal representation $(\alpha, A)$ with the corresponding LST (${\cal L}(s)$)  given by (\ref{prob1}) into the solution existence problem for a set of linear and quadratic equations given by (\ref{equ_sys}). The transformation is based on vector $\beta$ and Jordan form ${\cal J}$. Both of them are uniquely determined from ${\cal L}(s)$ although ${\cal J}$ is the Jordan form of $A$. Telek and Horv\'ath \cite{telek2007} also provide a representation transformation but by considering the Jordan form of $(-A)^{-1}$ rather than $A$ itself. We know that the algorithm involving the computation of the matrix inverse usually leads  to numerical instability and high computational complexity.
		Compared with Telek and Horv\'ath \cite{telek2007}, moreover, the properties of $\beta$ obtained in Proposition \ref{prop_beta} can help us to establish the convergence of the {\bf AM} algorithm proposed in the next subsection for identifying the feasible solution to (\ref{equ_sys}).

		Next we equivalently transform the problem of identifying the existence of a feasible solution to (\ref{equ_sys})
		into an optimization problem.
		
		\subsection{Non-convex Optimization Problem}\label{bcd-alg}
		We first formulate condition \eqref{equ_sys} as a  non-convex optimization problem,  then use the ({\bf AM}) approach developed in  non-convex optimization theory to solve it. For the ({\bf AM}) approach, see \cite{bertsekas2015parallel}.
		
		To solve \eqref{equ_sys}, we reformulate it as an optimization problem:
		\begin{eqnarray}
			{\bf (OP):}\left\{
			\begin{array}{ll}
				& \min_{P,A} \Big\|PA-{\cal J}P \Big\|^2_F \label{july-15-1}\\
				& {\sf s.t.} \ \left\{
				\begin{array}{llll}
					& P {\bf 1}^\top={\bf 1}^\top;
					\\
					&\beta  P\geq {\bf 0};\\
					&A {\bf 1}^\top \leq {\bf 0};\\
					& -\xi\leq a_{ii}\leq 0 \ \mbox{for} \ 1\leq i\leq n;\\
					&0\leq a_{ij}\leq \xi \ \mbox{for} \ i\neq j, 1\leq i,j\leq n.
				\end{array}
				\right.
			\end{array}
			\right. \label{july-15-2}
		\end{eqnarray}
		Note that \eqref{equ_sys} is solvable if and only if the above optimization problem ({\bf OP}) has a solution such that the optimal objective value is zero. We use  the {\bf AM} approach to explore the optimal solution of the problem {\bf (OP)}.\\[0.15in]
		{\bf Step-a:} At the beginning, we pick up $A_0$ 
		and solve the following optimization problem  with $A=A_0$:
		\begin{eqnarray}
			\mbox{{\bf (OP[$A$])}}: \left \{
			\begin{array}{ll}
				&\min_{P} \Big\|P A-{\cal J}P \Big\|^2_F\\
				& {\sf s.t.} \ \left\{
				\begin{array}{ll}
					& P {\bf 1}^\top ={\bf 1}^\top;  \\
					&\beta P\geq {\bf 0}.
				\end{array}
				\right.
			\end{array}
			\right. \label{july-15-4}
		\end{eqnarray}
		
		\noindent
		{\bf Step-b:} Let $P_0$ denote the solution of the optimization problem {\bf (OP[$A$])} given by (\ref{july-15-4}) with $A=A_0$. Then we solve another optimization problem with $P=P_0$:
		\begin{eqnarray}
			\mbox{{\bf (OP[$P$])}}: \left \{
			\begin{array}{ll}
				&\min_{A} \Big\|P A-{\cal J} P \Big\|^2_F \label{july-15-5}\\
				& {\sf s.t.} \ \left\{
				\begin{array}{ll}
					&A {\bf 1}^\top \leq {\bf 0};\\
					& -\xi\leq a_{ii}\leq 0 \ \mbox{for} \ 1\leq i\leq n;\\
					&0\leq a_{ij}\leq \xi \ \mbox{for} \ i\neq j, 1\leq i,j\leq n.
				\end{array}
				\right.
			\end{array}
			\right. \label{july-15-6}
		\end{eqnarray}
		
		\noindent
		{\bf Step-c:} Let $A_1$ be the solution to (\ref{july-15-6}). Then solve the optimization problem {\bf (OP[$A$])} given by  (\ref{july-15-4}) with replacing  $A$ by $A_1$, and its solution is denoted by $P_1$. Again, we solve the optimization problem {\bf (OP[$P$])} given by  (\ref{july-15-6}) with replacing $P$ by $P_1$, and denote its solution by $A_2$. Repeating {\bf Step-a} and {\bf Step-b}, we generate a sequence $\{(A_l,P_l): l\geq 0\}$. \\[-0.05in]

		Take $P$ into the ${\cal J}$-blocked  partition, and rewrite $\Big\|PA-{\cal J}P \Big\|^2_F$  as
		\begin{align}\label{equ_ob}
			\begin{aligned}
				\Big\|PA-{\cal J}P \Big\|^2_F=&\sum_{i=1}^\ell\Big\|P(\lambda_i)A- {\cal J}_{n_i}(\lambda_i)P(\lambda_i) \Big\|^2_F\\
				&+\sum_{j=\ell+1}^k\Big\|P(\mu_{j},\omega_j)A- {\cal J}_{n_j}(\mu_{j},\omega_j)P(\mu_{j},\omega_j) \Big\|^2_F.
			\end{aligned}
		\end{align}
		To make the above steps workable, we establish the following two lemmas whose proofs are relegated to the Appendix.
		To get an expression for $\Big\|PA-{\cal J}P \Big\|^2_F$, let
		\begin{align}
			&\overline{P}(\lambda_i)=\Big(P_1(\lambda_i),P_2(\lambda_i),\ldots,P_{n_i}(\lambda_i)\Big);\label{pbar1}\\
			&\overline{P}(\mu_{j},\omega_j)=\Big(P^{(1)}_1(\mu_{j},\omega_j),P^{(1)}_2(\mu_{j},\omega_j),\ldots,P^{(n_j)}_1(\mu_{j},\omega_j),P^{(n_j)}_2
			(\mu_{j},\omega_j)\Big).\label{pbar2}
		\end{align}
		That is, $\overline{P}(\lambda_i)$ is an $(n\times n_i)$-dimensional row vector containing all rows  of $P(\lambda_i)$. Similarly, $\overline{P}(\mu_{j},\omega_j)$ is an $(n\times 2n_j)$-dimensional row vector containing all row vectors of $P(\mu_{j},\omega_j)$.
		The first lemma is about the convexity on the objective functions in {\bf Step-a} and {\bf Step-b}.
		\begin{lemma} \label{obj-convexity}
			For any given $A=(a_{ij})_{n\times n}\in {\cal R}^{n\times n}$ ,
			$\Big\|PA-{\cal J}P \Big\|^2_F$
			is convex with respect to $n^2$ variables $\{p_{ij}: 1\leq i,j\leq n\}$ given by the entries of matrix $P=(p_{ij})_{n\times n}$. Similarly, for any given $P=(p_{ij})_{n\times n}\in {\cal R}^{n\times n}$, $\Big\|PA-{\cal J}P \Big\|^2_F$
			is convex with respect to $n^2$ variables $\{a_{ij}: 1\leq i,j\leq n\}$ given by the entries of matrix $A=(a_{ij})_{n\times n}$.
			Moreover,
			\begin{align}
				&\Big\|P(\lambda_i)A- {\cal J}_{n_i}(\lambda_i)P(\lambda_i) \Big\|^2_F=\overline{P}(\lambda_i)B_{\lambda_i}B_{\lambda_i}^\top\overline{P}(\lambda_i)^\top;\label{conv_pr}\\
				&\Big\|P(\mu_{j},\omega_j)A- {\cal J}_{n_j}(\mu_{j},\omega_j)P(\mu_{j},\omega_j) \Big\|^2_F=\overline{P}(\mu_{j},\omega_j)B_{(\mu_{j},\omega_j)}B_{(\mu_{j},\omega_j)}^\top\overline{P}(\mu_{j},\omega_j)^\top,\label{conv_pc}
			\end{align}
			where $B_{\lambda_i}$ is an $((nn_i)\times (nn_i))$-matrix
			\begin{align}\label{ob_exp}
				B_{\lambda_i}=
				\begin{bmatrix}
					A-\lambda_iI&-I & &\\
					& A-\lambda_iI &\ddots &\\
					& & \ddots & -I\\
					& & &A-\lambda_iI
				\end{bmatrix}
			\end{align}
			and $B_{(\mu_{j},\omega_j)}$ is a $((2nn_j)\times (2nn_j))$-matrix
			\begin{align}
				\begin{bmatrix}
					A_{(\mu_j,\omega_j)}&-I & &\\
					& A_{(\mu_j,\omega_j)} &\ddots &\\
					& & \ddots & -I\\
					& & &A_{(\mu_j,\omega_j)}
				\end{bmatrix}
				\mbox{with} \ \
				A_{(\mu_j,\omega_j)}=
				\begin{bmatrix}
					A-\mu_jI&-\omega_jI\\
					\omega_j I& A-\mu_jI
				\end{bmatrix}.\label{B-defin}
			\end{align}	
		\end{lemma}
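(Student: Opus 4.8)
The plan is to establish the closed-form expressions \eqref{conv_pr} and \eqref{conv_pc} first, and then to read off both convexity statements from them.

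First I would expand $P(\lambda_i)A-{\cal J}_{n_i}(\lambda_i)P(\lambda_i)$ one row at a time. Since ${\cal J}_{n_i}(\lambda_i)$ is lower bidiagonal with $\lambda_i$ on the diagonal and $1$ on the first subdiagonal, left multiplication by it sends the rows of $P(\lambda_i)$ to $\lambda_iP_1(\lambda_i)$ (first row) and $P_{r-1}(\lambda_i)+\lambda_iP_r(\lambda_i)$ (row $r$, $2\le r\le n_i$); hence the $r$-th row of $P(\lambda_i)A-{\cal J}_{n_i}(\lambda_i)P(\lambda_i)$ equals $P_1(\lambda_i)(A-\lambda_iI)$ when $r=1$ and $P_r(\lambda_i)(A-\lambda_iI)-P_{r-1}(\lambda_i)$ when $2\le r\le n_i$ (exactly the expressions appearing in \eqref{real_P}). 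Next, partition the row vector $\overline{P}(\lambda_i)$ from \eqref{pbar1} into $n_i$ consecutive length-$n$ blocks and recall the block-bidiagonal form \eqref{ob_exp} of $B_{\lambda_i}$: a direct block multiplication shows that the $r$-th length-$n$ block of $\overline{P}(\lambda_i)B_{\lambda_i}$ is precisely the $r$-th row of $P(\lambda_i)A-{\cal J}_{n_i}(\lambda_i)P(\lambda_i)$. Since $\|X\|_F^2$ equals the sum of the squared Euclidean norms of the rows of $X$, this yields
\[
\Big\|P(\lambda_i)A-{\cal J}_{n_i}(\lambda_i)P(\lambda_i)\Big\|_F^2=\big\|\overline{P}(\lambda_i)B_{\lambda_i}\big\|^2=\overline{P}(\lambda_i)B_{\lambda_i}B_{\lambda_i}^\top\overline{P}(\lambda_i)^\top,
\]
which is \eqref{conv_pr}. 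The complex case \eqref{conv_pc} is identical in spirit, the only difference being that rows must be grouped into consecutive pairs: using the $2\times 2$ rotation block $\Theta(\mu_j,\omega_j)$ and the identity blocks on the subdiagonal of ${\cal J}_{n_j}(\mu_j,\omega_j)$, the $r$-th pair of rows of $P(\mu_j,\omega_j)A-{\cal J}_{n_j}(\mu_j,\omega_j)P(\mu_j,\omega_j)$ equals $\big(P^{(r)}_1(\mu_j,\omega_j),P^{(r)}_2(\mu_j,\omega_j)\big)A_{(\mu_j,\omega_j)}$ minus the preceding pair (absent when $r=1$), with $A_{(\mu_j,\omega_j)}$ as in \eqref{B-defin} (compare \eqref{complex_P}); partitioning $\overline{P}(\mu_j,\omega_j)$ from \eqref{pbar2} into $n_j$ consecutive length-$2n$ blocks and multiplying by $B_{(\mu_j,\omega_j)}$ then matches these pairs exactly, and \eqref{conv_pc} follows as before.

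For the convexity claims, the quickest route is to note that $X\mapsto\|X\|_F^2$ is convex, while $P\mapsto PA-{\cal J}P$ is linear in the entries of $P$ (for fixed $A$) and $A\mapsto PA-{\cal J}P$ is affine in the entries of $A$ (for fixed $P$); composing these gives the asserted convexity in the $n^2$ entries of $P$, respectively of $A$. One can also see this directly from the formulas: by \eqref{equ_ob} together with \eqref{conv_pr} and \eqref{conv_pc}, for fixed $A$ the objective is a sum of quadratic forms $v\,(MM^\top)\,v^\top$ in pairwise disjoint groups of entries of $P$ (the blocks $\overline{P}(\lambda_i)$, $1\le i\le\ell$, and $\overline{P}(\mu_j,\omega_j)$, $\ell+1\le j\le k$, together exhaust all $n^2$ entries of $P$), and each Gram matrix $MM^\top$ is positive semidefinite; for fixed $P$, $\|PA-{\cal J}P\|_F^2={\sf tr}(A^\top P^\top PA)-2{\sf tr}(({\cal J}P)^\top PA)+{\sf tr}(({\cal J}P)^\top{\cal J}P)$, whose quadratic part ${\sf tr}(A^\top P^\top PA)\ge 0$ because $P^\top P$ is positive semidefinite.

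The only delicate point is the index bookkeeping in the complex case: one has to verify that the $2\times 2$ rotation blocks $\Theta(\mu_j,\omega_j)$ inside ${\cal J}_{n_j}(\mu_j,\omega_j)$ and the $2\times 2$ blocks of $A_{(\mu_j,\omega_j)}$ line up so that the length-$2n$ blocks of $\overline{P}(\mu_j,\omega_j)B_{(\mu_j,\omega_j)}$ really do reproduce the consecutive pairs of rows of $P(\mu_j,\omega_j)A-{\cal J}_{n_j}(\mu_j,\omega_j)P(\mu_j,\omega_j)$; once the real case is written out carefully, this reduces to a routine verification.
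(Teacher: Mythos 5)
Your proposal is correct and follows essentially the same route as the paper: expanding $P(\lambda_i)A-{\cal J}_{n_i}(\lambda_i)P(\lambda_i)$ row by row (and in consecutive row pairs for the complex blocks), matching these rows with the blocks of $\overline{P}(\lambda_i)B_{\lambda_i}$ and $\overline{P}(\mu_j,\omega_j)B_{(\mu_j,\omega_j)}$, and using that the squared Frobenius norm is the sum of squared row norms, while the convexity claims (dismissed as straightforward in the paper) are justified by your composition-with-a-linear-map and positive-semidefinite Gram matrix observations. No gaps.
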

		
		The next lemma shows us that  both the convex optimization problem {\bf (OP[$A$])} given by {\rm (\ref{july-15-4})} and the convex optimization problem {\bf (OP[$P$])} given by {\rm (\ref{july-15-6})} are well posed.
		
		\begin{lemma}\label{Alter-optimal-solution}
			For any given $A\in {\cal R}^{n\times n}$, the convex optimization problem {\bf (OP[$A$])} given by {\rm (\ref{july-15-4})} has an optimal solution if the set of the feasible solutions to its constraints is nonempty$;$ For any given $P\in {\cal R}^{n\times n}$, the convex optimization problem {\bf (OP[$P$])} given by {\rm (\ref{july-15-6})} has an optimal solution.
		\end{lemma}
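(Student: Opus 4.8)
The plan is to establish the two existence claims separately, because the feasible region of \textbf{(OP[$P$])} given by (\ref{july-15-6}) is compact whereas that of \textbf{(OP[$A$])} given by (\ref{july-15-4}) is in general an unbounded polyhedron.

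Consider first \textbf{(OP[$P$])}. Its feasible set is
\[
\Big\{A=(a_{ij})_{n\times n}\in{\cal R}^{n\times n}: A{\bf 1}^\top\leq{\bf 0},\ -\xi\leq a_{ii}\leq 0\ (1\leq i\leq n),\ 0\leq a_{ij}\leq\xi\ (i\neq j)\Big\}.
\]
This set is closed, being an intersection of finitely many closed half-spaces; it is bounded, since every entry is confined to $[-\xi,\xi]$; and it is nonempty, because $A={\bf 0}$ satisfies all the constraints (here $\xi\geq 0$, as all poles of ${\cal L}(s)$ have nonpositive real parts). For a fixed $P$, the map $A\mapsto\|PA-{\cal J}P\|^2_F$ is continuous on ${\cal R}^{n\times n}$ (indeed a convex quadratic, by Lemma \ref{obj-convexity}). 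By the Weierstrass theorem it attains its minimum on a nonempty compact set, so \textbf{(OP[$P$])} has an optimal solution; note that no extra hypothesis is needed here, matching the statement of the lemma.

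Now consider \textbf{(OP[$A$])} with a fixed $A$. Its feasible set $\{P: P{\bf 1}^\top={\bf 1}^\top,\ \beta P\geq{\bf 0}\}$ is a polyhedron, nonempty by assumption, but typically unbounded, so Weierstrass does not apply directly. The two key facts are that the objective $f(P):=\|PA-{\cal J}P\|^2_F$ is nonnegative for every $P$ and hence bounded below on the feasible set, and that by Lemma \ref{obj-convexity} (see the block expressions (\ref{conv_pr})--(\ref{conv_pc})) it is a convex quadratic function of the $n^2$ entries of $P$. Since a quadratic function that is bounded below on a nonempty polyhedron attains its infimum there (the Frank--Wolfe theorem for quadratic programs), applying this to $f$ over the above polyhedron yields an optimal solution of \textbf{(OP[$A$])}.

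The only genuine obstacle is precisely the possible unboundedness of this polyhedron: a minimizing sequence need not stay bounded. The resolution, which is what the Frank--Wolfe argument encodes, is a recession-cone analysis: along a recession direction $D$ of the feasible set one has $D{\bf 1}^\top={\bf 0}$ and $\beta D\geq{\bf 0}$, and $f(P+tD)=\|(PA-{\cal J}P)+t(DA-{\cal J}D)\|^2_F$ stays bounded in $t$ only when $DA={\cal J}D$, in which case $f$ is actually constant along that direction. Hence a minimizing sequence can be retracted into a bounded slice of the feasible set without increasing the objective, and extracting a convergent subsequence produces the desired minimizer. If one prefers to avoid quoting Frank--Wolfe, this recession argument, together with the positive-semidefinite block structure in (\ref{conv_pr})--(\ref{conv_pc}), gives a self-contained proof; the routine parts (convexity and continuity of the objective, closedness and boundedness of the box-constrained set) are immediate from Lemma \ref{obj-convexity}.
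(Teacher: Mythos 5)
Your proof is correct, and for the harder half it takes a genuinely different route from the paper. For \textbf{(OP[$P$])} you argue exactly as the paper does (the box constraints make the feasible set compact, the objective is continuous, Weierstrass applies), with the useful extra remark that $A={\bf 0}$ shows nonemptiness. For \textbf{(OP[$A$])}, however, the paper does not invoke the Frank--Wolfe theorem: it writes the objective as the quadratic form $\overline{P}BB^\top\overline{P}^\top$ with $B={\sf diag}(B_{\lambda_1},\ldots,B_{(\mu_k,\omega_k)})$ from Lemma \ref{obj-convexity} and then applies the solution-existence criterion of Corollary 2.1 in Lee et al.\ \cite{lee2005quadratic}, which requires checking that every recession direction $V$ of the feasible polyhedron ($V{\bf 1}^\top={\bf 0}$, $\beta V\geq{\bf 0}$) with $\overline{V}BB^\top\overline{V}^\top=0$ satisfies $\overline{P}BB^\top\overline{V}^\top\geq 0$ for all feasible $P$; this follows because $\overline{V}BB^\top\overline{V}^\top=0$ forces $B^\top\overline{V}^\top={\bf 0}$. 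Your argument shortcuts that verification entirely: the objective is a squared Frobenius norm, hence bounded below by zero on the nonempty polyhedron, and Frank--Wolfe immediately gives attainment; your recession-direction sketch ($f(P+tD)$ is either constant or tends to infinity according to whether $DA={\cal J}D$) is essentially the same computation the paper performs inside the Lee et al.\ criterion, so the two proofs are close in spirit. What your route buys is brevity and independence from the block matrices $B$; what the paper's route buys is uniformity of machinery, since the same reference \cite{lee2005quadratic} (AVI solution sets, Theorems 4.2 and 7.6) is reused in Proposition \ref{prop-ubound} to get the uniform boundedness needed for the convergence proof of Theorem \ref{KKT}. Either way, your existence claims and the hypotheses under which you prove them match the lemma as stated.
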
	
		
		Note that if $\beta$ given by Proposition \ref{prop_beta} leads to no feasible solution to the convex optimization problem {\bf (OP[$A$])} given by (\ref{july-15-4}), then the (in)equalities \eqref{equ_sys} for such $\beta$ in Theorem \ref{iff} has no feasible solution. This gives that the associated rational function $ {\cal L}(s)$ given by \eqref{prob1} has no $n$-order continuous-time PH representation.
		
		To verify the {\bf AM} approach workable to the non-convex optimization problem {\bf (OP)}, following the procedure of the {\bf AM}, we need to establish the convergence of $\{(A_l,P_l): l\geq 0\}$ and that the corresponding limit is a  {\it critical point}. Here a critical point for problem $\min\{g(x),x\in X\}$ is a point $x\in X$ such that
		$\bigtriangledown g(x)^\top(y-x)\geq 0$ for each $y\in X$, where $\bigtriangledown g(x)$ denotes the gradient of $g$ at $x$.

		To this end, we first establish the boundedness of the optimal solution to {\bf (OP[$A$])} given by (\ref{july-15-4}) for any given $A \in {\cal R}^{n\times n}$ satisfying the constraints in \eqref{july-15-6}.
		\begin{proposition}\label{prop-ubound}
			For any given $A \in {\cal R}^{n\times n}$ satisfying the constraints in \eqref{july-15-6}, the optimal solution to {\bf (OP[$A$])} given by {\rm (\ref{july-15-4})}  is uniformly bounded.
		\end{proposition}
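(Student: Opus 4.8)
The plan is to argue by compactness and contradiction, reducing the assertion to a rigidity fact about the intertwining equation $QA={\cal J}Q$ that is powered by Propositions \ref{prop_Jordan} and \ref{prop_beta}. I would begin with two elementary observations. First, the set ${\cal A}:=\{A\in{\cal R}^{n\times n}:A{\bf 1}^\top\le{\bf 0},\ -\xi\le a_{ii}\le 0\ (1\le i\le n),\ 0\le a_{ij}\le\xi\ (i\ne j)\}$ of matrices satisfying the constraints in \eqref{july-15-6} is closed and bounded, hence compact. Second, the feasible set $F:=\{P\in{\cal R}^{n\times n}:P{\bf 1}^\top={\bf 1}^\top,\ \beta P\ge{\bf 0}\}$ of {\bf (OP[$A$])} does not depend on $A$; if $F=\emptyset$ there is nothing to prove, so I fix some $P^0\in F$, and then for every $A\in{\cal A}$ and every optimal solution $P^*_A$ of {\bf (OP[$A$])} one has the uniform value bound $\|P^*_A A-{\cal J}P^*_A\|_F^2\le\|P^0 A-{\cal J}P^0\|_F^2\le C$, where $C:=\big(\|P^0\|_F(\sup_{A\in{\cal A}}\|A\|_F+\|{\cal J}\|_F)\big)^2<\infty$.

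The crux is the following rigidity claim: \emph{for any $A\in{\cal R}^{n\times n}$, the only $Q\in{\cal R}^{n\times n}$ with $QA={\cal J}Q$, $Q{\bf 1}^\top={\bf 0}^\top$ and $\beta Q\ge{\bf 0}$ is $Q={\bf 0}$.} To prove it, observe that $\beta Q\ge{\bf 0}$ together with $(\beta Q){\bf 1}^\top=\beta(Q{\bf 1}^\top)=0$ forces $\beta Q={\bf 0}$. From $QA={\cal J}Q$ the column space of $Q$ is ${\cal J}$-invariant, and $\beta Q={\bf 0}$ says every column of $Q$ lies in the hyperplane $\{v:\beta v=0\}$; complexifying, the column space of $Q$ is a ${\cal J}$-invariant subspace of $\{v\in{\cal C}^n:\beta v=0\}$. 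If $Q\ne{\bf 0}$ this subspace is nonzero, hence contains an eigenvector $w$ of ${\cal J}$. By Proposition \ref{prop_Jordan} each eigenvalue of ${\cal J}$ has a single Jordan block, so $w$ is a scalar multiple of the unique eigenvector of that block: for a real block ${\cal J}_{n_i}(\lambda_i)$ it is the basis vector indexing the last coordinate of the block, so $\beta w=c\,\beta_{n_i}(\lambda_i)$; for a conjugate-pair block ${\cal J}_{n_j}(\mu_j,\omega_j)$ it lies in the last $\Theta(\mu_j,\omega_j)$-block and equals $c\big(e_1^{(n_j)}\mp\imath\, e_2^{(n_j)}\big)$, where $e_1^{(n_j)},e_2^{(n_j)}$ index that $2\times2$ block and carry $\beta$-entries $\beta^{(n_j)}_1(\mu_j,\omega_j)$ and $\beta^{(n_j)}_2(\mu_j,\omega_j)$, so $|\beta w|^2=|c|^2\big((\beta^{(n_j)}_1(\mu_j,\omega_j))^2+(\beta^{(n_j)}_2(\mu_j,\omega_j))^2\big)$. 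Either way $\beta w\ne 0$ by Proposition \ref{prop_beta}, contradicting $w\in\{v:\beta v=0\}$; hence $Q={\bf 0}$.

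Finally I would combine the two ingredients. If the optimal solutions were not uniformly bounded, there would exist $A_t\in{\cal A}$ and optimal $P_t$ of {\bf (OP[$A_t$])} with $\rho_t:=\|P_t\|_F\to\infty$. Passing to a subsequence, $A_t\to\bar A\in{\cal A}$ and $\bar Q_t:=P_t/\rho_t\to\bar Q$ with $\|\bar Q\|_F=1$. Feasibility of $P_t$ gives $\bar Q_t{\bf 1}^\top={\bf 1}^\top/\rho_t\to{\bf 0}^\top$ and $\beta\bar Q_t\ge{\bf 0}$, hence in the limit $\bar Q{\bf 1}^\top={\bf 0}^\top$ and $\beta\bar Q\ge{\bf 0}$; the uniform value bound gives $\|\bar Q_t A_t-{\cal J}\bar Q_t\|_F\le\sqrt C/\rho_t\to 0$, hence $\bar Q\bar A={\cal J}\bar Q$. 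Thus $\bar Q$ meets the hypotheses of the rigidity claim with $A=\bar A$, so $\bar Q={\bf 0}$, contradicting $\|\bar Q\|_F=1$; therefore the optimal solutions of {\bf (OP[$A$])} are uniformly bounded over $A\in{\cal A}$. I expect the rigidity claim to be the main obstacle — in particular pinning down the eigenvector of the real-companion block ${\cal J}_{n_j}(\mu_j,\omega_j)$ and matching it against the nonvanishing of $\big(\beta^{(n_j)}_1(\mu_j,\omega_j),\beta^{(n_j)}_2(\mu_j,\omega_j)\big)$ from Proposition \ref{prop_beta}; the compactness bookkeeping is routine, and the same argument incidentally shows that $f_A$ is coercive on $F$, reproving the first half of Lemma \ref{Alter-optimal-solution}.
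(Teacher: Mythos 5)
Your proof is correct, and it reaches the conclusion by a genuinely different route than the paper. The paper splits the argument in two: for fixed $A$ it invokes Theorem 4.2 of Lee et al.\ to represent an unbounded solution set of the convex QP by a ray $\{P+tQ\}$, extracts from the vanishing quadratic terms the block equations $\overline{Q}(\lambda_i)B_{\lambda_i}={\bf 0}$, $\overline{Q}(\mu_j,\omega_j)B_{(\mu_j,\omega_j)}={\bf 0}$, and then does a careful bookkeeping of left Jordan chains of $A$ (the indices $\sigma_i,\zeta_j,\xi_j$) to conclude that the nonzero rows of $Q$ are linearly independent generalized eigenvectors of $A$, which is incompatible with $\beta Q={\bf 0}$ and the nonvanishing of the chain-end entries of $\beta$ from Proposition \ref{prop_beta}; uniformity in $A$ is then obtained separately by recasting {\bf (OP[$A_l$])} as an affine variational inequality and citing Corollary 5.2 and Theorem 7.6 of Lee et al. You instead handle both the fixed-$A$ and the uniform-in-$A$ statements in one stroke: compactness of the constraint set for $A$, a uniform value bound via a fixed feasible $P^0$, a normalization $P_t/\|P_t\|_F$ and passage to the limit produce a nonzero $\bar Q$ with $\bar Q\bar A={\cal J}\bar Q$, $\bar Q{\bf 1}^\top={\bf 0}^\top$, $\beta\bar Q\ge{\bf 0}$, and your rigidity lemma kills it. Your rigidity proof is also structurally different from (and lighter than) the paper's chain argument: you use that the column space of $\bar Q$ is ${\cal J}$-invariant and contained in $\ker\beta$, so after complexification it would contain an eigenvector of ${\cal J}$, which by the single-block structure of ${\cal J}$ and the explicit eigenvectors of ${\cal J}_{n_i}(\lambda_i)$ and ${\cal J}_{n_j}(\mu_j,\omega_j)$ pairs nontrivially with $\beta$ by Proposition \ref{prop_beta} — the same nonvanishing the paper uses, but applied to eigenvectors of ${\cal J}$ rather than to Jordan chains of $A$. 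What your approach buys: no reliance on the QP/AVI machinery of Lee et al.\ (their Theorems 4.2, 7.6 and Corollary 5.2), no chain-index case analysis, and the incidental coercivity observation; what the paper's approach buys is that the fixed-$A$ boundedness is tied directly to the structure of the ray directions of the QP solution set, which is the form in which the convergence reference \cite{grippo2000convergence} is later applied.
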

		
		\begin{proof}
			Denote by {\sf Sol}{\bf (OP[$A$])} the solution set of {\bf (OP[$A$])}. We first show that fixed $A$, {\sf Sol}{\bf (OP[$A$])} is bounded.
			From lemma \ref{obj-convexity},	{\bf (OP[$A$])} is a quadratic convex optimization. So its optimal solution set is the same as its local optimal solution set.
			If {\sf Sol}{\bf (OP[$A$])} is unbounded, by Theorem 4.2 in Lee et al. \cite{lee2005quadratic}, there exist $P\in {\cal R}^{n\times n}$ and  $Q \in {\cal R}^{n\times n}$ with $Q\neq {\bf 0}$ such that $\{P+t Q, t\geq 0\} \in$ {\sf Sol}{\bf (OP[$A$])}.
			Then $P$ and $Q$ must satisfy
			\begin{align}\label{sol_1}
				\begin{aligned}
					&P {\bf 1}^\top={\bf 1}^\top, \quad (P+t Q){\bf 1}^\top={\bf 1}^\top,\\
					&\beta P\geq {\bf 0}, \quad \beta(P+t Q)\geq {\bf 0},
				\end{aligned}
			\end{align}
			and
			\begin{align}\label{sol_2}
				\|PA-{\cal J}P\|^2_F = \|(P+t Q)A-{\cal J}(P+t Q)\|^2_F
			\end{align}
			for $t\geq 0$. Since $t$ is arbitrary nonnegative real number,
			\eqref{sol_1} implies that $Q{\bf 1}^\top={\bf 0}$ and $\beta Q\geq {\bf 0}$. Thus we have $\beta Q{\bf 1}^\top=0$. This together with  $\beta Q\geq {\bf 0}$ gives us $\beta Q={\bf 0}$.
			We make ${\cal J}$-blocked partition for $Q$, and similar to (\ref{pbar1})-(\ref{pbar2}), let
			\begin{align*}
				&\overline{Q}(\lambda_i)=\Big(Q_1(\lambda_i),Q_2(\lambda_i),\ldots,Q_{n_i}(\lambda_i)\Big);\\	&\overline{Q}(\mu_{j},\omega_j)=\Big(Q^{(1)}_1(\mu_{j},\omega_j),Q^{(1)}_2(\mu_{j},\omega_j),\ldots,Q^{(n_j)}_1(\mu_{j},\omega_j),
				Q^{(n_j)}_2(\mu_{j},\omega_j)\Big).
			\end{align*}
			By \eqref{equ_ob}-\eqref{conv_pc}, \eqref{sol_2} gives that for $t\geq 0$,
			\begin{align*}	&\sum_{i=1}^\ell\overline{P}(\lambda_i)B_{\lambda_i}B_{\lambda_i}^\top\overline{P}(\lambda_i)^\top+\sum_{j=\ell+1}^k\overline{P}(\mu_{j},\omega_j)
				B_{(\mu_{j},\omega_j)}B_{(\mu_{j},\omega_j)}^\top\overline{P}(\mu_{j},\omega_j)^\top\\ &=\sum_{i=1}^\ell\Big(\overline{P}(\lambda_i)+t\overline{Q}(\lambda_i)\Big)B_{\lambda_i}B_{\lambda_i}^\top
				\Big(\overline{P}(\lambda_i)+t\overline{Q}(\lambda_i)\Big)^\top\\
				&\quad +\sum_{j=\ell+1}^k\Big(\overline{P}(\mu_{j},\omega_j)+t\overline{Q}(\mu_{j},\omega_j)\Big)B_{(\mu_{j},\omega_j)}B_{(\mu_{j},\omega_j)}^\top
				\Big(\overline{P}(\mu_{j},\omega_j)+t\overline{Q}(\mu_{j},\omega_j)\Big)^\top.
			\end{align*}
			It can be directly calculated that
			\begin{align*} &t^2\left(\sum_{i=1}^\ell\overline{Q}(\lambda_i)B_{\lambda_i}B_{\lambda_i}^\top\overline{Q}(\lambda_i)^\top
				+\sum_{j=\ell+1}^k\overline{Q}(\mu_{j},\omega_j)B_{(\mu_{j},\omega_j)}B_{(\mu_{j},\omega_j)}^\top\overline{Q}(\mu_{j},\omega_j)^\top\right)\\ &+2t\left(\sum_{i=1}^\ell\overline{P}(\lambda_i)B_{\lambda_i}B_{\lambda_i}^\top\overline{Q}(\lambda_i)^\top
				+\sum_{j=\ell+1}^k\overline{P}(\mu_{j},\omega_j)B_{(\mu_{j},\omega_j)}B_{(\mu_{j},\omega_j)}^\top\overline{Q}(\mu_{j},\omega_j)^\top\right)=0
			\end{align*}
			for $t\geq 0$. Since $t$ is arbitrary nonnegative real number, we have
			\begin{align*}
				\sum_{i=1}^\ell\overline{Q}(\lambda_i)B_{\lambda_i}B_{\lambda_i}^\top\overline{Q}(\lambda_i)^\top
				+\sum_{j=\ell+1}^k\overline{Q}(\mu_{j},\omega_j)B_{(\mu_{j},\omega_j)}B_{(\mu_{j},\omega_j)}^\top\overline{Q}(\mu_{j},\omega_j)^\top=0.
			\end{align*}
			Note that each term is nonnegative. Thus for $1\leq i\leq \ell$ and $\ell+1\leq j\leq k$,
			\begin{align*}
				&\overline{Q}(\lambda_i)B_{\lambda_i}B_{\lambda_i}^\top\overline{Q}(\lambda_i)^\top=0,\\
				&\overline{Q}(\mu_{j},\omega_j)B_{(\mu_{j},\omega_j)}B_{(\mu_{j},\omega_j)}^\top\overline{Q}(\mu_{j},\omega_j)^\top=0.
			\end{align*}
			Equivalently,
			\begin{align}
				&\overline{Q}(\lambda_i)B_{\lambda_i}={\bf 0},\label{equ:qr}\\
				&\overline{Q}(\mu_{j},\omega_j)B_{(\mu_{j},\omega_j)}={\bf 0}.\label{equ:qc}
			\end{align}
			We first focus on equation \eqref{equ:qr}, by the definition of $B_{\lambda_i}$ in (\ref{ob_exp}),
			\begin{align*}
				\overline{Q}(\lambda_i)B_{\lambda_i}=\Big(Q_1(\lambda_i),Q_2(\lambda_i),\ldots,Q_{n_i}(\lambda_i)\Big)\cdot
				\begin{bmatrix}
					A-\lambda_iI&-I & &\\
					& A-\lambda_iI &\ddots &\\
					& & \ddots & -I\\
					& & &A-\lambda_iI
				\end{bmatrix}= {\bf 0}.
			\end{align*}
			This gives that
			\begin{align*}
				&Q_1(\lambda_i)\cdot(A-\lambda_iI)={\bf 0},\\
				&Q_r(\lambda_i)\cdot(A-\lambda_iI)=Q_{r-1}(\lambda_i), \quad 2\leq r\leq n_i.
			\end{align*}
			Note that if there exists $r$ such that $Q_r(\lambda_i)={\bf 0}$, then $Q_l(\lambda_i)={\bf 0}$ for all $l\leq r$.
			If $Q_{n_i}(\lambda_i)\neq {\bf 0}$, let
			$
			\sigma_i=\min\{r:Q_r(\lambda_i)\neq {\bf 0}\}.
			$ 
			Otherwise, let $\sigma_i=n_i+1$.
			Hence, when $\sigma_i\leq n_i$, we have $Q_l(\lambda_i)={\bf 0}$ for all $l< \sigma_i$ and $Q_l(\lambda_i)\neq{\bf 0}$ for all $\sigma_i\leq l\leq n_i$.
			
			For equation \eqref{equ:qc}, by the definition of $B_{(\mu_j,\omega_j)}$ in (\ref{B-defin}),
			\begin{align*}
				\overline{Q}(\mu_{j},\omega_j)B_{(\mu_{j},\omega_j)}
				&=\Big(Q^{(1)}_1(\mu_{j},\omega_j),Q^{(1)}_2(\mu_{j},\omega_j),\ldots, Q^{(n_j)}_1(\mu_{j},\omega_j)_{(1)},Q^{(n_j)}_2(\mu_{j},\omega_j)_{(2)}\Big)\\
				& \ \ \ \ \times
				\begin{bmatrix}
					A_{(\mu_j,\omega_j)}&-I & &\\
					& A_{(\mu_j,\omega_j)} &\ddots &\\
					& & \ddots & -I\\
					& & &A_{(\mu_j,\omega_j)}
				\end{bmatrix}= {\bf 0}.
			\end{align*}
			Equivalently,	
			\begin{align*}
				&Q^{(1)}_1(\mu_{j},\omega_j)\cdot(A-\mu_jI)+\omega_jQ^{(1)}_2(\mu_{j},\omega_j)={\bf 0},\\
				&Q^{(1)}_2(\mu_{j},\omega_j)\cdot(A-\mu_jI)-\omega_jQ^{(1)}_1(\mu_{j},\omega_j)={\bf 0},\\
				&Q^{(r)}_1(\mu_{j},\omega_j)\cdot(A-\mu_jI)+\omega_jQ^{(r)}_2(\mu_{j},\omega_j)
				=Q^{(r-1)}_1(\mu_{j},\omega_j),\quad 2\leq r\leq n_j,\\
				&Q^{(r)}_2(\mu_{j},\omega_j)\cdot(A-\mu_jI)-\omega_jQ^{(r)}_1(\mu_{j},\omega_j)=Q^{(r-1)}_2(\mu_{j},\omega_j),\quad 2\leq r\leq n_j.
			\end{align*}	
			Letting the first equation plus the second equation multiplying $\imath$, we obtain the following first equation; Letting the first equation minus the second equation multiplying $\imath$, we obtain the following second equation. Similarly, for $2\leq r\leq n_j$, letting the third equation plus the last equation multiplying $\imath$, we have the following third equation; Letting the third equation minus the last equation multiplying $\imath$, we have the following last equation:
			\begin{align}\label{equ:c_mul}
				\begin{aligned}
					&\Big(Q^{(1)}_1(\mu_{j},\omega_j)+\imath Q^{(1)}_2(\mu_{j},\omega_j)\Big)\cdot(A-\mu_jI-\imath \omega_jI)={\bf 0},\\
					&\Big(Q^{(1)}_1(\mu_{j},\omega_j)-\imath Q^{(1)}_2(\mu_{j},\omega_j)\Big)\cdot(A-\mu_jI+\imath \omega_jI)={\bf 0},\\
					&\Big(Q^{(r)}_1(\mu_{j},\omega_j)+\imath Q^{(r)}_2(\mu_{j},\omega_j)\Big)\cdot(A-\mu_jI-\imath \omega_jI)\\
					&\quad\quad=Q^{(r-1)}_1(\mu_{j},\omega_j)+\imath Q^{(r-1)}_2(\mu_{j},\omega_j), \ 2\leq r\leq n_j,\\
					&\Big(Q^{(r)}_1(\mu_{j},\omega_j)-\imath Q^{(r)}_2(\mu_{j},\omega_j)\Big)\cdot(A-\mu_jI+\imath \omega_jI)\\
					&\quad\quad=Q^{(r-1)}_1(\mu_{j},\omega_j)-\imath Q^{(r-1)}_2(\mu_{j},\omega_j),\ 2\leq r\leq n_j.
				\end{aligned}
			\end{align}	
			If $Q^{(n_j)}_l(\mu_{j},\omega_j)\neq {\bf 0}$ for $l=1,2$, let
			$$
			\zeta_{j}=\min\{r:Q^{(r)}_1(\mu_{j},\omega_j)\neq {\bf 0}\},\  \
			\xi_j=\min\{r:Q^{(r)}_2(\mu_{j},\omega_j)\neq {\bf 0}\}. $$
			Otherwise, 	if $Q^{(n_j)}_1(\mu_{j},\omega_j)= {\bf 0}$, let $\zeta_{j}=n_j+1$, and if $Q^{(n_j)}_2(\mu_{j},\omega_j)= {\bf 0}$, let $\xi_{j}=n_j+1$.
			We first claim that $\zeta_{j}=\xi_j$. If it is not true, assume that $\zeta_{j}>\xi_j$. Using (\ref{equ:c_mul}) with $r=\xi_j$, and by
			$Q^{(\xi_j)}_1(\mu_{j},\omega_j)=0$ as $\zeta_{j}>\xi_j$,
			\begin{align*}
				\imath Q^{(\xi_j)}_2(\mu_{j},\omega_j)\cdot(A-\mu_jI-\imath \omega_jI)={\bf 0},\ \
				-\imath Q^{(\xi_j)}_2(\mu_{j},\omega_j)\cdot(A-\mu_jI+\imath \omega_jI)={\bf 0}.
			\end{align*}
			Add up these two equations, $\omega_j\neq 0$ implies that $Q^{(\xi_j)}_2(\mu_{j},\omega_j)={\bf 0}.$ It contradicts the definition of $\xi_j$. Conversely, if $\zeta_{j}<\xi_j$, the proof is similar.	
			
			Now we show that $Q^{(r)}_1(\mu_{j},\omega_j)=Q^{(r)}_2(\mu_{j},\omega_j)={\bf 0}$ for $r<\zeta_j$, and $Q^{(r)}_1(\mu_{j},\omega_j)\neq{\bf 0}$, $Q^{(r)}_2(\mu_{j},\omega_j)\neq{\bf 0}$ for $\zeta_j\leq r\leq n_j$.
			The former is obvious by the definition of $\xi_j$. We only show the latter. If there exists $\zeta_j< r\leq n_j$ such that $Q^{(r)}_1(\mu_{j},\omega_j)={\bf 0}$ or $Q^{(r)}_2(\mu_{j},\omega_j)={\bf 0}$, let
			\begin{align*}
				\zeta_{j}^0=\min\{l>\zeta_j:Q^{(l)}_1(\mu_{j},\omega_j)= {\bf 0}\},\ \
				\xi_j^0=\min\{l>\zeta_j:Q^{(l)}_2(\mu_{j},\omega_j)= {\bf 0}\}.
			\end{align*} 
			We first show that $\zeta_{j}^0=\xi_j^0$. Otherwise, if $\zeta_{j}^0>\xi_j^0$, from \eqref{equ:c_mul},
			\begin{align*}
				&Q^{(\xi_j^0)}_1(\mu_{j},\omega_j)\cdot(A-\mu_jI-\imath \omega_jI)=Q^{(\xi_j^0-1)}_1(\mu_{j},\omega_j)+\imath Q^{(\xi_j^0-1)}_2(\mu_{j},\omega_j),\\
				&Q^{(\xi_j^0)}_1(\mu_{j},\omega_j)\cdot(A-\mu_jI+\imath \omega_jI)=Q^{(\xi_j^0-1)}_1(\mu_{j},\omega_j)-\imath Q^{(\xi_j^0-1)}_2(\mu_{j},\omega_j),\\
				&\Big(Q^{(\xi_j^0-1)}_1(\mu_{j},\omega_j)+\imath Q^{(\xi_j^0-1)}_2(\mu_{j},\omega_j)\Big)\cdot(A-\mu_jI-\imath \omega_jI)^{\xi_j^0-1-\zeta_j}\\
				&\quad\quad=Q^{(\zeta_j)}_1(\mu_{j},\omega_j)+\imath Q^{(\zeta_j)}_2(\mu_{j},\omega_j),\\
				&\Big(Q^{(\xi_j^0-1)}_1(\mu_{j},\omega_j)-\imath Q^{(\xi_j^0-1)}_2(\mu_{j},\omega_j)\Big)\cdot(A-\mu_jI+\imath \omega_jI)^{\xi_j^0-1-\zeta_j}\\
				&\quad\quad=Q^{(\zeta_j)}_1(\mu_{j},\omega_j)-\imath Q^{(\zeta_j)}_2(\mu_{j},\omega_j),\\
				&\Big(Q^{(\zeta_j)}_1(\mu_{j},\omega_j)+\imath Q^{(\zeta_j)}_2(\mu_{j},\omega_j)\Big)\cdot(A-\mu_jI-\imath \omega_jI)={\bf 0},\\
				&\Big(Q^{(\zeta_j)}_1(\mu_{j},\omega_j)-\imath Q^{(\zeta_j)}_2(\mu_{j},\omega_j)\Big)\cdot(A-\mu_jI+\imath \omega_jI)={\bf 0}.
			\end{align*}
			Note that $Q^{(l)}_1(\mu_{j},\omega_j)\neq {\bf 0}$ and $Q^{(l)}_2(\mu_{j},\omega_j)\neq {\bf 0}$ for $\zeta_j\leq l\leq \xi_j^0-1$. This implies that $Q^{(\xi_j^0)}_1(\mu_{j},\omega_j)$ is the generalized eigenvector
			corresponding to both $\mu_j+\imath \omega_j$ and $\mu_j-\imath \omega_j$. It leads to a contradiction. Conversely, if $\zeta_{j}^0<\xi_j^0$, the proof is similar.
			
			Now we have $\zeta_{j}^0=\xi_j^0$. This together with \eqref{equ:c_mul} gives that $$Q^{(r)}_1(\mu_{j},\omega_j)=Q^{(r)}_2(\mu_{j},\omega_j)={\bf 0},\ r<\zeta_j^0.$$  It leads to a contradiction.
			Then we obtain that $Q^{(r)}_1(\mu_{j},\omega_j)\neq{\bf 0}$, $Q^{(r)}_2(\mu_{j},\omega_j)\neq{\bf 0}$ for $\zeta_j\leq r\leq n_j$.
			
			Let
			${\cal N}=\{1\leq i \leq \ell:\sigma_i\leq n_i\}\cup\{\ell+1\leq j\leq k: \zeta_j\leq n_j\}.$ 
			It follows from $Q\neq {\bf 0}$ that ${\cal N}$ is not empty. In summary, for  $i,j\in {\cal N}$,
			\begin{align*}
				&Q_{\sigma_i}(\lambda_i)\cdot(A-\lambda_iI)={\bf 0},\\
				&Q_r(\lambda_i)\cdot(A-\lambda_iI)=Q_{r-1}(\lambda_i), \quad \sigma_i+1\leq r\leq n_i
			\end{align*}
			and
			\begin{align*}
				&\Big(Q^{(\zeta_j)}_1(\mu_{j},\omega_j)+\imath Q^{(\zeta_j)}_2(\mu_{j},\omega_j)\Big)\cdot(A-\mu_jI-\imath \omega_jI)={\bf 0},\\
				&\Big(Q^{(\zeta_j)}_1(\mu_{j},\omega_j)-\imath Q^{(\zeta_j)}_2(\mu_{j},\omega_j)\Big)\cdot(A-\mu_jI+\imath \omega_jI)={\bf 0},\\
				&\Big(Q^{(r)}_1(\mu_{j},\omega_j)+\imath Q^{(r)}_2(\mu_{j},\omega_j)\Big)\cdot(A-\mu_jI-\imath \omega_jI)\\
				&\quad \quad=Q^{(r-1)}_1(\mu_{j},\omega_j)+\imath Q^{(r-1)}_2(\mu_{j},\omega_j),\ \zeta_j+1\leq r\leq n_j,\\
				&\Big(Q^{(r)}_1(\mu_{j},\omega_j)-\imath Q^{(r)}_2(\mu_{j},\omega_j)\Big)\cdot(A-\mu_jI+\imath \omega_jI)\\
				&\quad \quad=Q^{(r-1)}_1(\mu_{j},\omega_j) -\imath Q^{(r-1)}_2(\mu_{j},\omega_j) , \ \zeta_j+1\leq r\leq n_j.
			\end{align*}	
			It follows that for matrix $A$, $Q_{\sigma_i}(\lambda_i)$ is the eigenvector related to $\lambda_i$ and $Q_r(\lambda_i)$ is the generalized eigenvector related to $\lambda_i$ for $\sigma_i+1\leq r\leq n_i$; $Q^{(\zeta_j)}_1(\mu_{j},\omega_j)\pm \imath Q^{(\zeta_j)}_2(\mu_{j},\omega_j)$ are the eigenvectors related to $\mu_j\pm \imath\omega_j$ and $Q^{(r)}_1(\mu_{j},\omega_j)\pm \imath Q^{(r)}_2(\mu_{j},\omega_j)$	are the generalized eigenvectors related to $\mu_j\pm \imath \omega_j$ for $\zeta_j+1\leq r\leq n_j$. Thus, they are linearly independent.
			This contradicts $\beta Q={\bf 0}$ from Proposition \ref{prop_beta}. Therefore, such $Q$ does not exist. This implies that the optimal solution  set to {\bf (OP[$A$])} is bounded.

			Now we show that {\sf Sol}{\bf (OP[$A$])} is uniformly bounded for $A$ satisfying constraints in \eqref{july-15-6}.
			If it is not true, there must exist $\{A_l: \ l\geq 1\}$ such that
			\begin{align}\label{p_inf}
				\sup_{P_l\in {\sf Sol} \mbox{{\bf (OP[$A_l$])} }} \big \|P_l\big \|_F^2 \rightarrow \infty.
			\end{align}
			Note that $\{A_l: \ l\geq 1\}$ belongs to a compact set. Without loss of generality, we assume that $\{A_l: \ l\geq 1\}$ converges to $A_\infty$. By equation \eqref{equ_ob}-\eqref{conv_pc}, for each {\bf (OP[$A_l$])} with $1\leq l\leq \infty$, its objective function is quadratic and the corresponding Hessian matrix is positive semi-definite. Consider the following affine variational
			inequality problem denoted by {\bf (AVI[$A_l$])}:
			\begin{eqnarray*}
				\left\{
				\begin{array}{lll}
					&\mbox{Find $P\in {\cal R}^{n\times n}$ such that}\\
					&\mbox{for all $U\in {\cal R}^{n\times n}$ with $U{\bf 1}^{\top}={\bf 1}^{\top}$ and $\beta U\geq {\bf 0}$, we have}\\
					&\sum_{i=1}^\ell\Big\langle B_{\lambda_i}B^\top_{\lambda_i}\overline{P}^\top(\lambda_i), \overline{U}^\top(\lambda_i)-\overline{P}^\top(\lambda_i)\Big\rangle\\
					&+\sum_{j=\ell+1}^k\Big\langle B_{(\mu_{j},\omega_j)}B^\top_{(\mu_{j},\omega_j)}\overline{P}^\top(\mu_{j},\omega_j),\overline{U}^\top(\mu_{j},\omega_j)-
					\overline{P}^\top(\mu_{j},\omega_j)\Big\rangle\geq 0,\\
					&\mbox{where $\overline{P}(\lambda_i)$, $\overline{U}(\lambda_i)$, $\overline{P}(\mu_{j},\omega_j)$ and $\overline{U}(\mu_{j},\omega_j)$ are given by the ${\cal J}$-blocked}\\
					&\mbox{ partitions of $P$ and $U$ as (\ref{pbar1})-(\ref{pbar2}), and $B_{\lambda_i}$ and $B_{(\mu_j\omega_j)}$ are given by}\\
					&\mbox{(\ref{ob_exp})-(\ref{B-defin}) with replacing $A$ by $A_l$.}
				\end{array}
				\right.
			\end{eqnarray*}
			Using Corollary 5.2 in Lee et al. \cite{lee2005quadratic}, the solution set of {\bf (AVI[$A_l$])} is the same as the KKT solution set of problem {\bf (OP[$A_l$])}, and then it is the same as the optimal solution set of problem {\bf (OP[$A_l$])} as it is convex. However, by Theorem 7.6 in Lee et al. \cite{lee2005quadratic},  we know the solution sets of {\bf (AVI[$A_l$])} for $l\geq 1$ are uniformly bounded.
			Therefore, the optimal solution sets for {\sf Sol}{\bf (OP[$A_l$])} for $l\geq 1$ are uniformly bounded.
			This implies that (\ref{p_inf}) is not true. Thus, we complete the proof of this proposition.	
		\end{proof}

		Now we can get the effectiveness of the {\bf AM} for the non-convex optimization problem {\bf (OP)} given by (\ref{july-15-2}). For simplicity, let ${\cal F} (A,P)=\Big\|PA-{\cal J}P \Big\|^2_F$.
		\begin{theorem}\label{KKT}
			The algorithm $($given by {\bf Step-a} and {\bf Step-b}$)$ can guarantee$:$
			\begin{itemize}
				\item[{\rm (i)}]  The sequence $\{{\cal F}(A_l,P_l):l\geq 0\}$  has a finite limit denoted by ${\cal F}_\infty;$
				\item[{\rm (ii)}] The sequence $\{(A_l, P_l): l\geq 0\}$ admits at least one limit point, and its any limit point, say $( {A}_\infty,{P}_\infty)$, is a critical point of the non-convex optimization problem $(\bf OP)$ given by {\rm (\ref{july-15-2})}. Further, ${\cal F}( {A}_\infty,{P}_\infty)= {\cal F}_\infty$.
			\end{itemize}
		\end{theorem}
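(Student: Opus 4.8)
The plan is to run the classical two–block alternating–minimization convergence argument: monotone decrease of the objective along the iterates, compactness to extract a limit point, and a limiting variational inequality to certify criticality. To fix notation I would set $\Omega_P=\{P\in{\cal R}^{n\times n}:P{\bf 1}^\top={\bf 1}^\top,\ \beta P\geq{\bf 0}\}$ and $\Omega_A=\{A\in{\cal R}^{n\times n}:A{\bf 1}^\top\leq{\bf 0},\ -\xi\leq a_{ii}\leq 0,\ 0\leq a_{ij}\leq\xi\ (i\neq j)\}$, so that the feasible region of {\bf (OP)} is the Cartesian product $\Omega_A\times\Omega_P$; both sets are closed and convex, and $\Omega_A$ is compact. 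The algorithm is run under the standing assumption $\Omega_P\neq\emptyset$ (otherwise, by the remark following Lemma~\ref{Alter-optimal-solution}, ${\cal L}(s)$ has no $n$-order PH representation and there is nothing to prove) and with the initial $A_0\in\Omega_A$; Lemma~\ref{Alter-optimal-solution} then guarantees the whole sequence $\{(A_l,P_l):l\geq 0\}$ is well defined, with $P_l\in\Omega_P$ for all $l$ and $A_l\in\Omega_A$ for all $l$ (for $l\geq 1$ because $A_l$ solves {\bf (OP[$P_{l-1}$])}, and for $l=0$ by choice). Recall also that $\bigtriangledown_P{\cal F}(A,P)$ and $\bigtriangledown_A{\cal F}(A,P)$ are polynomial, hence jointly continuous in $(A,P)$, and that on each block ${\cal F}$ is convex by Lemma~\ref{obj-convexity}, so for the convex subproblems "global minimizer" and "first-order variational inequality" are equivalent.

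For part (i) I would argue monotonicity. Since $A_{l+1}$ minimizes ${\cal F}(\cdot,P_l)$ over $\Omega_A$ and $A_l\in\Omega_A$, we get ${\cal F}(A_{l+1},P_l)\leq{\cal F}(A_l,P_l)$; since $P_{l+1}$ minimizes ${\cal F}(A_{l+1},\cdot)$ over $\Omega_P$ and $P_l\in\Omega_P$, we get ${\cal F}(A_{l+1},P_{l+1})\leq{\cal F}(A_{l+1},P_l)$. Chaining yields ${\cal F}(A_{l+1},P_{l+1})\leq{\cal F}(A_{l+1},P_l)\leq{\cal F}(A_l,P_l)$, so $\{{\cal F}(A_l,P_l):l\geq 0\}$ is non-increasing and bounded below by $0$, hence converges to a finite limit ${\cal F}_\infty$; and by the same two inequalities the interleaved values ${\cal F}(A_{l+1},P_l)$ are squeezed between consecutive terms, so they too converge to ${\cal F}_\infty$.

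For part (ii) I would first produce a limit point: $A_l\in\Omega_A$ (compact), and by Proposition~\ref{prop-ubound} applied with $A=A_l\in\Omega_A$ the $P_l$ are uniformly bounded, so $\{(A_l,P_l)\}$ is bounded and admits a subsequence $(A_{l_m},P_{l_m})\to(A_\infty,P_\infty)$ with $(A_\infty,P_\infty)\in\Omega_A\times\Omega_P$ by closedness, and ${\cal F}(A_\infty,P_\infty)=\lim_m{\cal F}(A_{l_m},P_{l_m})={\cal F}_\infty$ by continuity of ${\cal F}$ and part (i). Since the feasible set is a product, to show $(A_\infty,P_\infty)$ is a critical point of {\bf (OP)} it suffices to establish the two partial inequalities $\langle\bigtriangledown_P{\cal F}(A_\infty,P_\infty),P-P_\infty\rangle\geq 0$ on $\Omega_P$ and $\langle\bigtriangledown_A{\cal F}(A_\infty,P_\infty),A-A_\infty\rangle\geq 0$ on $\Omega_A$, and then add them. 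The $P$-part I get directly: $P_{l_m}$ solves the convex problem {\bf (OP[$A_{l_m}$])}, so $\langle\bigtriangledown_P{\cal F}(A_{l_m},P_{l_m}),P-P_{l_m}\rangle\geq 0$ for all $P\in\Omega_P$, and letting $m\to\infty$ (joint continuity of $\bigtriangledown_P{\cal F}$) gives the claim, equivalently $P_\infty\in\arg\min_{P\in\Omega_P}{\cal F}(A_\infty,P)$. The $A$-part is the delicate one, because the alternating scheme certifies optimality of $A_{l_m+1}$, not of $A_{l_m}$, for the subproblem driven by $P_{l_m}$. I would handle this by extracting a further subsequence with $A_{l_{m_k}+1}\to A'\in\Omega_A$ (possible since $\{A_{l_m+1}\}\subset\Omega_A$ is bounded), passing the variational inequality for $A_{l_{m_k}+1}$ as minimizer of {\bf (OP[$P_{l_{m_k}}$])} to the limit to conclude that $A'$ is a global minimizer of the convex function ${\cal F}(\cdot,P_\infty)$ over $\Omega_A$, and then observing that ${\cal F}(A',P_\infty)=\lim_k{\cal F}(A_{l_{m_k}+1},P_{l_{m_k}})={\cal F}_\infty={\cal F}(A_\infty,P_\infty)$; hence $A_\infty$ attains the same minimum value, i.e. $A_\infty\in\arg\min_{A\in\Omega_A}{\cal F}(A,P_\infty)$, and convexity then delivers $\langle\bigtriangledown_A{\cal F}(A_\infty,P_\infty),A-A_\infty\rangle\geq 0$ for all $A\in\Omega_A$. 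Adding the two partial inequalities shows $(A_\infty,P_\infty)$ is a critical point of {\bf (OP)}, and ${\cal F}(A_\infty,P_\infty)={\cal F}_\infty$ has already been noted.

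The hard part is exactly this last step — the $A$-partial criticality at $(A_\infty,P_\infty)$. Monotonicity in (i) and the boundedness half of (ii) are routine once Proposition~\ref{prop-ubound} is in hand, and the $P$-partial inequality comes for free from the limit passage. The obstacle is the one-step offset between the update that produces $P_{l_m}$ and the update $A_{l_m+1}$ that consumes it: one cannot push the $A$-variational inequality straight through the chosen subsequence. The remedy is the auxiliary limit $A'$ together with the objective-value squeeze from (i), which lets the convexity of ${\cal F}(\cdot,P_\infty)$ transfer the first-order optimality condition from $A'$ back to $A_\infty$.
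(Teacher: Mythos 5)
Your proposal is correct, and parts of it coincide with the paper: the monotonicity chain ${\cal F}(A_l,P_l)\geq{\cal F}(A_{l+1},P_l)\geq{\cal F}(A_{l+1},P_{l+1})$ for (i), and the existence of a limit point via compactness of the $A$-constraint set together with Proposition \ref{prop-ubound}, are exactly what the paper does. Where you genuinely diverge is the criticality step in (ii): the paper disposes of it in one line by invoking Corollary 2 of Grippo and Sciandrone \cite{grippo2000convergence} on two-block nonlinear Gauss--Seidel under convex constraints, whereas you re-derive that two-block result from scratch --- passing the variational inequality for the $P$-subproblem directly to the limit, and handling the one-step offset in the $A$-updates by extracting an auxiliary limit $A'$ of $\{A_{l_m+1}\}$, using the squeeze of the interleaved values ${\cal F}(A_{l+1},P_l)$ to get ${\cal F}(A',P_\infty)={\cal F}_\infty={\cal F}(A_\infty,P_\infty)$, and then transferring minimality of ${\cal F}(\cdot,P_\infty)$ over $\Omega_A$ from $A'$ to $A_\infty$ before adding the two partial inequalities over the product feasible set. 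The paper's route is shorter and its citation is legitimate because Lemmas \ref{obj-convexity}--\ref{Alter-optimal-solution} and Proposition \ref{prop-ubound} supply precisely the hypotheses of the cited corollary (block-convex smooth objective, closed convex blocks, well-posed subproblems, bounded iterates); your route buys a self-contained argument that makes explicit where block convexity, the Cartesian-product structure of the constraints, and the uniform boundedness enter, and it is sound --- indeed the final transfer from $A'$ to $A_\infty$ needs only that a minimizer satisfies the first-order inequality, so even less than the convexity you invoke.
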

		
		\begin{proof}
			We first prove that $\{{\cal F}(A_l, P_l): l\geq 0\}$ is monotonically decreasing.
			Recalling the procedure to generate $\{(A_l, P_l): l\geq 0\}$ by {\bf Step-a} and {\bf Step-b}, we first arbitrarily
			pick up $A_0$ which satisfies the constraints in \eqref{july-15-6}, and
			\begin{align*}
				\left\{
				\begin{array}{llll}
					&P_l \ \mbox{is a solution for the convex optimization problem (\bf OP[$A$])}\\
					& \ \mbox{given by (\ref{july-15-4}) with $A=A_{l}$ for } l\geq 0;\\
					& A_l \ \mbox{is a solution for the convex optimization problem (\bf{OP[$P$]})}\\
					& \ \mbox{given by (\ref{july-15-6}) with $P=P_{l-1}$ for }   l\geq 1.
				\end{array}
				\right.
			\end{align*}	
			Hence we have that for $l\geq 0$,
			\begin{align*}
				{\cal F}(A_l, P_l)\geq {\cal F}(A_{l+1}, P_l) \geq {\cal F}(A_{l+1}, P_{l+1}).
			\end{align*}	
			Since ${\cal F}(A_l, P_l) \geq 0$ for each $l$, there exists a nonnegative real number $ {\cal F}_\infty$ such that
			\begin{eqnarray*}
				\lim_{l \rightarrow \infty} {\cal F}(A_l, P_l) ={\cal F}_\infty.
			\end{eqnarray*}
			This gives the first part of the theorem.

			Now prove the second part of the theorem. The existence of the limit point admitted by the sequence $\{(A_l, P_l): l\geq 0\}$ directly follows from
			Proposition \ref{prop-ubound}.
			Further, by the first part of the theorem, we know that the objective value given by any limit point of
			$\{(A_l, P_l): l\geq 0\}$ must be ${\cal F}_\infty$. So the remainder of the proof of the second part is to show that
			any limit point of  $\{(A_l, P_l): l\geq 0\}$  must be a critical point of the non-convex optimization problem $(\bf OP)$ given by {\rm (\ref{july-15-2})}. From
			Proposition \ref{prop-ubound}, this follows from Corollary 2 in \cite{grippo2000convergence}. Then we complete this theorem.
		\end{proof}
		
		Telek and Horv\'ath \cite{telek2007} develop a moments matching algorithm to determine an $n$-order PH representation of the rational function ${\cal L}(s)$ given by \eqref{prob1}. To make their algorithm possibly converge to a PH distribution,
		continuously differentiable functions used to penalize the negative off-diagonal terms must be properly constructed and implemented in each step of the algorithm.  When their algorithm gets convergence by keeping properly tuning the penalization functions, if it terminates with a PH representation, then it successfully finds an $n$-order PH representation of ${\cal L}(s)$; Otherwise, if it terminates without a PH representation, it cannot tell us if ${\cal L}(s)$ has an $n$-order PH representation. Here, Theorem \ref{KKT}, however, shows that the {\bf AM} algorithm converges, and the convergence limit point is a critical point without keeping properly constructing the penalization functions. Similar to the moments matching algorithm of Telek and Horv\'ath \cite{telek2007}, on the other hand,  as the outcome objective value obtained from the {\bf AM} algorithm may not be the global minimum, when the outcome objective value computed by the {\bf AM} algorithm turns out to be non-zero, it does not imply that ${\cal L}(s)$ has no $n$-order PH representation, see Example \ref{examp:dis} in Section \ref{sec:numerical}.
		
		Next we show that the above analysis and results for the continuous-time minimal PH representation
		problem can be used to analyze the discrete-time case.
		
		\section{ Minimal Discrete-time PH Representation}\label{dct_case}
		Parallel to the continuous-time case, consider a discrete nonnegtaive-integer-value random variable with the generating function
		\begin{align}\label{prob2}
			{\cal G}(z)=\frac{\widetilde{p}(z)}{\widetilde{q}(z)}=\frac{\widetilde{p}_nz^n+\widetilde{p}_{n-1}z^{n-1}+\cdots +\widetilde{p}_1z}{\widetilde{q}_nz^n+\widetilde{q}_{n-1}z^{n-1}+\cdots+1} \ \ \mbox{for  $|z|\leq 1$}.
		\end{align}
		If ${\cal G}(z)$ satisfies that
		\begin{itemize}
			\item [$(\widetilde{\bf A1})$] its coefficients are real numbers;
			\item [$(\widetilde{\bf A2})$] $\widetilde{p}_n^2+\widetilde{q}_n^2\neq 0$, $\widetilde{p}(s)$ and $\widetilde{q}(s)$ are coprime;
			\item [$(\widetilde{\bf A3})$] ${\cal G}(1)=1$ and the root with minimal modulus of $\widetilde{q}(z)$ is real and
			strictly greater than $1$,
		\end{itemize}
		then, from Theorem 1.2 in O'Cinneide \cite{o1990characterization}, we know that this discrete nonnegtaive-integer-value random variable
		is given by an absorbing time of a discrete-time Markov chain. Similar to the continuous-time case, starting with the moment generating function ${\cal G}(z)$ given by (\ref{prob2}) with $(\widetilde{\bf A1})$-$(\widetilde{\bf A3})$ holding,
		our problem can be phrased as:
		\begin{itemize}
			\item[] {\it Finding  $\widetilde{\alpha} \in {\cal R}_+^n$ with $\widetilde\alpha{\bf 1}^\top=1$, and
				a substochastic matrix $\widetilde{A}=(\widetilde{a}_{ij})_{n\times n}\in {\cal R}_+^{n\times n}$
				with $(I-\widetilde{A})$ being nonsingular
				such that
				\begin{align}
					{\cal G}(z)&=\sum_{l=1}^{\infty}\widetilde\alpha \widetilde{A}^{l-1}(I-\widetilde{A}){\bf 1}^\top z^l\nonumber\\
					&=z\widetilde\alpha (I-\widetilde{A})(I-z\widetilde{A})^{-1}{\bf 1}^\top\nonumber\\
					&=\frac{z\widetilde\alpha (I-\widetilde{A})\cdot {\sf adj}(I-z\widetilde{A}){\bf 1}^\top}{{\sf det}(I-z\widetilde{A})} \ \ \mbox{for $|z|\leq 1$}.	\label{equ:dis-ge}		
			\end{align}}
		\end{itemize}

		Note that the form of \eqref{prob2} with $(\widetilde{\bf A1})$-$(\widetilde{\bf A3})$ holding actually includes the generating functions of all discrete-time PH distributions which assign no mass at zero and have no periodicity. Here a discrete-time PH distribution has no periodicity means that it has a representation whose corresponding Markov chain has no periodic states. To see this, suppose that $(\widetilde \alpha,\widetilde{A})$ is such a representation from an aperiodic, and $(m+1)$-state Markov chain in which (i) all the states are transient except one state is absorbing; (ii) the initial state distribution given by $\widetilde\alpha$ has zero mass at the absorbing state; and (iii) the transition probabilities among the transient states is $\widetilde{A}$. We claim that $z\widetilde\alpha (I-\widetilde{A})(I-z\widetilde{A})^{-1}{\bf 1}^\top$ can be written as the form of \eqref{prob2} and satisfies $(\widetilde{\bf A1})$-$(\widetilde{\bf A3})$. Notice first that the numerator polynomial in $z\widetilde\alpha (I-\widetilde{A})(I-z\widetilde{A})^{-1}{\bf 1}^\top$  has a factor $z$ but  the denominator polynomial does not have it. Therefore, after some simplification, when the numerator polynomial and denominator polynomial become coprime, the numerator polynomial has no constant term and then
		$z\widetilde\alpha (I-\widetilde{A})(I-z\widetilde{A})^{-1}{\bf 1}^\top$ is the form of \eqref{prob2}.
		Further note that
		\[
		\Big(z\widetilde\alpha (I-\widetilde{A})(I-z\widetilde{A})^{-1}{\bf 1}^\top\Big)_{z=1}=1.
		\]
		Hence, if $z\widetilde\alpha (I-\widetilde{A})(I-z\widetilde{A})^{-1}{\bf 1}^\top$ is a polynomial, then it satisfies $(\widetilde{\bf A1})$-$(\widetilde{\bf A3})$.
		Otherwise, it is straightforward to see that $z\widetilde\alpha (I-\widetilde{A})(I-z\widetilde{A})^{-1}{\bf 1}^\top$
		satisfies $(\widetilde{\bf A1})$-$(\widetilde{\bf A2})$. To see $(\widetilde{\bf A3})$, it suffices to show that its pole with minimal modulus is
		real and strictly larger than 1. From the proof of the necessity of Theorem 1.2 on p.50 in O'Cinneide \cite{o1990characterization}, its pole with minimal modulus is real and positive. To see it is strictly larger than 1, note that its poles belong to the set of reciprocals of $\widetilde{A}$'s nonzero eigenvalues. As $\widetilde{A}$ is substochastic and $(I-\widetilde{A})$ is invertible, the spectral radius of $\widetilde{A}$ is strictly less than 1. Hence, the pole of $z\widetilde\alpha (I-\widetilde{A})(I-z\widetilde{A})^{-1}{\bf 1}^\top$ with minimal modulus is strictly larger than 1, and $(\widetilde{\bf A3})$ is satisfied.
		
		Although ${\cal G}(z)$ in (\ref{prob2}) assumes that there is no probability mass at zero, similar to  the continuous-time case ${\cal L}(s)$ given by (\ref{prob1}),  our
		following analysis and results on ${\cal G}(z)$ in (\ref{prob2})  can be directly adopted to the case
		with positive probability mass at zero by just setting the initial probability at the
		absorbing state to be this positive probability mass at zero for the Markov chain we
		have identified. Without losing generality, here we only focus on ${\cal G}(z)$ in (\ref{prob2}).
		
		First we establish the following theorem to transform this discrete-time PH representation problem into a continuous-time PH representation problem.
		
		\begin{theorem}\label{thm:d-c}
			${\cal G}(z)$ given by \eqref{prob2} has an $n$-order discrete-time PH representation $(\widetilde\alpha,\widetilde{A})$ if and only if ${\cal G}(1/(s+1))$ has an $n$-order continuous-time PH representation $(\widetilde \alpha, A)$ with the diagonal entries of $A$ larger than or equal to $-1$.
		\end{theorem}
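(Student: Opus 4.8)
The plan is to read the statement as a change of variables driven by the Möbius map $z=1/(s+1)$, whose inverse is $s+1=1/z$, and to set up the explicit correspondence $A:=\widetilde A-I$ (equivalently $\widetilde A:=A+I$) between the two candidate $n\times n$ representation matrices, keeping the same initial vector $\widetilde\alpha$ on both sides. The first step is to check that this affine map carries the admissible set on one side bijectively onto the admissible set on the other. For $i\neq j$ nothing changes: $a_{ij}=\widetilde a_{ij}\geq 0$. On the diagonal, substochasticity of $\widetilde A$ forces $\widetilde a_{ii}\leq 1$, and $\widetilde a_{ii}=1$ is impossible, since together with nonnegativity and row sums at most $1$ it would make every off-diagonal entry in row $i$ vanish, hence row $i$ of $I-\widetilde A$ zero, contradicting invertibility of $I-\widetilde A$; thus $0\leq\widetilde a_{ii}<1$, i.e. $-1\leq a_{ii}<0$, and conversely $a_{ii}\geq -1$ together with $a_{ii}<0$ gives exactly $0\leq\widetilde a_{ii}<1$. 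Likewise $A{\bf 1}^\top\leq{\bf 0}$ is equivalent to $\widetilde A{\bf 1}^\top\leq{\bf 1}^\top$, and $A=-(I-\widetilde A)$ is nonsingular iff $I-\widetilde A$ is, while the normalization $\widetilde\alpha\in{\cal R}_+^n$, $\widetilde\alpha{\bf 1}^\top=1$ is common to both notions. (One should also recall, as in the discussion preceding the theorem, that a nonnegative $\widetilde A$ with $I-\widetilde A$ invertible has spectral radius strictly below $1$ by Perron--Frobenius, so $(I-z\widetilde A)$ is invertible for $|z|\leq 1$ and the geometric-series form of ${\cal G}$ is legitimate.)

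The second step is the transfer-function identity, which makes this correspondence match the two transform formulas. Starting from \eqref{equ:dis-ge} and substituting $z=1/(s+1)$, the leading factor cancels once one writes $\bigl(I-\tfrac{1}{s+1}\widetilde A\bigr)^{-1}=(s+1)\bigl((s+1)I-\widetilde A\bigr)^{-1}$, so that ${\cal G}(1/(s+1))=\widetilde\alpha(I-\widetilde A)\bigl((s+1)I-\widetilde A\bigr)^{-1}{\bf 1}^\top$; then $I-\widetilde A=-A$ and $(s+1)I-\widetilde A=sI-A$ turn this into ${\cal G}(1/(s+1))=-\widetilde\alpha A(sI-A)^{-1}{\bf 1}^\top$, which is exactly \eqref{equ:lapla} for the pair $(\widetilde\alpha,A)$. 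Since $(\widetilde\alpha,A)$ is then a genuine initial-distribution/subgenerator pair, the right-hand side really is the LST of the PH distribution it defines, so ${\cal G}(1/(s+1))$ indeed has the claimed $n$-order continuous-time representation; the domain is consistent because $s\mapsto 1/(s+1)$ sends $\{{\sf Re}(s)\geq 0\}$ into the disk $\{|z-\tfrac12|\leq\tfrac12\}\subset\{|z|\leq 1\}$ on which ${\cal G}$ is analytic, and $s=0$ maps to $z=1$ where ${\cal G}(1)=1$ by $(\widetilde{\bf A3})$. For the converse I would run the same computation backwards: from ${\cal G}(1/(s+1))=-\widetilde\alpha A(sI-A)^{-1}{\bf 1}^\top$ with $\widetilde A=A+I$, rewrite the right-hand side as $\widetilde\alpha(I-\widetilde A)\bigl((s+1)I-\widetilde A\bigr)^{-1}{\bf 1}^\top$, substitute $s+1=1/z$, and use $\bigl((1/z)I-\widetilde A\bigr)^{-1}=z(I-z\widetilde A)^{-1}$ to recover $z\widetilde\alpha(I-\widetilde A)(I-z\widetilde A)^{-1}{\bf 1}^\top={\cal G}(z)$, i.e. \eqref{equ:dis-ge} for $(\widetilde\alpha,\widetilde A)$.

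I do not expect a genuine obstacle: the content is a resolvent-identity change of variables. The only place that needs real care is the diagonal bookkeeping in the first step --- excluding $\widetilde a_{ii}=1$ so that $A$ has strictly negative diagonal, and noting that the extra hypothesis ``$a_{ii}\geq -1$'' in the continuous-time representation is precisely what is needed (and no more) to keep $\widetilde a_{ii}=a_{ii}+1$ nonnegative, so that $\widetilde A=A+I$ is a legitimate substochastic matrix. Everything else is the algebra displayed above together with the elementary domain and spectral-radius remarks.
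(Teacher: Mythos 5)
Your proposal is correct and follows essentially the same route as the paper's proof: the affine correspondence $\widetilde A=A+I$ with the same initial vector, the resolvent substitution $z=1/(s+1)$ turning $z\widetilde\alpha(I-\widetilde A)(I-z\widetilde A)^{-1}{\bf 1}^\top$ into $-\widetilde\alpha A(sI-A)^{-1}{\bf 1}^\top$, and the sign/row-sum bookkeeping in both directions (your explicit exclusion of $\widetilde a_{ii}=1$ is a detail the paper glosses over). The only step the paper makes explicit that you elide is in the converse: the backward substitution gives \eqref{equ:dis-ge} only for $z$ in the image of $\{{\sf Re}(s)\geq 0\}$, namely the disk $\{|z-\tfrac12|\leq\tfrac12\}$, and one should add that both sides are analytic (rational) on $\{|z|\leq 1\}$, so the identity extends to the whole unit disk by uniqueness of analytic continuation.
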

		\begin{proof}
			Suppose that ${\cal G}(z)$ has an $n$-order discrete-time PH representation $(\widetilde\alpha,\widetilde{A})$, by \eqref{equ:dis-ge},
			\begin{align*}
				{\cal G}(z)=z\widetilde\alpha(I-\widetilde{A}) (I-z\widetilde{A})^{-1}{\bf 1}^\top=\widetilde\alpha(I-\widetilde{A}) (z^{-1}I-\widetilde{A})^{-1}{\bf 1}^\top.
			\end{align*}
			Note that both sides of this equation are analytical in the region $\{z\in {\cal C}:|z|\leq 1\}$ and when $s\in {\cal C}$ and ${\sf Re}(s)\geq 0$, $|1/(s+1)|\leq 1$. Then for $s\in {\cal C}$ and ${\sf Re}(s)\geq 0$,
			\begin{align*}
				{\cal G}(1/(s+1))=\widetilde\alpha(I-\widetilde{A}) ((s+1)I-\widetilde{A})^{-1}{\bf 1}^\top=-\widetilde \alpha(\widetilde{A}-I) (s-(\widetilde{A}-I))^{-1}{\bf 1}^\top.
			\end{align*}
			Let $A=\widetilde{A}-I$ and write $A=(a_{ij})_{n\times n}$. In view of the property of $\widetilde{A}$, we know that $A$ is nonsingular, $A{\bf 1}^\top\leq {\bf 0}$, $a_{ii}\geq -1$ and $a_{ij}\geq 0$ for $i\neq j, 1\leq i,j\leq n$. Therefore, $(\widetilde \alpha, A)$ is an $n$-order continuous-time PH representation.
			
			Conversely, suppose that ${\cal G}(1/(s+1))$ has an $n$-order continuous-time representation $(\alpha,A)$ with the diagonal entries of $A$ larger than or equal to $-1$. That is,
			\begin{align*}
				{\cal G}(1/(s+1))=-\alpha A (sI-A)^{-1}{\bf 1}^\top=-\alpha A ((s+1)I-(A+I))^{-1}{\bf 1}^\top
			\end{align*}
			for $s\in {\cal C}$ and ${\sf Re}(s)\geq 0$.
			Then for $z\in \{z\in {\cal C}:{\sf Re}(z)>0,|z|\leq 1\}$,
			\begin{align*}
				{\cal G} (z)=-\alpha A (z^{-1}I-(A+I))^{-1}{\bf 1}^\top.
			\end{align*}
			Observe that both sides of the above equation are analytical in the region $\{z\in {\cal C}:|z|\leq 1\}$. Using the uniqueness of analytical function, the above equation also holds in the region $\{z\in {\cal C}:|z|\leq 1\}$.
			Let $\widetilde{A}=A+I$. We have
			\begin{align*}
				{\cal G}(z)=\alpha(I-\widetilde{A}) (z^{-1}I-\widetilde{A})^{-1}{\bf 1}^\top=z\alpha(I-\widetilde{A}) (I-z\widetilde{A})^{-1}{\bf 1}^\top.
			\end{align*}
			In view of the property of $A$ and $a_{ii}\geq -1$, we know $(\alpha, \widetilde{A})$ is an $n$-order discrete-time PH representation.
		\end{proof}
		
		Maier \cite{maier1991} shows that ${\cal G}(c/(s+c))$ with any $c>0$ is the LST of a continuous-time PH distribution if ${\cal G}(z)$ is the generating function of a discrete-time PH distribution. This result does not show (i) the minimal representation order relation between these two continuous-time and discrete-time PH distributions; and (ii)  whether ${\cal G}(z)$ is the generating function of a discrete-time PH distribution  given that ${\cal G}(c/(s+c))$ with any $c>0$ is the LST of a continuous-time PH distribution. To completely adopt the results developed in Section \ref{cts_case}, we need the equivalent relation given by Theorem \ref{thm:d-c} between the minimal representations for the continuous-time and discrete-time PH distributions. To answer the minimal representation problem
		for the discrete-time PH distribution specified by (\ref{equ:dis-ge}), the equivalent relation obtained by Theorem \ref{thm:d-c} implies that it is sufficient to consider the continuous-time case with LST given by  ${\cal G}(1/(s+1))$. To apply what we have developed for the continuous-time case in Section  \ref{cts_case}, we have to show ${\cal G}(1/(s+1))$ has the canonical form  of (\ref{prob1}).

		\begin{theorem}\label{prop:d-c}
			For ${\cal G}(z)$ given by \eqref{prob2},	let ${\cal L}_0(s)={\cal G}(1/(s+1))$, $p_0(s)=(s+1)^n\widetilde{p}(1/(s+1))$ and $q_0(s)=(s+1)^n\widetilde{q}(1/(s+1))$. Then
			\begin{itemize}
				\item[{\rm (i)}]
				\begin{align}\label{equ:h}
					{\cal L}_0(s)=\frac{p_0(s)}{q_0(s)}=\frac{\widetilde{p}_1(s+1)^{n-1}+\widetilde{p}_{2}(s+1)^{n-2}+\cdots +\widetilde{p}_n}{(s+1)^n+\widetilde{q}_{2}(s+1)^{n-1}+\cdots+\widetilde{q}_{n}};
				\end{align}	
				\item[{\rm(ii)}] The coefficients of $p_0(s)$ and $q_0(s)$ are real$;$
				\item [{\rm(iii)}] $p_0(s)$ and $q_0(s)$ are coprime$;$
				\item[{\rm(iv)}] ${\cal L}_0(0)=1$ and the root with maximal real part of $q_0(s)$  is real and negative.
			\end{itemize}
		\end{theorem}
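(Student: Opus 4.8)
The plan is to verify the four items in turn, treating (i)--(iii) as formal consequences of the substitution $z\mapsto 1/(s+1)$ and reserving the real work for the pole analysis in (iv). For (i)--(ii): substituting $z=1/(s+1)$ and using that in \eqref{prob2} the numerator $\widetilde{p}$ has no constant term while the constant term of $\widetilde{q}$ equals $1$ yields $p_0(s)=(s+1)^n\sum_{i=1}^n\widetilde{p}_i(s+1)^{-i}=\sum_{i=1}^n\widetilde{p}_i(s+1)^{n-i}$ and $q_0(s)=(s+1)^n+\sum_{i=1}^n\widetilde{q}_i(s+1)^{n-i}$, which is \eqref{equ:h}, together with ${\cal L}_0(s)={\cal G}(1/(s+1))=p_0(s)/q_0(s)$; expanding each $(s+1)^k$ by the binomial theorem and invoking $(\widetilde{\bf A1})$ then gives the reality of all coefficients, which is (ii). I would also record for later use that $q_0$ is monic of degree exactly $n$ (its unique degree-$n$ term is $(s+1)^n$) while $\deg p_0\le n-1$, so ${\cal L}_0$ is strictly proper of the same shape as \eqref{prob1}.

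For (iii): using $p_0(s)=(s+1)^n\widetilde{p}(1/(s+1))$ and $q_0(s)=(s+1)^n\widetilde{q}(1/(s+1))$, I would rule out a common root $s_0$ of $p_0$ and $q_0$ in two cases. If $s_0\ne -1$, then $z_0:=1/(s_0+1)\in{\cal C}\setminus\{0\}$ satisfies $\widetilde{p}(z_0)=\widetilde{q}(z_0)=0$, contradicting the coprimality of $\widetilde{p},\widetilde{q}$ in $(\widetilde{\bf A2})$. If $s_0=-1$, then \eqref{equ:h} gives $\widetilde{p}_n=p_0(-1)=0$ and $\widetilde{q}_n=q_0(-1)=0$, contradicting $\widetilde{p}_n^2+\widetilde{q}_n^2\ne 0$ in $(\widetilde{\bf A2})$. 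Hence $p_0$ and $q_0$ are coprime.

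For (iv): first ${\cal L}_0(0)={\cal G}(1/(0+1))={\cal G}(1)=1$ by $(\widetilde{\bf A3})$. For the poles I would factor $\widetilde{q}(z)=\widetilde{q}_m\prod_{i=1}^m(z-w_i)$ with $m=\deg\widetilde{q}\le n$ and all $w_i\ne 0$ (since $\widetilde{q}(0)=1$), substitute, and simplify using $\widetilde{q}_m\prod_i(-w_i)=\widetilde{q}(0)=1$ to obtain
\[
q_0(s)=(s+1)^{n-m}\prod_{i=1}^m\bigl(s-(1/w_i-1)\bigr),
\]
so that the roots of $q_0$ are $-1$ (with multiplicity $n-m\ge 0$) together with the numbers $\rho_i:=1/w_i-1$. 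Letting $w_1$ be the unique minimal-modulus root of $\widetilde{q}$, which by $(\widetilde{\bf A3})$ is real with $w_1>1$, we get $\rho_1=1/w_1-1\in(-1,0)$, which is real and negative; and for every other root $w$ of $\widetilde{q}$ we have $|w|>w_1$, hence ${\sf Re}(1/w-1)=\frac{{\sf Re}(w)}{|w|^2}-1\le\frac{1}{|w|}-1<\frac{1}{w_1}-1=\rho_1$, while the root $-1$ satisfies $-1<\rho_1$. Thus $\rho_1$ is the unique root of $q_0$ of maximal real part and it is real and negative, giving (iv). The routine content is (i)--(iii); the one subtlety there is that the point $s=-1$ — the image of $z=0$, and a root of $q_0$ of multiplicity $n-\deg\widetilde{q}$ — must be excluded separately in (iii), which is exactly what the hypothesis $\widetilde{p}_n^2+\widetilde{q}_n^2\ne 0$ is for. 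The main obstacle is the pole bookkeeping in (iv): getting the factored form of $q_0$ right (including the $(s+1)^{n-\deg\widetilde{q}}$ factor at $s=-1$) and establishing the strict inequality ${\sf Re}(1/w)<1/w_1$ for every non-minimal-modulus root $w$, since this is what simultaneously yields that the dominant pole of ${\cal L}_0$ is unique, real, and negative.
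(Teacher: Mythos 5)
Your proof is correct and follows essentially the same route as the paper: direct substitution for (i)--(ii), mapping a hypothetical common root back through $z=1/(s+1)$ (with the point $s=-1$ handled via $\widetilde p_n^2+\widetilde q_n^2\neq 0$) for (iii), and factoring $\widetilde q$ so that the roots of $q_0$ are $-1$ (with multiplicity $n-\deg\widetilde q$) together with $1/w_i-1$, then using the unique minimal-modulus real root $w_1>1$ and ${\sf Re}(1/w)\le 1/|w|<1/w_1$ for (iv). The paper phrases the factorization via $\widetilde q(z)=\prod(1-\gamma_i z)^{n_i}$ with $\gamma_i=1/w_i$, but the argument is the same.
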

		
		\begin{proof}
			(i) and (ii) are straightforward by replacing $z$ with $1/(s+1)$ in \eqref{prob2}. Note that ${\cal L}_0(0)={\cal G}(1)=1$. Further, we can factor $\widetilde{q}(z)$ as the following form:
			\begin{align*}
				\widetilde{q}(z)=(1-\gamma_1z)^{n_1}(1-\gamma_2z)^{n_2}\cdots(1-\gamma_bz)^{n_b}.
			\end{align*}
			Here $\gamma_i\neq 0$. Let $n_{[1,b]}=\sum_{l=1}^bn_l$. If $\widetilde{q}_n\neq 0$, then $n_{[1,b]}=n$; otherwise, $n_{[1,b]}<n$. That is, the set of all distinct roots of $\widetilde{q}(z)$ is $\{1/\gamma_1,\ldots,1/\gamma_b\}$ with $1>\gamma_1>|\gamma_i|$ for $2\leq i\leq b$ from ($\widetilde{\bf A3}$).
			Then
			\begin{align*}
				q_0(s)=\big(s+1\big)^{n- n_{[1,b]}}\big(s-(\gamma_1-1)\big)^{n_1}\big(s-(\gamma_2-1)\big)^{n_2}\cdots\big(s-(\gamma_b-1)\big)^{n_b}.
			\end{align*}
			It follows that $\{\gamma_1-1,\gamma_2-1,\ldots,\gamma_b-1\}$  are all distinct roots of $q(s)$, and $-1$ is also a root when $n_{[1,b]}<n$.   Note that
			${\sf Re}(\gamma_i-1)<\gamma_1-1<0$ for $2\leq i\leq b$ and $\gamma_1-1>-1$. We obtain that the root with maximal real part of $q_0(s)$  is real and negative. Hence, we have (iv).

			It remains to show (iii). If it is not true, suppose that $\gamma_i-1$ is also a root of $p_0(s)$. That is,
			\[
			0=p_0(\gamma_i-1)=\gamma_i^n\widetilde{p}\big(\frac{1}{\gamma_i}\big).
			\]
			$\gamma_i\neq 0$ implies that $\widetilde{p}(1/\gamma_i)=0$. This contradicts that $\widetilde{p}(z)$ and $\widetilde{q}(z)$ are coprime from ($\widetilde{\bf A2}$).
			Then $\gamma_i-1$ cannot be the root of $p_0(s)$. Thus, if $n_{[1,b]}=n$, we complete the roof of (iii).
			
			If $n_{[1,b]}<n$, we need to show that $-1$ is not the root of $p_0(s)$. Note that $n_{[1,b]}<n$ means $\widetilde{q}_n= 0$. This tells us $\widetilde{p}_n\neq 0$. Then $-1$ is not the root of $p_0(s)$ and (iii) still holds.
			Hence, (iii) is proved.
		\end{proof}
		
		With the help of Theorems \ref{thm:d-c} and \ref{prop:d-c}, the discrete-time case with ${\cal G}(z)$ given by
		(\ref{prob2}) is equivalently transformed into the continuous-time case with ${\cal L}_0(s)$ which has the standard form defined by (\ref{prob1}) and satisfies
		({\bf A1})-({\bf A3}). So the discrete-case can be solved by formulating the optimization problem ({\bf OP}) given by (\ref{july-15-2}) with modifying $\xi$ given by \eqref{equ:xi} to be $1$.
		
		\section{Numerical Experiments}
		\label{sec:numerical}
		
		 In this section, we use the {\bf AM} algorithm to conduct the numerical experiments and report the corresponding numerical performance on the minimal representations for both continuous-time and discrete-time cases.
			The numerical experiments lead to the following conclusions: (i) the {\bf AM} algorithm produces  $A$ and $P$ such that $ {\cal F} (A,P)=0$ and they satisfy the constraints in the  non-convex optimization problem {\bf (OP)} given by (\ref{july-15-2}), then we find a minimal PH representation; (ii) when the {\bf AM} algorithm produces a nonzero value, it often does not have a minimal PH representation with the same order as its algebraic degree. Thus, the {\bf AM} algorithm may provide an efficient approach to simultaneously solve the problem whether there exists an algebraic degree minimal PH representation, and the problem how to find the corresponding representation for both continuous-time and discrete-time cases; (iii) choosing different initial $A_0$, we may obtain different minimal PH representations; and (iv) the {\bf AM} algorithm becomes more efficient and effective when ${\cal L}(s)$ only has real poles in terms of the computation time and whether a minimal PH representation can be successfully found.  
			At the same time, we make a comparison between the {\bf AM} algorithm and the {\bf MM} method in Mészáros et al. \cite{telek2013}, and Telek and Horváth \cite{telek2007} by extensive numerical experiments.
		
		First we provide an example to show how to adjust the initial $A_0$ to generate different minimal PH representations.
		\begin{example}
			{\rm	Consider	
				\begin{align*}
					{\cal L}(s)=\frac{1.161}{s+1}-\frac{0.23}{s+2.8+0.4\imath}-\frac{0.23}{s+2.8-0.4\imath}.
				\end{align*}
				If it has a $3$-order PH representation $(\alpha, A)$, the spectrum of $A$ must be $\{-1,-2.8\pm 0.4\imath\}$. Then by Proposition {\rm\ref{prop_Jordan}}, the Jordan form ${\cal J}$ of $A$ is
				\begin{align*}
					\begin{bmatrix}
						-1&0 & 0\\
						0&-2.8&0.4\\
						0&-0.4&-2.8\\
					\end{bmatrix}.
				\end{align*}
				From \eqref{equ_beta} and \eqref{equ:xi} and Theorem {\rm\ref{iff}}, we can calculate
				\begin{align*}
					\beta=(1.161,-0.092,-0.069),
				\end{align*}	
				and $\xi=6.6$. Now we use the {\bf AM} algorithm to solve the optimization problem {\bf (OP)} given by \eqref{july-15-2}.	Let $A_0={\cal J}$. The outcome of the {\bf AM} algorithm is
				\begin{align*}
					P_{\infty} \approx \begin{bmatrix}
						0.5347  &  0.1988  &  0.2665\\
						-0.1520 &   0.2325 &  0.9195\\
						-0.1251 &   -0.0858 &   1.2109
					\end{bmatrix}, \ A_{\infty} \approx \begin{bmatrix}
						-1.1120 &   0.5827 &   0.0033\\
						0.0000 & -2.5666  &  2.5666\\
						0.2246  &  0.0000  & -2.9216
					\end{bmatrix}
				\end{align*}
				and ${\cal F}(A_{\infty},P_{\infty}) \approx 3.6262\times 10^{-9}$. By Theorem {\rm\ref{iff}}, let $\alpha=\beta P_{\infty} \approx (0.6434,0.2154,0.1412)$.	
				We obtain a minimal PH representation $(\alpha,A_{\infty})$ of ${\cal L}(s)$.
				
				If we choose
				\begin{align*}
					\widehat{A}_0=\begin{bmatrix}
						-1&1&0\\
						0&0&0\\
						0&0&0
					\end{bmatrix}.
				\end{align*}
				The outcome of the {\bf AM} algorithm is
				\begin{align*}
					\widehat{P}_{\infty} \approx \begin{bmatrix}
						0.3438  &  0.3351  &  0.3211\\
						0.3568   & 0.8141   &-0.1709\\
						0.0427   & 1.0944   &-0.1371
					\end{bmatrix}, \
					\widehat{A}_{\infty} \approx \begin{bmatrix}
						-2.1582  &  2.0974  &  0.0000\\
						0.0000   &-3.1762   & 0.2547\\
						1.2403   & 0.0253   &-1.2658
					\end{bmatrix}
				\end{align*}
				and ${\cal F}(\widehat{A}_{\infty},\widehat{P}_{\infty}) \approx 4.0136 \times 10^{-9}$. Let $\widehat{\alpha}=\beta \widehat{P}_{\infty} \approx (0.3634,0.2386,0.3980)$.	
				We obtain another minimal PH representation $(\widehat{\alpha},\widehat{A}_{\infty})$ of ${\cal L}(s)$. \hfill$\Box$
			}
		\end{example}
		
		The following example shows that when the  {\bf AM} algorithm
		produces a nonzero value, quite often the minimal algebraic degree PH representation does not exist.
		
		\begin{example}
			{\rm	Let
				\begin{align*}
					{\cal G} (z)=\frac{0.8854z^3-0.5948z^2+0.0294z}{(1-0.5z)(1-0.2z)^2}
				\end{align*}
				We want to identify if it has a $3$-order discrete-time PH representation.
				We can convert it into continuous-time case. From Theorem {\rm\ref{thm:d-c}}, let
				\begin{align*}
					{\cal L}_0(s)={\cal G}\Big(\frac{1}{s+1}\Big)=\frac{0.0294s^2-0.536s+0.32}{(s+0.5)(s+0.8)^2},
				\end{align*}
				Then we only need to verify if ${\cal L}_0(s)$ has a $3$-order PH representation. Here ${\cal J}$ is
				\begin{align*}
					\begin{bmatrix}
						-0.5&0 & 0\\
						0&-0.8&0\\
						0&1&-0.8\\
					\end{bmatrix}.
				\end{align*}
				From \eqref{equ_beta}, we can calculate that
				\begin{align*}
					\beta=(13.23,-9.0316,-3.1984).
				\end{align*}
				Now we use the {\bf AM} algorithm to solve the optimization problem {\bf (OP)} given by \eqref{july-15-2}.
				Regardless of what we choose the initial $A_0$, the  {\bf AM} algorithm never produces an optimal zero-value result.
				
				
				Actually, for this specific numerical example, we can also theoretically prove that ${\cal G}(z)$ has no 3-order discrete-time PH representation! \hfill$\Box$
			}
		\end{example}
		The next example shows that sometimes the initial $A_0$ is critical to the efficiency of the {\bf AM} algorithm ({\bf Steps a-c}).
		\begin{example}\label{examp:dis}
			{\rm Consider
				\begin{align*}
					{\cal L}(s)=\frac{0.9421}{s+1}-\frac{0.0942}{(s+1.2)^2}+\frac{0.1884}{s+1.3}-\frac{0.0471}{(s+1.3)^3}.
				\end{align*}
				If it has a 6-order continuous-time PH representation $(\alpha, A)$, the spectrum of $A$ must be $\{-1,-1.2,-1.3\}$, and their multiplicities are
				$1,2,3$ respectively. Then the Jordan form ${\cal J}$ of $A$ is
				\begin{align*}
					\begin{bmatrix}
						-1& & & & &\\
						0& -1.2& & & &\\
						& 1&-1.2& & &\\
						& &0 & -1.3& &\\
						& & & 1& -1.3 &\\
						& & & &1&-1.3
					\end{bmatrix}.
				\end{align*}
				From \eqref{equ_beta} and \eqref{equ:xi} and Theorem \ref{iff}, we can calculate
				\begin{align*}
					\beta=(0.9421,0.1798,-0.0904,-0.0391,0.0080,-0.0004),
				\end{align*}	
				and $\xi=1+2\times 1.2+3\times 1.3=7.3$. Now we use the {\bf AM} algorithm to solve the optimization problem {\bf (OP)} given by (\ref{july-15-2}).	Let $A_0={\cal J}$. The outcome of the {\bf AM} algorithm is
				\begin{align*}
					&P_{\infty} \approx \begin{bmatrix}
						0.5918  &  0.2821 &   0.0559  &  0.0516  &  0.0148 &   0.0038 \\
						-0.0018  &  0.5788 &   0.0001  &  0.3819  &  0.0376  &  0.0035 \\
						-0.0097  &  4.0031 &   0.5785  & -3.6285  &  0.0188  &  0.0378 \\
						-0.0020 &   -0.0037 &  0.0000  &  1.0066  & -0.0008  & 0.0000 \\
						-0.0079  &  -0.0199 &  -0.0021  & 0.0248  &  1.0065  & -0.0013 \\
						-0.0279  &  0.0157&  -0.0221  & 0.0036  & 0.0239  &  1.0069
					\end{bmatrix},
				\end{align*}
				\begin{align*}
					&A_{\infty} \approx \begin{bmatrix}
						-1.0004 &   0.0005 &   0.0188  &  0.0006  &  0.0008  &  0.0008\\
						0.0002  & -1.2005  &  0.0000   & 0.0004   & 0.0004   & 0.0006\\
						0.0024  &  1.0059  & -1.1995   & 0.0026   & 0.0004   & 0.0085\\
						0.0006  &  0.0004  &  0.0000   &-1.2992   & 0.0000   & 0.0000\\
						0.0003  &  0.0003  &  0.0003   & 1.0001   &-1.2995   & 0.0000\\
						0.0005  &  0.0008  &  0.0006   & 0.0009   & 0.9997   &-1.3011\\
					\end{bmatrix},
				\end{align*}
				and ${\cal F}(A_{\infty},P_{\infty}) \approx 6.2781 \times 10^{-10}$. By  Theorem {\rm\ref{iff}},  let $$\alpha=\beta P_{\infty} \approx (0.5581,0.0079,0.0004,0.4061,0.0271,0.0004).$$	
				We obtain a minimal PH representation $(\alpha,A_{\infty})$ of ${\cal L}(s)$.
				
				But if we choose $A_0=-\xi I$, the outcome ${\cal F}_{\infty} \approx 0.2114$. In this case, we can not find a 6-order PH representation. \hfill$\Box$
			}	
		\end{example}
		
		Now we demonstrate the effectiveness and efficiency of the {\bf AM} algorithm through a series of numerical experiments by the following procedure:
			\begin{itemize}
				\item[{\sf (P1)}] Generate an $n$-order continuous-time PH distribution $(\alpha, A)$ randomly and compute its LST
				${\cal L}(s)$;
				\item[{\sf (P2)}] Select $(\alpha, A)$ from {\sf (P1)} such that
				its algebraic degree is also $n$;
				\item[{\sf (P3)}]  Calculate ${\cal J}$ by (\ref{prop-1-1}) in Proposition \ref{prop_Jordan}, $\beta$ by \eqref{equ_beta} from Proposition \ref{prop_beta}, and $\xi$ by $\xi=-\sum_{i=1}^n{\cal J}_{ii}$;
				\item[{\sf (P4)}] Implement the {\bf AM} algorithm to solve the optimization problem {\bf (OP)} in (\ref{july-15-2}) for ${\cal J}$, $\beta$ and $\xi$.
			\end{itemize}
			
			In {\sf (P1)}, the choice of $\alpha=(\alpha_1,\alpha_2,\dots,\alpha_n)$ follows the rule: $\alpha_1$ is sampled from uniform distribution ${\cal U}(0,1)$; $\alpha_i$ is sampled from ${\cal U}(0,1-\sum_{j=1}^{i-1}\alpha_j)$ for $2\leq i\leq n-1$, and $\alpha_n=1-\sum_{j=1}^{n-1}\alpha_j$. For the choice of $A=(a_{ij})_{n\times n}$, the off-diagonal elements of $A$ are sampled from independent uniform distributions ${\cal U}(0,c)$ for some positive number $c$; the diagonal element $a_{ii}=-\sum_{j=1,j\neq i}^n a_{ij}-\theta$, and $\theta$ is also sampled from ${\cal U}(0,c)$. Hence, we randomly generate a continuous-time PH distribution  $(\alpha, A)$ with the algebraic degree $n$. For each $n$ with $3\leq n\leq 7$, we first repeat the above procedure to randomly generate 1000 continuous-time PH distributions $(\alpha, A)$ with the algebraic degree $n$.
			
			We then employ the {\bf AM} algorithm by setting $A_0={\cal J} + {\bf 1}^\top {\bf 1}- I$. The iterations will be terminated at step-$k$ if $|{\cal F}(A_{k+1},P_{k})-{\cal F}(A_{k},P_{k})|\leq 10^{-13}$. If the outcome objective value ${\cal F}(A_{k},P_{k})$ is less than $n^2\times 10^{-10}$, then we consider \eqref{equ_sys} to be solvable and the {\bf AM} algorithm provides an $n$-order PH representation. Here we solve the convex optimization problems  {\bf (OP[$P$])} and {\bf (OP[$A$])} in each step using Matlab's ``quadprog" function. Table \ref{table_am} below summarizes the performance over randomly generated 1000 PH distributions for each $n$.  These distributions are divided into two groups based on whether $\mathcal{L}(s)$ has complex poles: the ``Real Case" for distributions without complex poles, and the ``Complex Case" for those with complex poles. The table shows the number of successful experiments and the average number of iterations for both cases, denoted by ``Real Succ.", ``Real Ave. Iter.", ``Complex Succ.", and ``Complex Ave. Iter." respectively, for brevity. Additionally, the table provides the total number of successful experiments, the average number of iterations, and the average runtime, denoted by ``Total Succ.", ``Ave. Iter." and ``Ave. Runtime".

			\begin{table}[h!]
				\begin{center}
					\caption{Outcome of the {\bf AM} algorithm}\label{table_am}
					\begin{tabular}{|c|c|c|c|c|c|} 
						\hline
						$n$ & 3 & 4 & 5 &  6 & 7  \\
						\hline
						Real Succ. & 812/812 & 566/566 & 350/350& 181/181& 90/90\\
						\hline
						Real Ave. Iter. & 14.04& 22.54 & 39.08& 34.14& 90.66\\
						\hline
						Complex Succ. & 188/188& 434/434& 649/650& 804/819 & 879/910\\
						\hline
						Complex Ave. Iter.& 115.64& 90.25 & 130.57 & 124.85 & 211.89\\
						\hline
						Total Succ. & 1000/1000 & 1000/1000 & 999/1000 & 985/1000 & 969/1000 \\
						\hline
						Ave. Iter. & 33.14& 51.93& 98.52& 108.18& 200.63\\
						\hline
						Ave. Runtime & 0.0317s& 0.191s& 0.178s& 1.357s& 1.984s\\
						\hline
					\end{tabular}
				\end{center}
			\end{table}
			Few observations are made from Table \ref{table_am}: (a) In each experiment, simply keeping $A_0$ fixed at $A_0={\cal J} + {\bf 1}^\top {\bf 1}- I$, the {\bf AM} algorithm can effectively obtain a minimal PH representation in most cases; (b) It can be seen that all failures occur in the ``Complex Case", and the number of iterations in the ``Complex Case" is significantly higher than that in the ``Real Case". Therefore, the {\bf AM} algorithm is more efficient for the ``Real Case" compared to the ``Complex Case"; 
			(c) As $n$ increases, the average number of iterations and the average runtime tend to increase.

			Finally we compare the {\bf AM} algorithm with the {\bf MM}  method proposed by M\'esz\'aros et al. \cite{telek2013}, and Telek and Horv\'ath \cite{telek2007}.  For a given ${\cal L}(s)$ defined in \eqref{prob1}, their {\bf MM} method starts with a matrix representation $(v,H)$ of ${\cal L}(s)$, where $v\in {\cal R}^n$ and $H\in {\cal R}^{n\times n}$. We say $(v,H)$ is a {\it matrix representation} of ${\cal L}(s)$ if $-vH(sI-H)^{-1}{\bf 1}^\top={\cal L}(s)$. Note that the components of $v$ and the off-diagonal elements of $H$ may not be nonnegative, and the diagonal elements of $H$ may not be negative. To address this incorrectness issue for $H$ to be a generator, they introduce different parameterized error functions to penalize the terms with incorrect positive or negative signs.  In each step, they choose an elementary transformation that minimizes the error function. Keeping applying such elementary transformations to the initial matrix representation in each step, their {\bf MM} method may or may not generate a PH representation. To make these two comparable, the initial matrix representation for the {\bf MM} method is also set to $(\beta, {\cal J})$ defined by \eqref{equ_beta}. From \eqref{july-15-2}, this initial setting makes both algorithms to utilize the same information of ${\cal L}(s)$, namely $\beta$ and ${\cal J}$.
			
			To evaluate their performance, we follow the above procedure {\sf (P1)}-{\sf (P4)}. In particular, to generate the following three cases studied by  M\'esz\'aros et al. \cite{telek2013}, and Telek and Horv\'ath \cite{telek2007}, the choices for the off-diagonal elements of $A=(a_{ij})_{n\times n}$ in {\sf (P1)} are modified by  
			\begin{itemize}
				\item[1.] {\it Balanced Case} ({\sf BC}): Its off-diagonal elements are sampled from independent uniform distributions ${\cal U}(0,c)$; 
				\item[2.] {\it Sparse Case} ({\sf SP}):  With probability $p$, its off-diagonal elements are set to zero, and with probability $1-p$, they are sampled from independent uniform distributions ${\cal U}(0,c)$; 
				\item[3.] {\it Stiff Case} ({\sf ST}): With probability $p$, its off-diagonal elements are sampled from independent uniform distributions ${\cal U}(0,1000c)$, and with probability $1-p$, they are sampled from independent uniform distributions ${\cal U}(0,c)$.
			\end{itemize}
			
			For the sparse case, we consider $p=0.5, 0.7, 0.9$, and for the stiff case, consider $p=0.3, 0.5, 0.7$. In sum,  three cases with seven  scenarios are considered. For each scenario, 100 continuous-time PH distributions $(\alpha, A)$ with the algebraic degree $n$ are randomly generated for $3\leq n\leq 6$. For the {\bf AM} algorithm starting with $A_0={\cal J} + {\bf 1}^\top {\bf 1}- I$, the iterations will be terminated at step-$k$ if $|{\cal F}(A_{k+1},P_{k})-{\cal F}(A_{k},P_{k})|\leq 10^{-13}$. If the outcome objective value ${\cal F}(A_{k},P_{k})$ is less than $n^2\times 10^{-10}$, then we consider \eqref{equ_sys} to be solvable and the {\bf AM} algorithm provides an $n$-order PH representation.
			For the {\bf MM} method starting with $A_0={\cal J} + {\bf 1}^\top {\bf 1}- I$, its several modified versions are developed in M\'esz\'aros et al. \cite{telek2013}, and Telek and Horv\'ath \cite{telek2007}. Here we report the best results over all its modified versions.  When executing the {\bf MM} method, the number of iterations should be determined to terminate the computation, and the error functions may be adjusted to improve the performance. To ensure that the two algorithms are compared within the same error range, the tolerance is set to $10^{-5}$. That is, if the error in the incorrect positive or negative signs of the outcome is within the tolerance, we consider that the {\bf MM} method has successfully provided a PH representation. 

			\begin{table}[h!]
				\begin{center}
					\caption{Outcome of the {\bf AM} and {\bf MM}}\label{table_amm}
					\begin{tabular}{|l|c|c|c|c|c|c|c|}
						\hline
						&{\sf BC}&{\sf SP}(0.5)&{\sf SP}(0.7)&{\sf SP}(0.9)&{\sf ST}(0.3)&{\sf ST}(0.5)&{\sf ST}(0.7)\\
						\hline
						$n=3$ ({\bf AM}) & 100 & 99& 99 & 97 & 93&86&74\\
						\hline
						$n=3$ ({\bf MM}) & 100 & 99 & 96 & 93 & 90&79&70\\
						\hline
						$n=3$ (Total) & 100 & 100 & 100 & 99&95 &89&77\\
						\hline
						$n=4$ ({\bf AM}) & 100& 94 & 93 & 86 &97 &56&55\\
						\hline
						$n=4$ ({\bf MM}) & 100 & 96 & 97 & 86 & 95&54&39\\
						\hline
						$n=4$ (Total) & 100 & 98 & 100& 94 &98 &64&56\\
						\hline
						$n=5$ ({\bf AM}) & 100 & 93& 84& 81 &66 &37&32\\
						\hline
						$n=5$ ({\bf MM}) & 100 & 91& 82 & 89 &59 &33&28\\
						\hline
						$n=5$ (Total) & 100 & 97& 90& 92 &70 &41&36\\
						\hline
					\end{tabular}
				\end{center}
			\end{table}
			Table \ref{table_amm} summarizes the number of successful experiments separately obtained by the {\bf AM} algorithm and {\bf MM} method for the randomly generated 100 PH distributions for each $n$ and each scenario. ``Total" represents the number of successes where either the {\bf AM} algorithm or {\bf MM} method can successfully provide an $n$-order PH representation. The table shows that the two algorithms exhibit almost similar performance. In the ``Balanced Case", both algorithms perform well. When the generator becomes sparser or contains elements with big difference, however, both of them don't perform well. 
			
			Actually, the two algorithms operate within two fundamentally different frameworks. In the {\bf AM} algorithm, at each step $k$, the constraints ensure that $\beta P_k$ remains an distribution and $A_k$ is a generator while minimizing the objective function aims to enhance the similarity between $A_k$ and $\mathcal{J}$, thereby aligning the  Laplace transform of $(\beta P_k, A_k)$ as closely as possible with $\mathcal{L}(s)$. In contrast, for the {\bf MM} method, the elementary transformation at step $k$ with output $(\alpha_k, A_k)$  keeps the  similarity between $A_k$ and $\mathcal{J}$, ensuring that the Laplace transform of $(\alpha_k, A_k)$ is $\mathcal{L}(s)$, while the error functions attempt to ensure that $\alpha_k$ is an initial distribution and $A_k$ is a generator.  When both of them yield a solution, the {\bf AM} algorithm guarantees that the outcome $(\beta P_{\infty}, A_{\infty})$ is a phase-type representation with any error reflected in the difference between the Laplace transform of $(\beta P_{\infty}, A_{\infty})$ and $\mathcal{L}(s)$. In contrast, the outcome $(\alpha_{\infty}, A_{\infty})$ produced by the {\bf MM} method implies that its Laplace transform is $\mathcal{L}(s)$, however, due to the presence of a tolerance, $(\alpha_{\infty}, A_{\infty})$ may not be a phase-type representation. Moreover, from the robustness and simplicity perspective, the {\bf AM} algorithm demonstrates more robust and simpler across different problems, as there is no need to properly and dynamically tune the error functions and set the step-length to determine the similarity transformation matrix which are critical to the {\bf MM} method. The following example studied by  Telek and Horv\'ath \cite{telek2007} illustrates this in more concrete way.
		
		\begin{example}\label{examp:robustness} Consider 
			\begin{align*}
				\beta=(0.2,0.3,0.5) \quad \mbox{and} \quad {\cal J}=\begin{bmatrix}
					-1&0&0\\
					0&-3&h\\
					0&-h&-3
				\end{bmatrix}.
			\end{align*}
			Let ${\cal L}(s)=-\beta {\cal J}(sI-{\cal J})^{-1}{\bf 1}^\top.$
			They point out that if $h\leq 0.552748375$, ${\cal L}(s)$ has a 3-order continuous-time PH representation. As $h$ tends to 0.552748375, when using the {\bf MM} algorithm, modifications to the parameters of the error function are necessary to obtain a 3-order PH representation. However, with the {\bf AM} algorithm, we can always choose $A_0={\cal J} + {\bf 1}^\top {\bf 1}- I$ to obtain a 3-order PH representation. For instance, if $h=0.545$, the outcome of the {\bf AM} algorithm is
			\begin{align*}
				P_{\infty}\approx \begin{bmatrix}
					0.3951722181  &  0.3536663051  &  0.2511614767\\
					-0.1127018658 &   0.8684120568 &  0.2442898089\\
					-0.0904477646 &   1.2540523021 &   -0.1636045374
				\end{bmatrix}, \\
				A_{\infty}\approx\begin{bmatrix}
					-1.1627675785 &   0.0001744592 &   1.1623247607\\
					0.1815466311  & -3.0084620976 &  0.0000113592\\
					0.0004485118  &  2.8278906161  & -2.8287865586
				\end{bmatrix},
			\end{align*}
			and ${\cal F}(A_{\infty},P_{\infty})\approx1.5411\times 10^{-10}$. By  Theorem {\rm\ref{iff}},  $(\alpha,A_\infty)$ is a 3-order PH distribution with $\alpha=\beta P_{\infty}\approx(0.0000000016,0.9582830291,0.0417169693)$.
			If $h=0.552$, the outcome of the {\bf AM} algorithm is
			\begin{align*}
				P_{\infty}\approx\begin{bmatrix}
					0.3955359095  &  0.2431603865  &  0.3613037039\\
					-0.1129899070 &   0.2284325823 &  0.8845573247\\
					-0.0904204030 &   -0.1930954789 &   1.2835158819
				\end{bmatrix},
			\end{align*}
			\begin{align*}
				A_{\infty}\approx\begin{bmatrix}
					-1.1630039882&   1.1627976802 &   0.0000665242\\
					0.0005374655  & -2.8914646767  &  2.8906923062\\
					0.1780859324  &  0.0000037036  & -2.9455323248
				\end{bmatrix},
			\end{align*}
			and ${\cal F}(A_{\infty},P_{\infty})\approx2.0198\times 10^{-12}$. By  Theorem {\rm\ref{iff}},  $(\alpha,A_\infty)$ is a 3-order PH distribution with $\alpha=\beta P_{\infty}\approx(0.0000000083,0.0206141126,0.9793858791)$.
		\end{example}
		
		\section{Concluding Remarks}\label{sec:conclude}
		
		In this paper we investigate the minimal representation problem of PH distributions starting with their LST. The if and only if condition is established to verify whether a PH distribution has the same order representation as its algebraic degree. To this end, we develop a unique expression for LTS in terms of the Jordan form. The unique expression helps us equivalently transform the minimal representation problem into the solution existence problem for the set of linear and quadratic equations. The latter problem is further equivalently transformed into  the non-convex optimization problem, and the alternating minimization ({\bf AM}) algorithm is then developed to solve this optimization problem, the proposed algorithm is proved to be convergent for any initial value, and numerically demonstrated to be quite effective. Moreover, the approach we develop for the continuous-time PH distributions can be applied to the discrete-time PH distributions by establishing the equivalence between the LST of continuous-time PH distributions and the generating function of discrete-time distributions.

		The next research topic is to consider the case in which the order of its PH representation is not the same as its algebraic degree.
		The challenge for this case is that from the LST's poles, how to identify the other poles. This is very subtle from the discussions conducted in    Dehon and Latouche \cite{dehon1982}, and  O'Cinneide \cite{o1990characterization}.

		\appendix
		\section{Proof of Proposition \ref{prop_Jordan}}
		This proposition may be hidden in some textbook, for the completeness, we present a proof. 
		\begin{proof} First note that from (\ref{prop-1-10}), 
			\begin{eqnarray}
				&& (sI-J)^{-1}= {\sf diag}\Big(
				(sI-J^{(1)}(\lambda_1))^{-1},\cdots,(sI-J^{(\ell)}(\lambda_{\ell}))^{-1},\label{inverse} \\
				&& \ \ \ \ \ \ \ \ \ \ \ \ \ \ \ \ \ \ \ \ \ \ \ \ \ \ (sI-J^{(\ell+1)}(\mu_{\ell+1},\omega_{\ell+1}))^{-1},
				\cdots,  (sI-J^{(k)}(\mu_{k},\omega_{k}))^{-1}
				\Big). \nonumber
			\end{eqnarray}
				
				We now prove the proposition for $1\leq i\leq \ell$. 	
				Assume $J^{(i)}(\lambda_i)\neq {\cal J}_{n_i}(\lambda_i)$. Suppose
				\begin{align}\label{prop-1-proof-1}
					J^{(i)}(\lambda_i)=\begin{bmatrix}
						{\cal J}_{m_i}(\lambda_i)& \\
						&{\cal J}_{n_i-m_i}(\lambda_i)
					\end{bmatrix} \ \mbox{for some positive integer $m_i$ with $m_i<n_i$}.
					\end{align}
					Then
					\begin{align*}
						\Big(sI-J^{(i)}(\lambda_i)\Big)^{-1}=\begin{bmatrix}
							\Big(sI-{\cal J}_{m_i}(\lambda_i)\Big)^{-1}&\\
							&\Big(sI-{\cal J}_{n_i-m_i}(\lambda_i)\Big)^{-1}
						\end{bmatrix}.
					\end{align*}
					This, by (\ref{inverse-J-1}), implies that the largest degree of $(s-\lambda_i)^{-1}$ among the $n_i^2$ entries of $\big(sI-J^{(i)}(\lambda_i)\big)^{-1}$ is
					$\max\{m_i,n_i-m_i\}$, which is strictly smaller than $n_i$. Then, by (\ref{inverse}),  the largest degree of $(s-\lambda_i)^{-1}$ among the $n^2$ entries of $(sI-J)^{-1}$ is also strictly smaller than $n_i$. In view of \eqref{equ_Jordanform}, the rational function $-\alpha A(sI-A)^{-1}{\bf 1}^\top$ is a linear combination of entries of $(sI-J )^{-1}$. In this case, the multiplicity of pole $\lambda_i$ in rational function $-\alpha A (sI-A)^{-1}{\bf 1}^\top$ is strictly smaller than $n_i$. Thus such $A$ cannot make \eqref{equ_Lapl} hold. Therefore, $J^{(i)}(\lambda_i)$ cannot have two sub-blocks given by (\ref{prop-1-proof-1}). Similarly, we can also prove that $J^{(i)}(\lambda_i)$ cannot have more than two sub-blocks with each sub-block being form ${\cal J}_m(\lambda_i)$. Thus we complete the proof of the first part of the proposition.
					
					For the second part of the proposition, the proof is similar. Assume that $J^{(j)}(\mu_j,\omega_{j})\neq {\cal J}_{n_j}(\mu_j,\omega_{j})$.  Suppose then
					\begin{align}\label{prop-1-proof-2}
						J^{(j)}(\mu_j,\omega_{j})=\begin{bmatrix}
							{\cal J}_{m_j}(\mu_j,\omega_{j})& \\
							&{\cal J}_{n_j-m_j}(\mu_j,\omega_{j})
						\end{bmatrix} \ \ \mbox{for some $m_j$ with $0<m_j<n_j$.}
						\end{align}
						A straightforward calculation yields that
						\begin{align*}
							\Big(sI-J^{(j)}(\mu_j,\omega_{j})\Big)^{-1}=\begin{bmatrix}
								\Big(sI-{\cal J}_{m_j}(\mu_j,\omega_{j})\Big)^{-1}&\\
								&\Big(sI-{\cal J}_{n_j-m_j}(\mu_j,\omega_{j})\Big)^{-1}
							\end{bmatrix}.
						\end{align*}
						Then by (\ref{inverse-J-2}), the largest degree of $\Big((s-\mu_j)^2+\omega_j^2\Big)^{-1}$ in $(sI-J)^{-1}$ is
						$\max\{m_j,n_j-m_j\}$, which is strictly smaller than $n_j$. It follows that $\mu_j\pm \imath \omega_{j}$ could not be $n_j$ multiple poles of ${\cal L}(s)$.
						Thus \eqref{equ_Lapl} cannot be true. This leads to a contradiction. Therefore, $J^{(j)}(\mu_j,\omega_j)$ cannot have two sub-blocks given by (\ref{prop-1-proof-2}). Similarly, we can also prove that $J^{(j)}(\mu_j,\omega_j)$ cannot have more than two sub-blocks with each sub-block being form ${\cal J}_m(\mu_j,\omega_j)$. Thus we complete the proof of the second part of the proposition, and the whole proof of the proposition is then completed.
					\end{proof}
					
					\section{Proof of Proposition \ref{prop_P}}
					Telek and Horv\'ath \cite{telek2007} prove a similar result for $(-A)^{-1}$, see their Theorem 5. Here we adopt their idea to prove the proposition about $A$ itself.
					\begin{proof}  For each real pole $\lambda_i$, define a real $(n_i\times n_i)$-matrix
						\begin{eqnarray}
							F_{n_i}(\lambda_i)=\begin{bmatrix}
								x_{i1}& & &  \\
								x_{i2}& x_{i1}& &  \\
								\vdots&\ddots & \ddots& \\
								x_{in_i}&\cdots  & x_{i2}&x_{i1}
							\end{bmatrix}.
							\label{prop-2-5}
						\end{eqnarray} 
						and for each pair of conjugate complex poles $\mu_{j}\pm \imath\omega_{j}$, define a real $((2n_j)\times(2n_j))$-matrix
						\begin{eqnarray}
							F_{n_j}(\mu_j,\omega_j)=\begin{bmatrix}
								D_{j1}& & &  \\
								D_{j2}& D_{j1}& &  \\
								\vdots&\ddots & \ddots& \\
								D_{jn_j}&\cdots  & D_{j2}&D_{j1}
							\end{bmatrix}  \mbox{with} \ D_{jl}=\begin{bmatrix}
								u_{jl}&v_{jl}\\
								-v_{jl}& u_{jl}
							\end{bmatrix},  \ 1\leq l\leq n_j.
							\label{prop-2-6}
						\end{eqnarray}
						It is straightforward to verify that
						\begin{eqnarray*}
							&& {\cal J}_{n_i}(\lambda_i) \cdot F_{n_i}(\lambda_i)=F_{n_i}(\lambda_i)\cdot  {\cal J}_{n_i}(\lambda_i) \ \
							\mbox{for $1\leq i\leq \ell$};\label{prop-2-1}\\
							&& {\cal J}_{n_j}(\mu_j,\omega_j)\cdot F_{n_j}(\mu_j,\omega_j) = F_{n_j}(\mu_j,\omega_j)\cdot  {\cal J}_{n_j}(\mu_j,\omega_j)
							\ \ \mbox{for $\ell+1\leq j\leq k$}.\label{prop-2-2}
						\end{eqnarray*}
						Let
						\begin{eqnarray}
							F={\sf diag}\Big(F_{n_1}(\lambda_1),\cdots,F_{n_{\ell}}(\lambda_\ell),F_{n_{\ell+1}}(\mu_{\ell+1},\omega_{\ell+1}),\cdots,F_{n_k}(\mu_k,\omega_k)\Big).
							\label{prop-2-3}
						\end{eqnarray}
						Hence, for ${\cal J}$ given in (\ref{prop-1-1}), we have
						\begin{eqnarray}
							&& {\cal J}\cdot F=  F\cdot {\cal J}.\label{prop-2-1}
						\end{eqnarray}
						For any nonsingular $\widetilde{P}$ such that $A=\widetilde{P}^{-1}{\cal J}\widetilde{P}$, if we can properly choose
						$F_{n_i}(\lambda_i)$ and $F_{n_j}(\mu_j,\omega_j)$ such that
						\begin{eqnarray}
							\mbox{matrix $F$ defined by (\ref{prop-2-3}) is nonsingular and} \ \  F\widetilde{P}{\bf 1}^\top={\bf 1}^\top.
							\label{prop-2-2}
						\end{eqnarray}
						Thus by (\ref{prop-2-1}), we have
						\[
						A=\widetilde{P}^{-1}{\cal J}\widetilde{P}=\widetilde{P}^{-1}F^{-1}F{\cal J}\widetilde{P}=\widetilde{P}^{-1}F^{-1}{\cal J}F\widetilde{P}
						=(F\widetilde{P})^{-1}{\cal J}(F\widetilde{P}).
						\]
						Let $P=F\widetilde{P}$ and then we complete the proof. So it remains to construct
						$F_{n_i}(\lambda_i)$ $(1\leq i\leq \ell)$, and $F_{n_j}(\mu_j,\omega_j)$
						$(\ell+1\leq j\leq k)$
						such that \eqref{prop-2-2} holds.
						
						If $\widetilde{P}{\bf 1}^\top = {\bf 1}^\top$, let $F=I$. Otherwise, taking $\widetilde{P}$ into ${\cal J}$-blocked  partition,
						\begin{align*}
							\widetilde{P}=\left[\begin{array}{ccc}
								\widetilde{P}(\lambda_1)\\
								\vdots\\
								\widetilde{P}(\lambda_{\ell})\\
								\widetilde{P}(\mu_{\ell+1},\omega_{\ell+1})\\
								\vdots\\
								\widetilde{P}(\mu_k,\omega_k)
							\end{array}\right].
							\ \ \mbox{Then}\ \
							F\widetilde{P}=\left[
							\begin{array}
								{ccc}
								F_{n_1}(\lambda_1)\widetilde{P}(\lambda_1)\\
								\vdots  \\
								F_{n_\ell}(\lambda_{\ell})\widetilde{P}(\lambda_\ell)\\
								F_{n_{\ell+1}}(\mu_{\ell+1},\omega_{\ell+1})\widetilde{P}(\mu_{\ell+1},\omega_{\ell+1})\\
								\vdots \\
								F_{n_k}(\mu_k,\omega_k)\widetilde{P}(\mu_k,\omega_k)
							\end{array}
							\right].
						\end{align*} 
						
						We first construct $F_{n_i}(\lambda_i)$ such that
						\begin{eqnarray}
							F_{n_i}(\lambda_i)\widetilde{P}(\lambda_i){\bf 1}^\top={\bf 1}^\top \ \mbox{ for } \ 1\leq i \leq \ell.
							\label{prop-2-4}
						\end{eqnarray}
						Note that in (\ref{prop-2-4}), the dimension of ${\bf 1}^\top$ on the left-hand side is $n$ while on the right hand side is $n_i$.
						With some notation abuse, let $\Big(\widetilde{P}(\lambda_i){\bf 1}^\top\Big)_r$ represent its $r$-th component with $1\leq r\leq n_i$. Note that, by the definition $F_{n_i}(\lambda_i)$ given by (\ref{prop-2-5}),
						\begin{small}
							\begin{align*}
								F_{n_i}(\lambda_i)\widetilde{P}(\lambda_i){\bf 1}^\top=
								\left(\begin{array}{llcl}
									x_{i1}\Big(\widetilde{P}(\lambda_i){\bf 1}^\top\Big)_1\\
									x_{i2}\Big(\widetilde{P}(\lambda_i){\bf 1}^\top\Big)_1+x_{i1}\Big(\widetilde{P}(\lambda_i){\bf 1}^\top\Big)_2\\
									\vdots\\
									x_{in_i}\Big(\widetilde{P}(\lambda_i){\bf 1}^\top\Big)_1+x_{i,n_i-1}\Big(\widetilde{P}(\lambda_i){\bf 1}^\top\Big)_2+\cdots+x_{i1}\Big(\widetilde{P}(\lambda_i){\bf 1}^\top\Big)_{n_i}
								\end{array}\right).
							\end{align*}
						\end{small}
						Applying $A=\widetilde{P}^{-1}{\cal J}\widetilde{P}$, from \eqref{equ_Lapl},
						\begin{align*}
							{\cal L}(s)&=\frac{p(s)}{q(s)}=-\alpha A(sI-A)^{-1}{\bf 1}^\top=-\alpha \widetilde{P}^{-1} {\cal J}(sI-{\cal J})^{-1}\widetilde{P}{\bf 1}^\top.
						\end{align*}
						Specifically, by Proposition \ref{prop_Jordan},
						\begin{small}
							\begin{align*}
								(sI-{\cal J})^{-1}\widetilde{P}{\bf 1}^\top=
								\left(\begin{array}{ccc}
									\Big(sI- {\cal J}_{n_1}(\lambda_1)\Big)^{-1}\widetilde{P}(\lambda_1){\bf 1}^\top\\
									\vdots\\
									\Big(sI- {\cal J}_{n_\ell}(\lambda_{\ell})\Big)^{-1}\widetilde{P}(\lambda_{\ell}){\bf 1}^\top\\
									\Big(sI- {\cal J}_{n_{\ell+1}}(\mu_{\ell+1},\omega_{\ell+1})\Big)^{-1}\widetilde{P}(\mu_{\ell+1},\omega_{\ell+1}){\bf 1}^\top\\
									\vdots\\	
									\Big(sI- {\cal J}_{n_k}(\mu_k,\omega_k)\Big)^{-1}\widetilde{P}(\mu_k,\omega_k){\bf 1}^\top
								\end{array}\right).
							\end{align*}
						\end{small}
						Recall the expression of $\Big(sI- {\cal J}_{n_i}(\lambda_i)\Big)^{-1}$, $(s-\lambda_i)^{-n_i}$ only appears in its first column. This implies that $\Big(\widetilde{P}(\lambda_i){\bf 1}^\top\Big)_1$ is not zero. Otherwise, $\lambda_i$ could not be $n_i$ multiple pole of rational function $-\alpha A(sI-A)^{-1}{\bf 1}^\top$. With the help of this observation, we can uniquely determine $F_{n_i}(\lambda_i)$ such that $F_{n_i}(\lambda_i)\widetilde{P}(\lambda_i){\bf 1}^\top={\bf 1}^\top$ in the following way:
						\begin{align*}
							&x_{i1}=1\bigg/\Big(\widetilde{P}(\lambda_i){\bf 1}^\top\Big)_1;\\
							&x_{ir}=\left[1-\sum_{l=1}^{r-1}x_{il}\Big(\widetilde{P}(\lambda_i){\bf 1}^\top\Big)_{r+1-l}\right] \bigg/ \Big(\widetilde{P}(\lambda_i){\bf 1}^\top\Big)_1, \quad 2\leq r \leq n_i.
						\end{align*}
						Note that $x_{i1}$ is not zero. We know $F_{n_i}(\lambda_i)$ is nonsingular and satisfies (\ref{prop-2-4}).
						
						Using the same method, we now construct $F_{n_j}(\mu_j,\omega_j)$ such that
						\begin{align}\label{equ:cec}
							F_{n_j}(\mu_j,\omega_j)\widetilde{P}(\mu_j,\omega_j){\bf 1}^\top={\bf 1}^\top, \ \ell+1\leq j \leq k.
						\end{align}
						Here the dimension of ${\bf 1}^\top$ on the left hand side is $n$ and that on the right hand side is $2n_j$.
						From the structure feature of ${\cal J}$-blocked partition of $\widetilde P$, the left-hand side of (\ref{equ:cec}) is
						\begin{small}
							\begin{align}\label{equ_complex}
								\left(\begin{array}{llll}
									D_{j1}\widetilde{P}^{(1)}(\mu_j,\omega_j){\bf 1}^\top\\
									D_{j2}\widetilde{P}^{(1)}(\mu_j,\omega_j){\bf 1}^\top+D_{j1}\widetilde{P}^{(2)}(\mu_j,\omega_j){\bf 1}^\top\\
									\vdots\\
									D_{jn_j}\widetilde{P}^{(1)}(\mu_j,\omega_j){\bf 1}^\top+D_{j,n_j-1}\widetilde{P}^{(2)}(\mu_j,\omega_j){\bf 1}^\top+\cdots+D_{j1}\widetilde{P}^{(n_j)}(\mu_j,\omega_j){\bf 1}^\top
								\end{array}\right)
							\end{align}
						\end{small}
						and
						\begin{small}
							\begin{align}
								D_{jr}\widetilde{P}^{(1)}(\mu_j,\omega_j){\bf 1}^\top&=
								\left(\begin{array}{cc}
									u_{jr}\big(\widetilde{P}^{(1)}(\mu_j,\omega_j){\bf 1}^\top\big)_1+v_{jr}\big(\widetilde{P}^{(1)}(\mu_j,\omega_j){\bf 1}^\top\big)_2\\
									-v_{jr}\cdot \big(\widetilde{P}^{(1)}(\mu_j,\omega_j){\bf 1}^\top\big)_1+u_{jr}\cdot\big(\widetilde{P}^{(1)}(\mu_j,\omega_j){\bf 1}^\top\big)_2
								\end{array}\right)\nonumber\\
								&=\begin{bmatrix}
									\big(\widetilde{P}^{(1)}(\mu_j,\omega_j){\bf 1}^\top\big)_1&\big(\widetilde{P}^{(1)}(\mu_j,\omega_j){\bf 1}^\top\big)_2\\
									\big(\widetilde{P}^{(1)}(\mu_j,\omega_j){\bf 1}^\top\big)_2&-\big(\widetilde{P}^{(1)}(\mu_j,\omega_j){\bf 1}^\top\big)_1
								\end{bmatrix}\cdot
								\left(\begin{array}{cc}
									u_{jr}\\
									v_{jr}
								\end{array}\right).\label{prop-2-71}
							\end{align}
						\end{small}
						Let $$C_j=\begin{bmatrix}
							\big(\widetilde{P}^{(1)}(\mu_j,\omega_j){\bf 1}^\top\big)_1&\big(\widetilde{P}^{(1)}(\mu_j,\omega_j){\bf 1}^\top\big)_2\\
							\big(\widetilde{P}^{(1)}(\mu_j,\omega_j){\bf 1}^\top\big)_2&-\big(\widetilde{P}^{(1)}(\mu_j,\omega_j){\bf 1}^\top\big)_1
						\end{bmatrix}.$$
						
						Similarly, since $\mu_j\pm \imath \omega_{j}$ are $n_j$-multiple poles of rational function $-\alpha A(sI-A)^{-1}{\bf 1}^\top$, and from(\ref{inverse-J-2}),  $\Big((s-\mu_j)^2+\omega_j^2\Big)^{-n_j}$ only appears in the first two columns {of $\Big(sI- {\cal J}_{n_j}(\mu_j,\omega_j)\Big)^{-1}$}. We must have  $\Big(\big(\widetilde{P}^{(1)}(\mu_j,\omega_j){\bf 1}^\top\big)_1\Big)^2+\Big(\big(\widetilde{P}^{(1)}(\mu_j,\omega_j){\bf 1}^\top\big)_2\Big)^2 \neq 0$. This gives that $C_j$ is nonsingular. Then from \eqref{equ_complex}-\eqref{prop-2-71}, we can uniquely construct $F_{n_j}(\mu_j,\omega_j)$ such that \eqref{equ:cec} holds by the following way:
						\begin{align*}
							&\left(\begin{array}{cc}
								u_{j1}\\
								v_{j1}
							\end{array}\right)=C^{-1}_j{\bf 1}^\top;\\
							&\left(\begin{array}{cc}
								u_{jr}\\
								v_{jr}
							\end{array}\right)=C^{-1}_j\left[{\bf 1}^\top-\sum_{l=1}^{r-1}D_{jl}\widetilde{P}^{(r+1-l)}(\mu_j,\omega_j){\bf 1}^\top \right], \quad 2\leq r\leq n_j.
						\end{align*}
						Note that $(u_{j1})^2+(v_{j1})^2\neq 0$. Then $F_{n_j}(\mu_j,\omega_j)$ is nonsingular and (\ref{equ:cec}) holds.
						Thus the proposition is proved. 	
					\end{proof}
					
					\section{Proof of Lemma \ref{obj-convexity}}
					
					\begin{proof}
						These two convexities are straightforward. Now prove \eqref{conv_pr}.
						Note that
						\begin{align*}
							P(\lambda_i)A- {\cal J}_{n_i}(\lambda_i)P(\lambda_i)
							&=\left[\begin{array}{cccc}
								P_1(\lambda_i) \\
								P_2(\lambda_i) \\
								\vdots\\
								P_{n_i}(\lambda_i)
							\end{array}
							\right]\cdot A-\left[\begin{array}{cccc}
								\lambda_iP_1(\lambda_i) \\
								\lambda_iP_2(\lambda_i) +P_1(\lambda_i) \\
								\vdots\\
								\lambda_iP_{n_i}(\lambda_i) +P_{n_i-1}(\lambda_i)
							\end{array}
							\right]\\
							& =\left[\begin{array}{cccc}
								P_1(\lambda_i)\cdot(A-\lambda_iI)\\
								P_2(\lambda_i)\cdot(A-\lambda_iI)-P_1(\lambda_i)\\
								\vdots\\
								P_{n_i}(\lambda_i)\cdot(A-\lambda_iI)-P_{n_i-1}(\lambda_i)
							\end{array}
							\right]
						\end{align*}
						and
						\begin{align*}
							\overline{P}(\lambda_i)B_{\lambda_i}&=\Big(P_1(\lambda_i),\ldots,P_{n_i}(\lambda_i)\Big)\cdot\begin{bmatrix}
								A-\lambda_iI&-I & &\\
								& A-\lambda_iI &\ddots &\\
								& & \ddots & -I\\
								& & &A-\lambda_iI
							\end{bmatrix}_{n_i\times n_i}\\
							&=\Big(P_1(\lambda_i)\cdot(A-\lambda_iI),\ldots, P_{n_i}(\lambda_i)\cdot(A-\lambda_iI)-P_{n_i-1}(\lambda_i)\Big).
						\end{align*}
						Then
						\begin{align*}
							&\Big\|P(\lambda_i)A- {\cal J}_{n_i}(\lambda_i)P(\lambda_i) \Big\|^2_F\\
							&=P_1(\lambda_i)\cdot(A-\lambda_iI)\cdot(A^\top-\lambda_iI)\cdot P_1^\top(\lambda_i)\\
							& \quad +\sum_{r=2}^{n_i}\Big(P_r(\lambda_i)\cdot(A-\lambda_iI)-P_{r-1}(\lambda_i)\Big)\cdot\Big(P_r(\lambda_i)
							\cdot(A-\lambda_iI)-P_{r-1}(\lambda_i)\Big)^\top\\
							&=\overline{P}(\lambda_i)B_{\lambda_i}B^\top_{\lambda_i}\overline{P}^\top(\lambda_i).
						\end{align*}
						This gives \eqref{conv_pr}. For the second part, similarly,
						\begin{align*}
							&P(\mu_{j},\omega_j)A- {\cal J}_{n_j}(\mu_{j},\omega_j)P(\mu_{j},\omega_j)\\
							&=\left[\begin{array}{cccc}
								P^{(1)}_1(\mu_{j},\omega_j)\\
								P^{(1)}_2(\mu_{j},\omega_j)\\
								P^{(2)}_1(\mu_{j},\omega_j)\\
								P^{(2)}_2(\mu_{j},\omega_j)\\
								\vdots\\
								P^{(n_j)}_1(\mu_{j},\omega_j)\\
								P^{(n_j)}_2(\mu_{j},\omega_j)
							\end{array}
							\right]\cdot A
							-\left[\begin{array}{cccc}
								\mu_jP^{(1)}_1(\mu_{j},\omega_j)-\omega_jP^{(1)}_2(\mu_{j},\omega_j)\\
								\mu_jP^{(1)}_2(\mu_{j},\omega_j)+\omega_jP^{(1)}_1(\mu_{j},\omega_j)\\
								\mu_jP^{(2)}_1(\mu_{j},\omega_j)-\omega_jP^{(2)}_2(\mu_{j},\omega_j)+P^{(1)}_1(\mu_{j},\omega_j)\\
								\mu_jP^{(2)}_2(\mu_{j},\omega_j)+\omega_jP^{(2)}_1(\mu_{j},\omega_j)+P^{(1)}_2(\mu_{j},\omega_j)\\
								\vdots\\
								\mu_jP^{(n_j)}_1(\mu_{j},\omega_j)-\omega_jP^{(n_j)}_2(\mu_{j},\omega_j)+P^{(n_j-1)}_1(\mu_{j},\omega_j)\\
								\mu_jP^{(n_j)}_2(\mu_{j},\omega_j)+\omega_jP^{(n_j)}_1(\mu_{j},\omega_j)+P^{(n_j-1)}_2(\mu_{j},\omega_j)
							\end{array}
							\right]\\
							&=\left[\begin{array}{cccc}
								P^{(1)}_1(\mu_{j},\omega_j)\cdot(A-\mu_jI)+\omega_jP^{(1)}_2(\mu_{j},\omega_j)\\
								P^{(1)}_2(\mu_{j},\omega_j)\cdot(A-\mu_jI)-\omega_jP^{(1)}_1(\mu_{j},\omega_j)\\
								P^{(2)}_1(\mu_{j},\omega_j)\cdot(A-\mu_jI)+\omega_jP^{(2)}_2(\mu_{j},\omega_j)-P^{(1)}_1(\mu_{j},\omega_j)\\
								P^{(2)}_2(\mu_{j},\omega_j)\cdot(A-\mu_jI)-\omega_jP^{(2)}_1(\mu_{j},\omega_j)-P^{(1)}_2(\mu_{j},\omega_j)\\
								\vdots\\
								P^{(n_j)}_1(\mu_{j},\omega_j)\cdot(A-\mu_jI)+\omega_jP^{(n_j)}_2(\mu_{j},\omega_j)-P^{(n_j-1)}_1(\mu_{j},\omega_j)\\
								P^{(n_j)}_2(\mu_{j},\omega_j)\cdot(A-\mu_jI)-\omega_jP^{(n_j)}_1(\mu_{j},\omega_j)-P^{(n_j-1)}_2(\mu_{j},\omega_j)
							\end{array}
							\right]
						\end{align*}
						and
						\begin{align*}
							\overline{P}(\mu_{j},\omega_j)B_{(\mu_{j},\omega_j)}&=\Big(P^{(1)}_1(\mu_{j},\omega_j),P^{(1)}_2(\mu_{j},\omega_j),\ldots,  P^{(n_j)}_1(\mu_{j},\omega_j),P^{(n_j)}_2(\mu_{j},\omega_j)\Big)\\
							& \quad \times
							\begin{bmatrix}
								A_{(\mu_j,\omega_j)}&-I & &\\
								& A_{(\mu_j,\omega_j)} &\ddots &\\
								& & \ddots & -I\\
								& & &A_{(\mu_j,\omega_j)}
							\end{bmatrix}.
						\end{align*}
						Then \begin{align*}
							&\Big\|P(\mu_{j},\omega_j)A- {\cal J}_{n_j}(\mu_{j},\omega_j)P(\mu_{j},\omega_j) \Big\|^2_F\\
							&=\Big(P^{(1)}_1(\mu_{j},\omega_j) \cdot(A-\mu_jI)+\omega_jP^{(1)}_2(\mu_{j},\omega_j) \Big)\\
							&\quad\quad\quad\times
							\Big(P^{(1)}_1(\mu_{j},\omega_j) \cdot(A-\mu_jI)+\omega_jP^{(1)}_2(\mu_{j},\omega_j) \Big)^\top\\
							&\quad +\Big(P^{(1)}_2(\mu_{j},\omega_j) \cdot(A-\mu_jI)-\omega_jP^{(1)}_1(\mu_{j},\omega_j) \Big)\\
							&\quad\quad\quad\times
							\Big(P^{(1)}_2(\mu_{j},\omega_j) \cdot(A-\mu_jI)-\omega_jP^{(1)}_1(\mu_{j},\omega_j) \Big)^\top\\
							&\quad +\sum_{r=2}^{n_j}\Big(P^{(r)}_1(\mu_{j},\omega_j) \cdot(A-\mu_jI)+\omega_jP^{(r)}_2(\mu_{j},\omega_j)_{(2)}-P^{(r-1)}_1(\mu_{j},\omega_j) \Big)\\
							&\quad \quad \times\Big(P^{(r)}_1(\mu_{j},\omega_j) \cdot(A-\mu_jI)+\omega_jP^{(r)}_2(\mu_{j},\omega_j) -P^{(r-1)}_1(\mu_{j},\omega_j) \Big)^\top\\
							&\quad +\sum_{r=2}^{n_j}\Big(P^{(r)}_2(\mu_{j},\omega_j) \cdot(A-\mu_jI)-\omega_jP^{(r)}_1(\mu_{j},\omega_j) -P^{(r-1)}_2(\mu_{j},\omega_j) \Big)\\
							&\quad \quad \times\Big(P^{(r)}_2(\mu_{j},\omega_j) \cdot(A-\mu_jI)-\omega_jP^{(r)}_1(\mu_{j},\omega_j) -P^{(r-1)}_2(\mu_{j},\omega_j) \Big)^\top\\
							&=\overline{P}(\mu_{j},\omega_j)B_{(\mu_{j},\omega_j)}B_{(\mu_{j},\omega_j)}^\top\overline{P}(\mu_{j},\omega_j)^\top.
						\end{align*}	
						Hence, we have the lemma.
					\end{proof}
					
					\
					
					\section{Proof of Lemma \ref{Alter-optimal-solution}}
					
					
					\begin{proof}
						We first prove the first part. By \eqref{equ_ob}-\eqref{conv_pc},
						\begin{align}\label{obb}
							\begin{aligned}
								\Big\|PA-{\cal J}P \Big\|^2_F=&\Big(\overline{P}(\lambda_1),\ldots,\overline{P}(\lambda_{\ell}),\overline{P}(\mu_{\ell+1},\omega_{\ell+1}),\ldots,\overline{P}(\mu_{k},\omega_k)\Big) B\cdot B^\top\\ &\quad \times \Big(\overline{P}(\lambda_1),\ldots,\overline{P}(\lambda_{\ell}),\overline{P}(\mu_{\ell+1},\omega_{\ell+1}),\ldots,\overline{P}(\mu_{k},\omega_k)\Big)^{\top}
							\end{aligned}
						\end{align}
						with
						$
						B={\sf diag}\Big(
						B_{\lambda_1}, \cdots,  B_{\lambda_{\ell}},  B_{(\mu_{\ell+1},\omega_{\ell+1})},\cdots,
						B_{(\mu_{k},\omega_k)}\Big).
						$
						Note that $B\cdot B^\top$ is positive semi-definite. Then we use Corollary 2.1 of \cite{lee2005quadratic} to show that (OP[$A$]) given by (\ref{july-15-4}) has an optimal solution. To transform it into the problem addressed in Corollary 2.1 of \cite{lee2005quadratic}, take $D$, $c$, $b$ and $A$ in Problem (2.1) on p.29 studied by Corollary 2.1 of \cite{lee2005quadratic} to be
						\begin{eqnarray*}
							D=B\cdot B^\top; \ c={\bf 0}; \  b=\left(
							\begin{array}{ccc}
								{\bf 1}^\top\\
								-{\bf 1}^\top\\
								{\bf 0}^\top
							\end{array}
							\right); \ A=\begin{bmatrix}
								E\\
								-E\\
								I[\beta]
							\end{bmatrix}
						\end{eqnarray*}
						with
						\[
						E=\begin{bmatrix}
							{\bf 1} & & &\\
							&{\bf 1}& &\\
							&  & \ddots &\\
							& &  &{\bf 1}
						\end{bmatrix}\in {\cal R}^{n\times n^2}_+
						\ \mbox{and} \ \ I[\beta]=\begin{bmatrix}\beta_1I,\cdots,\beta_nI
						\end{bmatrix}\in {\cal R}^{n\times n^2}.
						\]
						Since the constraint set of {\bf (OP[$A$])} is nonempty by assumption, from  Corollary 2.1 of \cite{lee2005quadratic}, it suffices to prove that if $V \in {\cal R}^{n\times n}$ satisfies $V {\bf 1}^\top ={\bf 0}^\top$, $\beta V\geq {\bf 0}$ and
						\begin{align}\label{1212-20}
							\begin{aligned}
								&\Big(\overline{V}(\lambda_1),\ldots,\overline{V}(\lambda_{\ell}),\overline{V}(\mu_{\ell+1},\omega_{\ell+1}),\ldots,\overline{V}(\mu_{k},\omega_k)\Big) B\cdot B^\top\\ &\quad \times \Big(\overline{V}(\lambda_1),\ldots,\overline{V}(\lambda_{\ell}),\overline{V}(\mu_{\ell+1},\omega_{\ell+1}),\ldots,\overline{V}(\mu_{k},\omega_k)\Big)^{\top}=0,
							\end{aligned}
						\end{align}
						then for $P$ satisfying $P {\bf 1}^\top ={\bf 1}^\top$ and $\beta P\geq {\bf 0}$, we have
						\begin{align}\label{1212-21}
							\begin{aligned}
								&\Big(\overline{P}(\lambda_1),\ldots,\overline{P}(\lambda_{\ell}),\overline{P}(\mu_{\ell+1},\omega_{\ell+1}),\ldots,\overline{P}(\mu_{k},\omega_k)\Big) B\cdot B^\top\\ &\quad \times \Big(\overline{V}(\lambda_1),\ldots,\overline{V}(\lambda_{\ell}),\overline{V}(\mu_{\ell+1},\omega_{\ell+1}),\ldots,\overline{V}(\mu_{k},\omega_k)\Big)^{\top}\geq0.
							\end{aligned}
						\end{align}
						Here we make ${\cal J}$-blocked  partition for $V$, and similar to (\ref{pbar1})-(\ref{pbar2}),
						\begin{align*}
							&\overline{V}(\lambda_i)=\Big(V_1(\lambda_i),V_2(\lambda_i),\ldots,V_{n_i}(\lambda_i)\Big);\\	&\overline{V}(\mu_{j},\omega_j)=\Big(V^{(1)}_1(\mu_{j},\omega_j),V^{(1)}_2(\mu_{j},\omega_j),\ldots,V^{(n_j)}_1(\mu_{j},\omega_j),
							V^{(n_j)}_2(\mu_{j},\omega_j)\Big).
						\end{align*}
						It follows from \eqref{1212-20} that
						\begin{align*}
							B^\top\cdot \Big(\overline{V}(\lambda_1),\ldots,\overline{V}(\lambda_{\ell}),\overline{V}(\mu_{\ell+1},
							\omega_{\ell+1}),\ldots,\overline{V}(\mu_{k},\omega_k)\Big)^{\top}={\bf 0}.
						\end{align*}
						Then (\ref{1212-21}) holds, and the first part of the lemma is proved.
						
						For the second part, the conclusion is immediate because the constraint set  is compact and the objective function,
						$\|PA-{\cal J}P \|^2_F$, is continuous with respect to $n^2$ variables given by the entries of $A$ for any fixed $P$.
					\end{proof}

				\end{document}